\theoremstyle{plain}
\newtheorem{thm}{Theorem}[section]
\newtheorem{cor}[thm]{Corollary}
\newtheorem{lem}[thm]{Lemma}
\newtheorem{prop}[thm]{Proposition}
\newcommand{\norm}[1]{\left\Vert#1\right\Vert}
\theoremstyle{definition}
\newtheorem{defn}[thm]{Definition}
\newtheorem{exa}[thm]{Example}
\newtheorem{rem}[thm]{Remark}
\numberwithin{equation}{section}
\def\Lo{\smash{L_0^{1, \, 2}}}
\def\Xint#1{\mathchoice
   {\XXint\displaystyle\textstyle{#1}}%
   {\XXint\textstyle\scriptstyle{#1}}%
   {\XXint\scriptstyle\scriptscriptstyle{#1}}%
   {\XXint\scriptscriptstyle\scriptscriptstyle{#1}}%
   \!\int}
\def\XXint#1#2#3{{\setbox0=\hbox{$#1{#2#3}{\int}$}
     \vcenter{\hbox{$#2#3$}}\kern-.5\wd0}}
\def\dashint{\Xint-}
\begin{document}

\title[Existence and regularity of positive solutions]
{Existence and regularity of positive solutions to elliptic equations of Schr\"{o}dinger type}

\author[B.~J. Jaye]
{B.~J. Jaye}
\address{Department of Mathematics,
University of Missouri,
Columbia, MO 65211, USA}
\curraddr{Department of Mathematics,
Kent State University,
Kent, OH 44240, USA}
\email{bjaye@kent.edu}

\author[V.~G. Maz'ya]
{V.~G. Maz'ya}

\address{Department of Mathematical Sciences,
University of Liverpool,
Liverpool, L69 3BX, UK, and Department of Mathematics,
Link\"oping University, SE-581 83, Link\"oping,
Sweden}
\email{vlmaz@mai.liu.se}

\author[I.~E. Verbitsky]
{I.~E. Verbitsky}
\address{Department of Mathematics,
University of Missouri,
Columbia, MO 65211, USA}
\email{verbitskyi@missouri.edu}

\date{\today}

\thanks{The first and third authors are supported in part
by NSF grant  DMS-0901550.}

\keywords{Schr\"{o}dinger equation, positive solutions, form boundedness, elliptic regularity, weak reverse H\"older inequality}

\begin{abstract}  We prove the existence of positive solutions with optimal local regularity to the homogeneous equation of Schr\"{o}dinger type,
$$-\text{div} (\mathcal{A}\nabla u) -\sigma u = 0 \quad \text{ in }\Omega,
$$
under only a form boundedness assumption on $\sigma \in D'(\Omega)$ and
ellipticity assumption on $\mathcal{A}\in L^\infty(\Omega)^{n\times n}$, for an arbitrary open set $\Omega\subseteq \mathbf{R}^n$.

We demonstrate  that there is a two way correspondence between the form boundedness  and the existence of positive  solutions to this equation, as well as weak
solutions to the equation with quadratic nonlinearity in the gradient,
$$-\text{div}(\mathcal{A}\nabla v )= (\mathcal{A}\nabla v)\cdot\nabla v + \sigma  \quad \text{ in }\Omega.
$$
 As a consequence, we obtain necessary
and sufficient conditions for both the form-boundedness (with a sharp upper
form bound)
and  the positivity
of the quadratic form of the Schr\"{o}dinger type operator
$\mathcal{H}=-\text{div} (\mathcal{A}\nabla \cdot ) - \sigma$ with arbitrary distributional
potential  $\sigma \in D'(\Omega)$,
and give examples clarifying the relationship between these two properties.  \end{abstract}

\maketitle

\section{Introduction}

\subsection{} The goal of this paper is to present an existence and regularity theory for \textit{positive} solutions to the equation of Schr\"{o}dinger type:
\begin{equation}\label{schrointro}
-\text{div}(\mathcal{A}\nabla u) - \sigma u=0\text{ in }\Omega,
\end{equation}
on an arbitrary open set $\Omega\subseteq \mathbf{R}^n$, $n \ge 1$,
under the standard ellipticity assumptions on $\mathcal{A}\in L^\infty(\Omega)^{n\times n}$, and the sole condition of form boundedness on the real-valued distributional potential $\sigma \in  D'(\Omega)$:
\begin{equation}\label{imbintro}
|\langle \sigma, h^2 \rangle| \leq C \int_{\Omega} (\mathcal{A}\nabla h )\cdot \nabla h \, dx,  \text{ for all  }h\in C^{\infty}_0(\Omega).
\end{equation}

Simultaneously, a corresponding theory  will be developed for  (possibly sign changing) weak solutions to the equation with quadratic growth in the gradient:
\begin{equation}\label{ricintro}
-\text{div}(\mathcal{A}\nabla v )= (\mathcal{A}\nabla v)\cdot\nabla v + \sigma \text{ in } \Omega.
\end{equation}
In displays (\ref{schrointro})--(\ref{ricintro}), $\mathcal{A}: \Omega \rightarrow \mathbf{R}^{n\times n}$ is a real $n\times n$ (possibly non-symmetric) matrix-valued function on $\Omega$, so that there exist $m, M>0$ such that, for almost every $x\in\Omega$
\begin{equation}\label{elliptic}
m|\xi|^2\leq \mathcal{A}(x)\xi\cdot \xi, \text{ and } |\mathcal{A}(x )\xi|\leq M|\xi|,  \text{ for all }\xi\in \mathbf{R}^n.
\end{equation}

It has been a long standing problem to extend the existing theory to general classes of  $\sigma$,  including highly oscillating, singular or distributional
potentials, where  the separation of the positive and negative parts of $\sigma$ is impossible due to
 the interaction between them. In our framework of distributional  $\sigma$, positive solutions to the Schr\"odinger equation are not locally bounded, and consequently standard PDE tools based on Harnack's inequality, and
the classical iterative techniques of Moser \cite{Mos60} and
Trudinger \cite{Tru73}, or  their
extension due to Brezis and Kato \cite{BK79}, are no longer available.

A primary result of the present paper is the following principle: \textit{For any form bounded potential $\sigma$ with the upper form bound strictly less than $1$, one can find a positive solution of (\ref{schrointro}) which lies in the local Sobolev space} $L^{1,2}_{\text{loc}}(\Omega)$.
%Here  $L^{1,2}_{\text{loc}}(\Omega)$ is defined to be the space of functions locally in the homogeneous Sobolev space $\Lo(\Omega)$, which in turn is defined as the closure of functions $\varphi \in C^{\infty}_0(\Omega)$ in the semi-norm $||\nabla \varphi||_{L^2(\Omega)}$.
This regularity for positive solutions of (\ref{schrointro}) is in fact optimal in the generality of the potentials considered here, as demonstrated by  examples discussed below.   Furthermore, there is a two way correspondence between the existence of positive solutions $u\in L^{1,2}_{\text{loc}}(\Omega)$ of (\ref{schrointro}) satisfying an additional \textit{logarithmic Caccioppoli-type condition}, and the form boundedness of the potential $\sigma$; see Theorem \ref{gensymthm}.

As a consequence, necessary
and sufficient conditions  will be established for both the form-boundedness
and  the positivity
of the quadratic form of the Schr\"{o}dinger type operator
$\mathcal{H}=-\text{div} (\mathcal{A}\nabla \cdot ) - \sigma$ with arbitrary distributional
potential  $\sigma \in D'(\Omega)$. The form
boundedness property (\ref{imbintro}) is known to be equivalent to the boundedness of the operator $\mathcal{H}: \Lo(\Omega) \to L^{-1,2}(\Omega) $ from the homogeneous Sobolev space $\Lo(\Omega)$ into its dual.  It is therefore a natural class of potentials in which to study the Schr\"{o}dinger equation.  In a wide class of domains $\Omega$, it has been characterized by the second and third authors \cite{MV1, MV5}.

Our results for the equations (\ref{schrointro}) and (\ref{ricintro}) in turn provide an alternative proof  (with a sharp upper
form bound) of the characterization of (\ref{imbintro}) established in \cite{MV1, MV5}, where harmonic analysis and potential theory methods were employed.  In addition, we obtain a characterization of potentials $\sigma\in \mathcal{D}'(\Omega)$ satisfying the corresponding \textit{semi-boundedness}
property, so that  the operator $\mathcal{H}$ is non-negative:
\begin{equation}\label{halfform}
\langle \sigma, h^2 \rangle \leq  \int_{\Omega} (\mathcal{A}\nabla h )\cdot \nabla h \, dx, \text{ for all }h\in C^{\infty}_0(\Omega).
\end{equation}

Both equations (\ref{schrointro}) and (\ref{ricintro}), as well as  the
quadratic form properties of  $\mathcal{H}$ (\ref{imbintro}) and (\ref{halfform})
 are of fundamental importance to partial differential equations, spectral theory,  and mathematical physics. Consequently,  these questions  have attracted the attention of many authors,
starting from the foundational work of B\^{o}cher, Hartman, Hille, and Wintner   on the Sturm-Liouville theory (see e.g., \cite{Hi48}, \cite{Har82}, Chapter 11),  followed by contributions of Agmon \cite{Ag83}, Aizenman and Simon \cite{AS82}, Ancona \cite{An86},
Brezis and Kato \cite{BK79}, Chung and Zhao \cite{CZ95}, Maz'ya
\cite{Maz85},
Murata \cite{Mur86} et al. in the multi-dimensional case. A recent survey of this rich area has been given by Pinchover \cite{Pin07}. We  also refer to
 \cite{Maz69, CFKS89, NP92, BNV94, BM97, RS98, RSS94, SW99, Fit00, MS00, Sha00, Mur02, DN02, DD03}   and references therein for equation (\ref{schrointro}) and
form-boundedness properties (\ref{imbintro}), (\ref{halfform}), and  \cite{AHBV, Ev90, FM98, FM00} for equation (\ref{ricintro}).

 Given the wealth of the previous literature, it is important to stress what is novel about our approach.  In all the papers
 listed above, various assumptions on the potential $\sigma$ ensure
 the validity of Harnack's inequality for positive solutions of the
 Schr\"odinger equation or some form of compactness properties
 of $\mathcal{H}$. Moreover, $\sigma$ is usually decomposed into the sum of its
 positive and negative parts: $\sigma= \sigma_{+}-\sigma_{-}$, which are treated separately, with more stringent assumptions on $\sigma_{+}$
 than $\sigma_{-}$. In many of these results $\sigma_+$ is assumed to
 belong to the Kato class of potentials, while $\sigma_{-}$ to the local Kato class (see \cite{AS82}, \cite{CZ95}). The corresponding
 positive solutions are continuous, and the existence of a positive
 solution is equivalent to the positivity of the quadratic form of $\mathcal{H}$. In the mathematical physics literature the latter  is known as  the Allegretto-Piepenbrink theorem (see e.g. \cite{CFKS89}, Sec. 2). All these tools are not available for general potentials $\sigma$.

   The primary
technical hurdles of our approach in comparison with the existing literature arise from the following essential characteristics of $\sigma$ satisfying (\ref{imbintro}):
\begin{enumerate}
\item $\sigma$ in general does not lie \textit{globally} in a dual Sobolev space, i.e. $\sigma\not\in L^{-1,s}(\Omega)$ for any $s>0$;
\item there are no local compactness conditions on $\sigma$.
\end{enumerate}

From the first item above, it is clear that one cannot follow standard methods to achieve global estimates which would yield the existence of solutions of (\ref{schrointro}).  Indeed, there are simple examples of $\sigma$ so that a solution $u$ of (\ref{schrointro}) does not lie in $L^1(\Omega)$.  On the other hand, as a result of the second item, finding the correct quantity to work with in order to prove local estimates
becomes a subtle issue.

We will see that the two inequalities contained in (\ref{imbintro}) are responsible for two distinct aspects of the existence of solutions to (\ref{schrointro}) and (\ref{ricintro}).
Let us therefore consider the following upper and lower bounds of the quadratic form $\langle \sigma , h^2\rangle$:
\begin{equation}\label{introupper}
  \langle \sigma , h^2\rangle \leq \lambda \int_\Omega (\mathcal{A}\nabla h)
\cdot\nabla h \, dx, \quad \text{ for all }h\in C^{\infty}_0(\Omega),
\end{equation}
and
\begin{equation}\label{introlower}
  \langle \sigma , h^2\rangle  \geq -\Lambda \int_\Omega (\mathcal{A}\nabla h)\cdot
\nabla h \, dx, \quad \text{ for all }h\in C^{\infty}_0(\Omega).
  \end{equation}
%The local energy space $L^{1,2}_{\text{loc}}(\Omega)$ is defined as the space of functions locally in the homogenous Sobolev space $\Lo(\Omega)$, which in turn is defined as the closure of $\varphi \in C^{\infty}_0(\Omega)$ in $||\nabla \varphi||_{L^2(\Omega)}$.
In what follows, a positive function $u$ on $\Omega$ is defined to be a function $u\in L^{1,2}_{\text{loc}}(\Omega)$ such that $u>0$ quasi-everywhere in $\Omega$. Let us now state our first theorem:
  \begin{thm}\label{gensymthm}  Let $\Omega \subseteq \mathbf{R}^{n}$, $n \ge 1$, be an open set. Let $
\sigma \in D'(\Omega)$, and $
\mathcal{A}:\Omega \rightarrow \mathbf{R}^{n\times n}$ be a
matrix function satisfying the ellipticity conditions (\ref{elliptic}).  Then the following
statements hold:

  \indent {\rm(i)} Suppose $\sigma$ obeys
(\ref{introupper}) with an upper form bound $\lambda < 1$, and (\ref{introlower}) with a lower form bound $\Lambda>0$.  Then $\sigma\in L^{-1,2}_{\text{loc}}(\Omega)$, and
there exists a positive solution $u\in L^{1,2}_{\text{loc}}(\Omega)$ of
the equation
  \begin{equation}\label{schro-equation}
  -{\rm{div}}(\mathcal{A}\nabla u) = \sigma \, u \quad \text{ in }D'(\Omega),
    \end{equation}
    so that the following \textit{logarithmic Caccioppoli inequality} holds:
       \begin{equation}\label{class1}
\int_{\Omega}  \frac {|\nabla  u|^2}{u^2} \varphi^2 dx \leq C_0\int_{\Omega} |\nabla
\varphi|^2 dx, \quad \text{ for all }\varphi\in C^{\infty}_0(\Omega),
   \end{equation}
with $C_0=C_0(n, m, M, \Lambda)>0$.

  \indent {\rm(ii)} Suppose $\sigma\in L^{-1,2}_{\text{loc}}(\Omega)$, and there exists a solution $u\in L^{1,2}_{\text{loc}}(\Omega)$ satisfying (\ref{class1}) for a constant $C_0>0$.  Then there exists a solution $v\in
L^{1,2}_{\text{loc}}(\Omega)$ of the equation
   \begin{equation} \label{riccati-equation}
  -{\rm{div}}(\mathcal{A}\nabla v) = \mathcal{A}(\nabla v)\cdot
\nabla v + \sigma \quad \text{ in }D'(\Omega),
    \end{equation}
    such that
     \begin{equation}\label{class2}
\int_{\Omega}  |\nabla  v|^2 \, \varphi^2 dx \leq C_0\int_{\Omega} |\nabla
\varphi|^2 dx, \quad \text{ for all }\varphi\in C^{\infty}_0(\Omega).
   \end{equation}

  \indent {\rm(iii)} Suppose (\ref{riccati-equation}) has a solution $v\in
L^{1,2}_{\text{loc}}(\Omega)$ satisfying (\ref{class2}) for a positive constant $C_0>0$.  Then
  $\sigma$ satisfies  the lower form bound (\ref{introlower}) for a positive constant $\Lambda = \Lambda(C_0, m, M)>0 $, and
the upper form bound (\ref{introupper}) with

\indent\indent\indent(a) $\, \lambda =1$ if $\mathcal{A}$ is symmetric;

\indent\indent\indent(b) $\, \displaystyle \lambda = \Bigl(\frac{M}{m}\Bigl)^2$ if $\mathcal{A}$ is non-symmetric.

      \end{thm}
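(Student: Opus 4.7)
I would treat the three parts in reverse order, since (iii) is essentially algebraic, (ii) reduces to the substitution $v = \log u$, and (i) is the main existence result requiring an approximation argument.

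\textbf{Part (iii).} Test (\ref{riccati-equation}) against $h^2$ for $h \in C^\infty_0(\Omega)$ to obtain
\[
\langle \sigma, h^2 \rangle \,=\, 2\int_\Omega (\mathcal{A}\nabla v \cdot \nabla h)\, h\, dx \,-\, \int_\Omega (\mathcal{A}\nabla v \cdot \nabla v)\, h^2\, dx.
\]
When $\mathcal{A}$ is symmetric, the Cauchy--Schwarz inequality in the positive-definite form $\xi \mapsto \mathcal{A}\xi \cdot \xi$ bounds $2\mathcal{A}\nabla v \cdot h \nabla h$ by $\mathcal{A}\nabla v \cdot \nabla v\, h^2 + \mathcal{A}\nabla h \cdot \nabla h$, yielding the upper bound with $\lambda = 1$. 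In the non-symmetric case, combining $|\mathcal{A}\xi| \le M|\xi|$, $\mathcal{A}\xi\cdot\xi \ge m|\xi|^2$, and Young's inequality gives $\lambda = (M/m)^2$. For the lower bound, crude estimation yields $\langle \sigma, h^2\rangle \ge -c(m,M)\int_\Omega (|\nabla v|^2 h^2 + |\nabla h|^2)\,dx$; absorbing the first integral via (\ref{class2}) and converting $\int |\nabla h|^2$ into $\int \mathcal{A}\nabla h\cdot\nabla h$ using ellipticity produces the constant $\Lambda = \Lambda(C_0, m, M)$.

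\textbf{Part (ii).} Set $v = \log u$. The logarithmic Caccioppoli (\ref{class1}) together with Poincar\'e and John--Nirenberg shows $\log u \in \mathrm{BMO}_{\text{loc}}(\Omega)$, so $u^{\pm 1}\in L^p_{\text{loc}}(\Omega)$ for some small $p > 0$. The chain rule $\nabla v = \nabla u/u$ then gives $v \in L^{1,2}_{\text{loc}}$, and (\ref{class2}) is immediate from (\ref{class1}). To verify (\ref{riccati-equation}), test (\ref{schro-equation}) against $\psi/\max(u,\varepsilon)$ for $\psi\in C^\infty_0(\Omega)$, let $\varepsilon\downarrow 0$, and expand via the chain rule to recover the weak form of (\ref{riccati-equation}).

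\textbf{Part (i).} First, to see $\sigma\in L^{-1,2}_{\text{loc}}(\Omega)$, given $\varphi\in C^\infty_0(K)$ for compact $K\subset\Omega$ and a cutoff $\eta\equiv 1$ on $K$, polarize $4\varphi\eta = (t\varphi + \eta/t)^2 - (t\varphi - \eta/t)^2$, apply (\ref{introupper})--(\ref{introlower}) to each square, and optimize in $t>0$ to get $|\langle\sigma,\varphi\rangle| \le C\|\nabla\varphi\|_{L^2}$. Next, fix an exhaustion $\Omega_k \Subset \Omega_{k+1}\nearrow\Omega$ and on each $\Omega_k$ solve the Dirichlet problem $-\mathrm{div}(\mathcal{A}\nabla u_k) - \sigma u_k = 0$ with $u_k - 1 \in \Lo(\Omega_k)$; this is solvable by Lax--Milgram because $\lambda < 1$ makes the bilinear form coercive, and the maximum principle forces $u_k > 0$ quasi-everywhere. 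Testing the equation against $\varphi^2/u_k$ and using (\ref{introlower}), ellipticity, and Young's inequality yields (\ref{class1}) for $u_k$ with constant independent of $k$. After normalizing $\tilde u_k = u_k/\exp(c_k)$, where $c_k$ is an average of $\log u_k$ over a fixed ball, the uniform BMO bound on $\log \tilde u_k$ yields uniform $L^p_{\text{loc}}$ control on $\tilde u_k^{\pm 1}$, while a Caccioppoli estimate (test against $\tilde u_k \chi^2$, exploit $\lambda<1$) produces uniform $L^{1,2}_{\text{loc}}$ bounds on $\tilde u_k$; a subsequence converges weakly in $L^{1,2}_{\text{loc}}$ and a.e.\ to a positive limit $u$ solving (\ref{schro-equation}), with (\ref{class1}) preserved by lower semicontinuity. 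This compactness step is the \emph{main obstacle}: because $\log u_k$ is only BMO-controlled (not $L^\infty$), classical Harnack/Moser iteration is unavailable; one must use John--Nirenberg to pass to the limit in the nonlinear product $\sigma u_k$, and upgrading the conclusion to $u>0$ quasi-everywhere (not merely a.e.) requires additional capacity-theoretic input.
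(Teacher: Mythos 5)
Parts (ii) and (iii) follow the paper's route closely and are essentially correct. For (iii), testing against $h^2$ and using the $\mathcal{A}$-Cauchy-Schwarz inequality (symmetric case) or Young's inequality with $m,M$ (non-symmetric case) is exactly the paper's argument, and the lower bound via (\ref{class2}) is likewise as in Lemma~\ref{backwithgrad}. For (ii), the paper substitutes $v=\log u$ and passes to the limit in $\psi(u+\varepsilon)^{-1}$ test functions; your $\psi/\max(u,\varepsilon)$ variant is the same idea (the paper's choice is technically smoother but either works), and you correctly note (\ref{class2}) follows immediately from (\ref{class1}).

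Part (i) has the right skeleton — exhaustion, Lax--Milgram, maximum principle, logarithmic Caccioppoli from testing against $\varphi^2/u_k$, a BMO bound on $\log u_k$ via Poincar\'e and John--Nirenberg — but there are two concrete gaps. First, you test against $\varphi^2/u_k$ directly, yet without mollifying $\sigma$ (and $\mathcal{A}$) there is no reason $u_k$ is bounded away from zero on compact sets, so $\varphi^2/u_k$ is not obviously an admissible test function; the paper mollifies $\sigma$ to $\sigma_j$ precisely so that Harnack's inequality holds for the approximants, making this test function legitimate. Second — and this is the more serious issue, which you yourself flag as ``the main obstacle'' without resolving — John--Nirenberg applied to the BMO bound on $\log\tilde u_k$ only gives uniform $L^p_{\text{loc}}$ control on $\tilde u_k^{\pm 1}$ for a \emph{small} exponent $p=p(\|\log\tilde u_k\|_{\rm BMO})$, not for $p=2$. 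The Caccioppoli estimate bounds $\int_{B_r}|\nabla\tilde u_k|^2$ by $r^{-2}\int_{B_{2r}}\tilde u_k^2$, so the small-$p$ control alone does not produce the uniform $L^{1,2}_{\text{loc}}$ bounds you assert. The paper closes this gap by combining the Caccioppoli inequality with the Sobolev inequality to show that $u_k^2$ satisfies a weak reverse H\"{o}lder inequality, then invoking Proposition~\ref{locdoub}: a weak reverse H\"{o}lder weight whose logarithm lies in BMO is doubling (via Lemma~\ref{reverseholder} and John--Nirenberg). Doubling of $u_k^2$, together with the fixed normalization $\int_B u_k^2=1$ and a Harnack chain across $\Omega_j$, is what finally yields the uniform local $L^2$ bound that feeds back into Caccioppoli. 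This interpolation between the reverse H\"{o}lder inequality and the BMO estimate is the paper's key technical device and is absent from your proposal.
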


%Theorem \ref{gensymthm} has an analogue in the case of non-symmetric operators $\mathcal{A}$.  This is presented in Theorem \ref{nonsymrem}.

In the case where $\sigma$ is a \textit{positive measure}, the relationship between positive \textit{superharmonic} supersolutions of (\ref{schro-equation}) and the validity of (\ref{introupper}) has been discussed in a general framework using probabilistic methods by Fitzsimmons in \cite{Fit00}.  This work was in turn building on the important paper of Ancona \cite{An86}.  In our framework, as we are considering oscillating potentials, one cannot rely on the theory of superharmonic functions, and we need to prove sharper estimates in order to obtain the stronger $L^{1,2}_{\text{loc}}$ regularity.  Without this it is not obvious how to even make sense of solutions to the equation (\ref{schrointro}).

The sharpness of Theorem \ref{gensymthm} is exhibited by well known examples.  One such example is included in Sec.~\ref{examples} for the convenience of the reader.  Here it is also shown that in general there exist positive solutions $u$ of (\ref{schrointro}) which lie in the space $L^{1,1}_{\text{loc}}(\Omega)$, but not $L^{1,2}_{\text{loc}}(\Omega)$, and that statement (i) of Theorem \ref{gensymthm} fails in general if $\lambda = 1$.  One should also note that in statement (iii), in the case of a non-symmetric matrix $\mathcal{A}$, the constant $\lambda$ must in general depend on $M$ and $m$, see Sec.~\ref{examples}.

\subsection{} In Theorem \ref{gensymthm}, it was seen that the lower form bound (\ref{introlower}) on the potential $\sigma$ is necessary in order to obtain solutions satisfying the regularity conditions (\ref{class1}) and (\ref{class2}).  These conditions are of importance in our application of Theorem \ref{gensymthm} in characterizing the inequality (\ref{imbintro}), and are also of classical interest in partial differential equations.

However, if one is solely interested in the existence of solutions to (\ref{schrointro}) and (\ref{ricintro}), then the conditions on both the operator $\mathcal{A}$ and $\sigma$ may be relaxed to local conditions.  We say that $\mathcal{A}$ satisfies a \textit{local ellipticity and boundedness} assumption if, for each $U\subset\subset \Omega$, there exist positive constants $m_U$ and $M_U$, such that for any $x\in U$, we have
\begin{equation}\label{locelliptic}
m_U|\xi|^2 \leq \mathcal{A}(x)\xi \cdot \xi, \text{ and }|\mathcal{A}(x)\xi|\leq M_U|\xi|,\text{ for all }\xi\in \mathbf{R}^n.
\end{equation}
The condition (\ref{introlower}) can be relaxed to a local condition stated in terms of dual Sobolev spaces: when $n\geq 3$, suppose
\begin{equation}\label{sigmalocmor}\sigma = \text{div}(\vec G)  \text{ in }\Omega, \text{ where } \vec G\in \mathcal{L}^{2,n-2}_{\text{loc}}(\Omega)^n.
\end{equation}
Here $\mathcal{L}_{\text{loc}}^{2, n-2}(\Omega)$ is the local Morrey space defined in Sec.~\ref{Preliminaries}; see (\ref{locmordef})
and  (\ref{locsobball})   below.  This condition is significantly weaker than (\ref{imbintro}). In dimensions $n=1, 2$ we only require $\sigma \in L^{-1,2}_{\text{loc}}(\Omega)$.

The following theorem should be compared to statements (i) and (ii) of Theorem \ref{gensymthm} above:
\begin{thm}\label{refine}Let $\Omega \subseteq \mathbf{R}^{n}$ be an open set, and suppose that $
\mathcal{A}$ satisfies the local ellipticity and boundedness conditions (\ref{locelliptic}).  Let $\sigma\in \mathcal{D}'(\Omega)$ satisfy (\ref{introupper}) with a constant $0<\lambda<1$, and in addition suppose\\
\indent (a) $\, n=1, 2$, and $\sigma\in L^{-1,2}_{\text{loc}}(\Omega)$;\\
\indent (b) $\, n\geq 3$, and $\sigma$ satisfies the local condition
(\ref{sigmalocmor}).\\
Then there exists a positive solution $u\in L^{1,2}_{\text{loc}}(\Omega)$ of (\ref{schrointro}), and a solution $v\in L^{1,2}_{\text{loc}}(\Omega)$ of (\ref{ricintro}).
\end{thm}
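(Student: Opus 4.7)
The plan is to deduce Theorem \ref{refine} from Theorem \ref{gensymthm}(i) by exhausting $\Omega$ with a nested sequence of smooth bounded subdomains $\Omega_1 \subset\subset \Omega_2 \subset\subset \cdots$ with $\bigcup_j \Omega_j = \Omega$, solving the equation on each $\Omega_j$, and then extracting a limit. On $\Omega_j$, the matrix $\mathcal{A}$ satisfies (\ref{elliptic}) with constants $m_j = m_{\Omega_j}$, $M_j = M_{\Omega_j}$, and the global upper form bound (\ref{introupper}) with $\lambda < 1$ restricts to $h \in C^{\infty}_{0}(\Omega_j)$. The local hypothesis on $\sigma$ produces a lower form bound on $\Omega_j$: in case (b) ($n\geq 3$), writing $\langle \sigma, h^2\rangle = -2\int \vec G \cdot h\, \nabla h\, dx$ and applying Fefferman's trace inequality for the Morrey weight $|\vec G|^2 \in L^{1,n-2}(\Omega_j)$ yields
\begin{equation*}
|\langle \sigma, h^2 \rangle| \leq \Lambda_j \int_{\Omega_j} |\nabla h|^2\,dx, \qquad h \in C^{\infty}_{0}(\Omega_j),
\end{equation*}
with $\Lambda_j$ controlled by $\|\vec G\|_{\mathcal{L}^{2,n-2}(\Omega_j)}$; in case (a) ($n\le 2$), the same bound follows from $\sigma\in L^{-1,2}_{\text{loc}}(\Omega)$ via the low-dimensional Sobolev embedding into $L^\infty$ (resp.\ into every $L^p$ via Trudinger--Moser). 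Theorem \ref{gensymthm}(i) then produces a positive $u_j \in L^{1,2}_{\text{loc}}(\Omega_j)$ solving $-\text{div}(\mathcal{A}\nabla u_j) = \sigma u_j$ on $\Omega_j$.

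To extract a limit with estimates uniform on fixed compact subdomains, I would bypass the $\Omega_j$-dependent Caccioppoli constant supplied by Theorem \ref{gensymthm} and re-derive the logarithmic Caccioppoli for $u_j$ by testing the equation (via a standard truncation) with $\varphi^2/u_j$ for $\varphi\in C^{\infty}_{0}(\Omega_k)$; combining ellipticity, Young's inequality, and the \emph{local} form bound on $\Omega_k$ (which is independent of $j$) gives
\begin{equation*}
\int_{\Omega} \frac{|\nabla u_j|^2}{u_j^2}\,\varphi^2\,dx \leq C_k \int_{\Omega} |\nabla \varphi|^2\,dx, \qquad \varphi \in C^{\infty}_{0}(\Omega_k),\ j>k,
\end{equation*}
with $C_k = C_k(m_k,M_k,\lambda,\Lambda_k)$ independent of $j$. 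After normalizing $\dashint_{B_0}\log u_j\,dx = 0$ on a fixed ball $B_0 \subset\subset \Omega_1$, Poincar\'e's inequality upgrades this to a uniform bound of $\log u_j$ in $L^{1,2}(\Omega_k)$. A diagonal subsequence together with Rellich's theorem then yields $\log u_{j_\ell} \to w$ strongly in $L^{2}_{\text{loc}}(\Omega)$, weakly in $L^{1,2}_{\text{loc}}(\Omega)$, and almost everywhere; set $u := e^{w}$, which is positive and in $L^{1,2}_{\text{loc}}(\Omega)$, with the Caccioppoli estimate preserved in the limit by lower semicontinuity.

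The main obstacle is passing to the limit in the distributional product $\sigma u_j$ to verify that $u$ solves (\ref{schrointro}) on $\Omega$. The local form bounds above imply $\sigma \in L^{-1,2}_{\text{loc}}(\Omega)$, so the weak formulation $\int \mathcal{A}\nabla u_j \cdot \nabla \psi\,dx = \langle \sigma, u_j \psi\rangle$ makes sense for $\psi\in C^{\infty}_{0}(\Omega)$. The right-hand side converges provided $u_j \psi \to u\psi$ in $L^{1,2}_{0}$ on $\text{supp}\,\psi$, which in turn follows from Trudinger--Moser-type exponential integrability of the uniformly $L^{1,2}_{\text{loc}}$-bounded sequence $\log u_j$, upgrading the a.e.\ convergence $u_{j_\ell} \to u$ to convergence in $L^{p}_{\text{loc}}$ for every $p<\infty$, together with the weak convergence of $\nabla u_{j_\ell} = u_{j_\ell}\,\nabla \log u_{j_\ell}$ in $L^{p}_{\text{loc}}$ for some $p>1$. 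A parallel argument handles the left-hand side, identifying $u$ as a positive $L^{1,2}_{\text{loc}}(\Omega)$ solution of (\ref{schrointro}). Finally, with $v := \log u = w \in L^{1,2}_{\text{loc}}(\Omega)$, the log-Caccioppoli for $u$ is exactly (\ref{class2}) on each $\Omega_k$, and a direct distributional calculation using $\nabla u = u\,\nabla v$ shows that $v$ solves (\ref{ricintro}) on $\Omega$; equivalently, Theorem \ref{gensymthm}(ii) can be applied on each $\Omega_k$ and the local solutions glued by uniqueness up to an additive constant.
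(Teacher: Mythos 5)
Your first step does not work, and it undermines the whole plan. You claim that the local Morrey condition (\ref{sigmalocmor}) gives, via Fefferman's trace inequality, a lower form bound
\begin{equation*}
|\langle \sigma, h^2\rangle| \le \Lambda_j \int_{\Omega_j}|\nabla h|^2\,dx,\qquad h\in C^\infty_0(\Omega_j),
\end{equation*}
so that Theorem \ref{gensymthm}(i) applies on $\Omega_j$. But $\vec G\in\mathcal{L}^{2,n-2}_{\text{loc}}$ only gives $\int_{B_s}|\vec G|^2\le C s^{n-2}$, i.e.\ the weight $V=|\vec G|^2$ lies in the ``$p=1$'' Morrey class. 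The Fefferman--Phong inequality $\int V h^2 \lesssim \|\nabla h\|_2^2$ requires the stronger $L^{p,n-2p}$ Morrey condition for some $p>1$; the $p=1$ condition is necessary but \emph{not} sufficient for the trace inequality (or, equivalently, the form bound). Indeed, the paper explicitly points out that (\ref{sigmalocmor}) ``is significantly weaker than (\ref{imbintro})''; Theorem \ref{refine} is precisely the result where the lower form bound (\ref{introlower}) is dropped. So you cannot invoke Theorem \ref{gensymthm}(i) on $\Omega_j$. In the paper's proof (Proposition \ref{refineexist} via Lemmas \ref{lemcacc}, \ref{lemlog}, \ref{localmorrey} and Proposition \ref{mainprop}), the place where (\ref{introlower}) would be used is replaced by the local Morrey estimate
\begin{equation*}
\int_{B(z,s)}\frac{|\nabla u_k|^2}{u_k^2}\,dx \le C s^{n-2},
\end{equation*}
obtained in Lemma \ref{localmorrey} directly from (\ref{locsobball}) with test functions supported on small balls, not from a form bound.

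A second issue: even granting a log-Caccioppoli bound, the passage from ``$\log u_j$ uniformly bounded in $L^{1,2}_{\text{loc}}$'' to exponential integrability (and hence to $u_j\to u$ in $L^p_{\text{loc}}$ for all $p<\infty$) via Trudinger--Moser is wrong for $n\ge 3$: Trudinger--Moser concerns $W^{1,n}$, not $W^{1,2}$, and an $L^{1,2}_{\text{loc}}$ bound gives only $L^{2n/(n-2)}_{\text{loc}}$ integrability of $\log u_j$. What actually works, and is what the paper does, is the Morrey-type estimate above combined with Poincar\'e to conclude $\log u_j\in BMO(\Omega_j)$ uniformly, then John--Nirenberg for exponential integrability; this is then combined with a weak reverse H\"older inequality for $u_j^2$ (from Caccioppoli plus Sobolev) and the doubling criterion of Proposition \ref{locdoub}, followed by a Harnack chain from the normalizing ball, to obtain the uniform local $L^2$ and gradient bounds of Proposition \ref{mainprop}. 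So your outline needs to be reorganized along these lines: derive the Morrey estimate on $\nabla\log u_j$ from (\ref{sigmalocmor}) directly (not via a form bound), get $BMO$ and doubling, and only then extract the limit.
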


 A crude sufficient condition for (\ref{sigmalocmor}) is $\sigma \in L^{n/2}_{\text{loc}}(\Omega) +L^{-1,n}_{\text{loc}}(\Omega)$, but much more general $\sigma$ are admissible for (\ref{sigmalocmor}).
 We emphasize that condition (\ref{sigmalocmor}) is considerably weaker than the usual  local Kato class condition. It  is not necessary for the existence of a positive solution $u\in L^{1,2}_{\text{loc}}(\Omega)$ of (\ref{schrointro}).   However, it is the sharp condition to obtain solutions $u$ of (\ref{schrointro}) so that $\log(u) \in BMO_{\text{loc}}(\Omega)$.

To prove Theorems \ref{gensymthm} and \ref{refine}, we make crucial use of certain Caccioppoli-type inequalities. As was mentioned above, the classical iterative techniques used in \cite{Mos60, Tru73, BK79, AS82, CFG86, MZ97} are not available.

 Instead, we interpolate between a Caccioppoli inequality and an estimate on the mean oscillation of the logarithm to obtain uniform doubling properties on an approximating sequence. See  Proposition \ref{mainprop}, which
constitutes a key part of the argument.  From this doubling property, one can deduce local uniform gradient estimates.  This technique yields the optimal regularity for solutions of (\ref{schrointro}) in the generality of potentials satisfying (\ref{imbintro}) or (\ref{sigmalocmor}).

Along the way, we obtain a characterization of when a nonnegative weight function satisfying a weak reverse H\"{o}lder inequality is doubling
(see Sec.~2.2 for definitions).
Our main hard analysis tool here is  Proposition \ref{locdoub} which  may be of independent interest.

\subsection{} Let us now turn to discussing applications of Theorem \ref{gensymthm}.  As a first application, we deduce an alternative approach to the results of the second and third authors in \cite{MV1}, regarding the characterization of the inequality (\ref{imbintro}).  It  avoids the heavy harmonic analysis and potential theory  machinery that was used in \cite{MV1}, and is considerably more elementary.  This program is carried out in Sec.~\ref{formbdsec}.  In particular, if $\Omega = \mathbf{R}^n$ and with $\mathcal{A}$ the identity matrix, we will show that the form boundedness condition (\ref{imbintro}) is equivalent to the following representation of $\sigma$:
\begin{equation}\label{mazverintro}
\sigma = \text{div}(\vec\Gamma), \!\text{ with }\!\! \int_{\mathbf{R}^n}\!\!h^2 |\vec\Gamma|^2 dx\leq C_1 \int_{\mathbf{R}^n}\!\!|\nabla h|^2 dx \text{ for all }h \in C^{\infty}_0(\Omega).
\end{equation}
Moreover, (\ref{mazverintro}) with $C_1=\frac 1 4$ implies (\ref{imbintro}) with $C=1$. Conversely, (\ref{imbintro}) with $C<1$
(or more precisely, (\ref{introupper}) with $\lambda <1$ and  (\ref{introlower}) with $\Lambda>0$) implies (\ref{mazverintro}).

In Section \ref{sembdsec}, we consider distributions $\sigma\in \mathcal{D}'(\Omega)$ satisfying the semi-boundedness
property (\ref{halfform}). In the case of the Laplacian, it means
\begin{equation}\label{sembdintro}
\langle \sigma, h^2\rangle \leq \int_{\Omega} |\nabla h|^2 \, dx, \quad \text{ for all }h\in C^{\infty}_0(\Omega).
%\langle \sigma, h^2\rangle \leq \int_{\Omega}\mathcal{A}\nabla h \cdot \nabla h dx, \quad \text{ for all }h\in C^{\infty}_0(\Omega).
\end{equation}
Our main result in this regard is the following. (See Theorem~\ref{semithm} below for a similar criterion
concerning the general operator ${\rm div }\mathcal{A}(\nabla \cdot)$ in place of the Laplacian.)
\begin{thm}\label{critric} A real-valued distribution $\sigma\in \mathcal{D}'(\Omega)$ satisfies (\ref{sembdintro}) if and only if there exists $\vec\Gamma\in L^2_{\text{loc}}(\Omega)^n$, so that
\begin{equation}\label{semibdcondintro}\sigma \leq {\rm div }(\vec\Gamma) - |\vec\Gamma|^2 \qquad \text{ in }\mathcal{D}'(\Omega).
\end{equation}
\end{thm}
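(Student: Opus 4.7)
My plan is to prove the equivalence by combining a completion-of-the-square argument for sufficiency with a positive-supersolution construction for necessity. The sufficiency direction ($\Leftarrow$) is immediate: given $\vec\Gamma\in L^2_{\mathrm{loc}}(\Omega)^n$ with $\sigma\leq\mathrm{div}(\vec\Gamma)-|\vec\Gamma|^2$ in $\mathcal{D}'(\Omega)$, I pair with $h^2$ for $h\in C_0^\infty(\Omega)$ and integrate the divergence by parts to get
\begin{equation*}
\langle\sigma,h^2\rangle\leq -2\int_\Omega h\,\vec\Gamma\cdot\nabla h\,dx-\int_\Omega h^2|\vec\Gamma|^2\,dx,
\end{equation*}
and the pointwise bound $-2h\vec\Gamma\cdot\nabla h\leq|\nabla h|^2+h^2|\vec\Gamma|^2$, which comes from $|\nabla h+h\vec\Gamma|^2\geq 0$, then yields (\ref{sembdintro}).

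For the necessity direction ($\Rightarrow$), I construct $\vec\Gamma$ in the form $-\nabla u/u$ for a positive supersolution $u\in L^{1,2}_{\mathrm{loc}}(\Omega)$ of $-\Delta u=\sigma u$ satisfying the logarithmic Caccioppoli estimate (\ref{class1}); then $\nabla u/u\in L^2_{\mathrm{loc}}(\Omega)^n$ and the distributional chain rule gives $\mathrm{div}(-\nabla u/u)-|\nabla u/u|^2=-\Delta u/u\geq\sigma$. To produce $u$, I introduce a parameter $\epsilon\in(0,1)$: the scaled potential $(1-\epsilon)\sigma$ satisfies the strictly subcritical upper form bound (\ref{introupper}) with $\lambda=1-\epsilon<1$. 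On a smooth exhaustion $\Omega_j\subset\subset\Omega$, I produce positive $u_{j,\epsilon}$ solving $-\Delta u_{j,\epsilon}\geq(1-\epsilon)\sigma\,u_{j,\epsilon}$, for example as normalized principal Dirichlet eigenfunctions of $-\Delta-(1-\epsilon)\sigma$ on $\Omega_j$, whose first eigenvalue is nonnegative by semi-boundedness. Applying the doubling/BMO interpolation of Proposition~\ref{mainprop} together with the logarithmic Caccioppoli estimates that drive Theorem~\ref{gensymthm}(i), I obtain $L^2$ bounds on $\vec\Gamma_{j,\epsilon}:=-\nabla u_{j,\epsilon}/u_{j,\epsilon}$ over any fixed compact subset, uniform in $j$ and $\epsilon$.

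Passing to weak limits in $L^2_{\mathrm{loc}}$, first $j\to\infty$ and then $\epsilon\to 0$, I obtain $\vec\Gamma\in L^2_{\mathrm{loc}}(\Omega)^n$. The divergence term passes to the distributional limit, while weak lower semi-continuity of the $L^2$ norm against nonnegative test functions preserves the inequality direction in $(1-\epsilon)\sigma\leq\mathrm{div}(\vec\Gamma_{j,\epsilon})-|\vec\Gamma_{j,\epsilon}|^2$, yielding $\sigma\leq\mathrm{div}(\vec\Gamma)-|\vec\Gamma|^2$ in $\mathcal{D}'(\Omega)$. The main obstacle is the uniform-in-$\epsilon$ logarithmic Caccioppoli bound: Theorem~\ref{gensymthm}(i) as stated also demands a lower form bound $\Lambda>0$, which semi-boundedness does not supply, so it cannot be invoked directly. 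Overcoming this requires reworking the interpolation of Proposition~\ref{mainprop} in a purely one-sided setting, absorbing the missing lower bound via the boundary-data normalization on the precompact $\Omega_j$ and exploiting that we only need a supersolution, rather than a solution, of $-\Delta u=\sigma u$.
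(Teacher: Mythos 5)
Your sufficiency argument (completion of the square: pair with $h^2$, integrate by parts, use $|\nabla h + h\vec\Gamma|^2 \geq 0$) is exactly the paper's, via Lemma~\ref{symrefine}. Your overall strategy for necessity — mollify and scale $\sigma$ to get strictly subcritical smooth potentials $\sigma_j$, solve the Riccati equation on a smooth exhaustion, obtain a uniform local $L^2$ bound on the gradients, and pass to a weak limit using lower semicontinuity of $\int h^2|\cdot|^2$ against $h^2\geq 0$ — is also the paper's (Proposition~\ref{semiprop}), and your observation about which way the inequality passes through the $\liminf$ is correct.

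The genuine gap is exactly where you flag it, but your proposed repair points in the wrong direction and would not succeed as described. You propose to obtain the uniform-in-$(j,\epsilon)$ bound on $\vec\Gamma_{j,\epsilon}=-\nabla u_{j,\epsilon}/u_{j,\epsilon}$ by "reworking the interpolation of Proposition~\ref{mainprop} in a purely one-sided setting, absorbing the missing lower bound via the boundary-data normalization." But Proposition~\ref{mainprop} and the doubling/BMO machinery of Theorem~\ref{gensymthm}(i) are built on Lemma~\ref{lemlogcacc}, whose whole point is to bound $-\langle\sigma_k, \psi^2\rangle$ via the lower form bound $\Lambda$; normalization of Dirichlet data does not substitute for that, because $-\sigma$ can have arbitrarily heavy positive part on small sets. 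The paper's resolution is different and much more elementary: do not go through a uniform logarithmic Caccioppoli estimate at all. Instead, test the Riccati equation $-\mathrm{div}(\mathcal{A}_k\nabla v_k) = (\mathcal{A}_k\nabla v_k)\cdot\nabla v_k + \sigma_k$ directly with $h^2$ for a fixed $h\in C^\infty_0(\Omega_j)$. The quadratic term $\int h^2\,\mathcal{A}_k\nabla v_k\cdot\nabla v_k$ then appears on the right with the right sign and moves to the left, leaving only $\int|\nabla h|^2$ and the single distributional pairing $-\langle\sigma_k, h^2\rangle = -\langle\sigma, \varphi_{\varepsilon_k}*h^2\rangle$ to control. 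Since $\varphi_{\varepsilon_k}*h^2$ stays in a fixed compact set with uniformly bounded derivatives, the structure theorem for distributions gives a bound $|\langle\sigma,\varphi_{\varepsilon_k}*h^2\rangle|\le C(\sigma,h)$ that is independent of $k$ (display (\ref{vjbd1}) in the paper). No lower form bound, no reverse H\"older/doubling interpolation, and no $\epsilon$-scaling of $\sigma$ is needed once this is observed; the scaling $\lambda_j\to 1$ of the paper replaces your two-parameter $(j,\epsilon)$ limit. A secondary issue: you build $u_{j,\epsilon}$ as principal Dirichlet eigenfunctions of $-\Delta-(1-\epsilon)\sigma$ on $\Omega_j$, but for a raw distributional $\sigma$ (even with $(1-\epsilon)$ scaling) the eigenvalue problem need not be well posed; the paper sidesteps this by mollifying $\sigma$ first, observing that the smooth $\sigma_j\in L^\infty(\Omega_j)$ satisfies a crude lower form bound on the bounded set $\Omega_j$ (display (\ref{quallowbd})), so Theorem~\ref{gensymthm} applies directly and the Lax--Milgram construction of Section~\ref{approxlinesub} produces the approximants.
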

 The inequality in (\ref{semibdcondintro}) can not in general be strengthened to an equality.
Such conditions have their roots in classical Sturm-Liouville theory in
one dimension, see e.g.  \cite{Har82} (Chapter 11, Theorem 7.2).

It had been conjectured that a condition characterizing (\ref{sembdintro}) was the following:
\begin{equation}\label{falseintro}
\sigma \leq \text{div}(\vec\Phi), \text{ where } \int_{\Omega}|h|^2 |\vec\Phi|^2 dx\leq C  \int_{\Omega}|\nabla h|^2 dx,
\text{ for all }h \in C^{\infty}_0(\Omega),
\end{equation}
for some $\vec \Phi \in L^2_{\text{loc}}(\Omega)^n$ and $C>0$.
In other words, this means  that `half' the condition (\ref{mazverintro}) found in \cite{MV1} should characterize semi-boundedness.  However, it is proved below that, for any $C>0$, condition  (\ref{falseintro}) is \textit{not necessary} for   (\ref{sembdintro}) to hold, although it
is obviously sufficient when $C=\frac 1 4$.
\begin{prop}Let $\Omega = \mathbf{R}^n$, $n\geq 1$.   Let $\sigma$ be the radial potential defined by
$$ \sigma = \cos r + \frac{n-1}{r}\sin r -\sin ^2 r,
$$
where $r=|x|$. Then $\sigma$ satisfies (\ref{sembdintro}), but cannot be represented in the form (\ref{falseintro}).
\end{prop}
This is the content of Proposition~\ref{conjfalse} in Sec.~7, where additional examples are exhibited to help clarify our results.

Theorem \ref{critric} above concerned the case when $\lambda=1$ in (\ref{introupper}), so that the Schr\"{o}ding\-er operator fails to be coercive in the homogeneous Sobolev space.  In the case when $\sigma$ is a positive measure, one can instead consider superharmonic supersolutions of the equation (\ref{schrointro}) in this critical case.  This is a sharpening of \cite{Fit00} mentioned above.  Indeed:
\begin{prop}\label{critpropintro}  Suppose that $\Omega$ is an open set, and let $\sigma$ be a positive Borel measure defined on $\Omega$.  Let $\mathcal{A}$ be a symmetric matrix function satisfying (\ref{elliptic}).  Then $\sigma$ satisfies
\begin{equation}\label{positive1}
\int_{\Omega}h^2 d\sigma \leq \int_{\Omega} \mathcal{A}(\nabla h)\cdot\nabla h \, dx, \text{ for all } \, \,  h\in C^{\infty}_0(\Omega),
\end{equation}
if and only if there exists a positive superharmonic function $u$ so that
$$-{\rm div }\mathcal{A}(\nabla u) \geq \sigma u \quad \text{ in }\Omega.$$
\end{prop}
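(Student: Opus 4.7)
\emph{Overall strategy.} The plan is to treat the two directions separately: the reverse implication via the classical ``ground state transform'', and the forward implication via an $\epsilon$-regularization that allows an appeal to Theorem~\ref{gensymthm}(i), followed by passage to the limit in the associated Riccati equation. Symmetry of $\mathcal{A}$ enters crucially via the Cauchy--Schwarz inequality for its quadratic form.

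\emph{Reverse implication.} Suppose $u>0$ satisfies $-\mathrm{div}(\mathcal{A}\nabla u)\ge \sigma u$ in $\mathcal{D}'(\Omega)$. For $h\in C_0^\infty(\Omega)$ and $\delta>0$, I would test this distributional inequality against the nonnegative function $\varphi = h^2/(u+\delta)$. A direct computation gives
\begin{equation*}
\mathcal{A}\nabla u \cdot \nabla\varphi = \frac{2h}{u+\delta}\,\mathcal{A}\nabla u\cdot\nabla h - \frac{h^2}{(u+\delta)^2}\,\mathcal{A}\nabla u \cdot \nabla u,
\end{equation*}
and the Cauchy--Schwarz inequality $|\mathcal{A}\nabla u\cdot\nabla h|\le \sqrt{\mathcal{A}\nabla u\cdot\nabla u}\sqrt{\mathcal{A}\nabla h\cdot\nabla h}$, combined with Young's inequality to absorb the cross term, yields the pointwise bound $\mathcal{A}\nabla u\cdot\nabla\varphi \le \mathcal{A}\nabla h\cdot\nabla h$ almost everywhere. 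Integration and the supersolution hypothesis give $\int_\Omega h^2\,u/(u+\delta)\,d\sigma \le \int_\Omega \mathcal{A}\nabla h\cdot\nabla h\,dx$, and monotone convergence as $\delta\downarrow 0$ delivers (\ref{positive1}). Admissibility of $\varphi$ as a test function rests on the classical logarithmic Caccioppoli inequality for positive $\mathcal{A}$-superharmonic functions, which ensures $\nabla\log(u+\delta) \in L^2_{\mathrm{loc}}(\Omega)$ and hence $\nabla\varphi\in L^2_{\mathrm{loc}}$ with compact support.

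\emph{Forward implication.} For $\epsilon \in (0,1)$, the potential $(1-\epsilon)\sigma$ satisfies the upper form bound (\ref{introupper}) with $\lambda = 1-\epsilon < 1$, while, since $\sigma\ge 0$, the lower bound (\ref{introlower}) is trivial with any $\Lambda > 0$ (say $\Lambda = 1$). Theorem~\ref{gensymthm}(i) therefore produces a positive $u_\epsilon \in L^{1,2}_{\mathrm{loc}}(\Omega)$ solving $-\mathrm{div}(\mathcal{A}\nabla u_\epsilon) = (1-\epsilon)\sigma\,u_\epsilon$ and satisfying (\ref{class1}) with a constant $C_0$ independent of $\epsilon$. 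Setting $v_\epsilon = \log u_\epsilon$, normalized to have mean zero on a fixed ball $B_0 \subset\subset \Omega$, the uniform logarithmic Caccioppoli estimate combined with Poincar\'e's inequality gives uniform $W^{1,2}_{\mathrm{loc}}$ bounds; extract a subsequence with $v_\epsilon \rightharpoonup v$ weakly in $W^{1,2}_{\mathrm{loc}}$ and a.e. Rewriting the linear equation for $u_\epsilon$ in terms of $v_\epsilon$ produces the Riccati equation $-\mathrm{div}(\mathcal{A}\nabla v_\epsilon) = (1-\epsilon)\sigma + \mathcal{A}\nabla v_\epsilon\cdot\nabla v_\epsilon$. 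Weak lower semicontinuity of the convex functional $w\mapsto \int \varphi\,\mathcal{A}\nabla w\cdot \nabla w\,dx$ (for $\varphi\ge 0$) then promotes this to the Riccati inequality $-\mathrm{div}(\mathcal{A}\nabla v)\ge \sigma + \mathcal{A}\nabla v\cdot\nabla v$ in $\mathcal{D}'(\Omega)$. Finally, the formal identity $-\mathrm{div}(\mathcal{A}\nabla e^v) = e^v\bigl(-\mathrm{div}(\mathcal{A}\nabla v) - \mathcal{A}\nabla v\cdot\nabla v\bigr)$, justified rigorously via truncation $v^k = \min(v,k)$, shows that $u := e^v$ is a positive function with $-\mathrm{div}(\mathcal{A}\nabla u)\ge\sigma u\ge 0$, hence automatically $\mathcal{A}$-superharmonic.

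\emph{Main obstacle.} The essential difficulty is the critical value $\lambda = 1$ of the upper form bound, which blocks a direct application of Theorem~\ref{gensymthm}(i) and forces the $(1-\epsilon)\sigma$ regularization. Passing to the limit sacrifices the equality in the Riccati equation and yields only an inequality (through weak lower semicontinuity of the nonlinear gradient term), but this is precisely what is needed to construct a supersolution rather than a solution; indeed, in the critical case a positive solution of the original Schr\"odinger equation need not exist, only a positive superharmonic supersolution.
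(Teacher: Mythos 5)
Your \emph{reverse} direction (supersolution $\Rightarrow$ form bound) is correct and matches what the paper dismisses as the ``well-known'' direction citing Fitzsimmons: the test function $h^2/(u+\delta)$, the Cauchy--Schwarz bound (\ref{symineqvec}), Young's inequality, and monotone convergence all go through. The logarithmic Caccioppoli property of nonnegative $\mathcal{A}$-supersolutions gives $\nabla u/(u+\delta)\in L^2_{\text{loc}}$, which is precisely what is needed to make $h^2/(u+\delta)$ admissible and to put each term of $\mathcal{A}\nabla u\cdot\nabla(h^2/(u+\delta))$ in $L^1_{\text{loc}}$. No objection here.

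Your \emph{forward} direction, however, takes a genuinely different route from the paper's, and it is precisely at the last step that a real gap appears. The paper never leaves the class of $\mathcal{A}$-superharmonic functions: it notes the approximating $u_j$ are nonnegative Laplace-supersolutions (since $\lambda_j\sigma\ge 0$) and hence superharmonic, proves uniform $W^{1,q}_{\text{loc}}$ bounds only for $q<n/(n-1)$ via Moser's estimate $\int_{B(x,r)}|\nabla u_j|^q\,dx\le Cr^{n-q}\inf_{B(x,r)}u_j$, and then invokes the fundamental convergence theorem for superharmonic functions (to get $u=\liminf u_j$ q.e.), Fatou's lemma (legitimate because a $\sigma$ obeying (\ref{positive1}) does not charge sets of capacity zero), and weak convergence of the Riesz measures. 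That machinery is tailored to the fact that in the critical case the limit $u$ need not lie in $L^{1,2}_{\text{loc}}$ --- compare $u=|x|^{(2-n)/2}$ for $\sigma=\frac{(n-2)^2}{4}|x|^{-2}$.

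You instead pass to $v_\epsilon=\log u_\epsilon$, extract a weak $W^{1,2}_{\text{loc}}$ limit $v$, obtain the Riccati inequality $-\mathrm{div}(\mathcal{A}\nabla v)\ge\sigma+\mathcal{A}\nabla v\cdot\nabla v$ (this much is fine, and weak lower semicontinuity of $w\mapsto\int\varphi\,\mathcal{A}\nabla w\cdot\nabla w$ is correctly invoked, using the symmetry of $\mathcal{A}$), and then set $u=e^v$. The trouble is the exponentiation. Testing the Riccati inequality against $e^{v^k}\varphi$ with $v^k=\min(v,k)$ and $\varphi\in C_0^\infty$, $\varphi\ge0$, one obtains after the usual manipulations
\begin{equation*}
\int_\Omega \mathcal{A}\nabla(e^{v^k})\cdot\nabla\varphi\,dx \;+\; e^k\int_{\{v\ge k\}}\mathcal{A}\nabla v\cdot\nabla\varphi\,dx \;\ge\; \int_\Omega e^{v^k}\varphi\,d\sigma,
\end{equation*}
and the ``boundary'' term $e^k\int_{\{v\ge k\}}\mathcal{A}\nabla v\cdot\nabla\varphi\,dx$ is not evidently negligible. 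Estimating it by Cauchy--Schwarz and Chebyshev yields a bound $\lesssim e^k\bigl(e^{-p_0 k}\int_K e^{p_0 v}\bigr)^{1/2}\|\nabla v\|_{L^2(K)}$, which decays only if $e^v\in L^{p_0}_{\text{loc}}$ for some $p_0>2$. But $e^v$ need not have that much local integrability: the John--Nirenberg exponent one gets from the uniform logarithmic Caccioppoli constant is not quantitatively $>2$, and Moser's $L^p$ bound for superharmonic functions gives $e^v\in L^{p}_{\text{loc}}$ only for $p<n/(n-2)$, which is $\le 2$ as soon as $n\ge 4$. So the truncation as described does not close, and the identity $-\mathrm{div}(\mathcal{A}\nabla e^v)\ge\sigma e^v$ is not established in the generality you need. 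The further claim that $e^v$ is ``automatically $\mathcal{A}$-superharmonic'' also deserves care: $e^v$ is not in $L^{1,2}_{\text{loc}}$ in the critical case, so one cannot simply invoke the weak-supersolution framework of \cite{HKM}; the paper sidesteps this by keeping all the approximants superharmonic and taking the limit inside that class. To fix your argument you would either need to establish quantitatively strong local integrability of $e^v$ (not available in general), or, more in line with the paper, abandon the exponentiation and pass to the limit directly in the approximating Schr\"odinger equations, exploiting the superharmonicity of the $u_\epsilon$'s rather than of $e^v$.
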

In Proposition \ref{critpropintro}, the notion of superharmonicity is the one associated to the operator $\mathcal{A}$, see e.g. \cite{HKM}.  This is proved in Section \ref{criticalcase} below.

In Section \ref{FNVsec}, we consider a recent result of Frazier, Nazarov and the third author \cite{FNV} on
positive solutions with prescribed boundary values.  Let us recall one of their main theorems.  Suppose $\Omega$ is a bounded NTA domain, and that $\sigma$ is a nonnegative measure in $\Omega$.  Then, under precise necessary and sufficient conditions on $\sigma$ up to the boundary,  a positive minimal solution $u$ (called the \textit{gauge} in the probabilistic literature, see e.g. \cite{CZ95}) to the  equation
\begin{equation}\label{veryweaksoln}\begin{cases}
-\Delta u = \sigma u \quad \text{ in }\Omega,\\
\,u=1 \quad  \text{ on }\partial\Omega,
\end{cases}\end{equation}
is constructed in \cite{FNV}   (see Theorem \ref{FNV} below).  The solution is understood in the sense that
$$u (x)  = \int_\Omega G(x, y) \,  u (y) \, d \sigma(y) +1,$$ where  $G(x,y)$ is Green's function
of the Laplacian.
In this paper, we will adapt the approach taken in the proof of Theorem \ref{gensymthm} to show that in fact $u \in L^{1,2}_{\text{loc}}(\Omega)$; see Theorem \ref{u1reg} below.  This regularity is again optimal under the assumptions  of the theorem.

%In Section \ref{regularitybit}, we conclude the paper with a well known example to demonstrate the sharpness of Theorem \ref{gensymthm}.
%\subsection{}The plan of this paper is as follows.  In Section \ref{possec}, we will study the $L^p$ inequality (\ref{pemb}) when $\sigma\in L^1_{\text{loc}}(\Omega)$, $\sigma\geq0$, here Theorem \ref{positivethm} will be proved.  In Section \ref{dissec}, the methods of Theorem \ref{positivethm} are refined to prove Theorem \ref{introlinearthm}.  The subsequent two sections are then devoted to applications.  In Section \ref{formbdsec} we recover the characterization of (\ref{imbintro}) first proved in \cite{MV1}.  Finally, in Section \ref{FNVsec} we apply the technique developed to prove Theorem \ref{introlinearthm} to the results of the paper \cite{FNV}.

In conclusion, we remark that our approach outlined above is nonlinear in nature, and an extension  to general quasilinear operators of $p$-Laplacian type will be presented in a forthcoming paper \cite{JMV10a}, where the $L^p$-analogue of (\ref{imbintro}) will be characterized.

\subsection{Acknowledgement}  We would like to thank Yehuda Pinchover for suggesting that 
Theorem \ref{refine} should require only \textit{local} ellipticity and boundedness conditions on the operator $\mathcal{A}$.

\section{Preliminaries}\label{Preliminaries}

\subsection{Notation and function spaces}\label{notation} For an open set
$\Omega \subseteq \mathbf{R}^{n}$, $n \ge 1$, we denote by $C^{\infty}_0(\Omega)$ the space of smooth functions with compact support in $\Omega$.  The energy space $\Lo(\Omega)$ is then the completion of $C^{\infty}_0(\Omega)$ with respect to the Dirichlet norm $\norm{\nabla h}_{L^2(\Omega)}$.  The majority of estimates in this paper are local; we say that $h \in \smash{L^{1,2}_{\text{loc}}}(\Omega)$ if $h\varphi\in \Lo(\Omega)$ whenever $\varphi \in C^{\infty}_0(\Omega)$.

For test function arguments it will be useful to introduce the space $L^{1,2}_c(\Omega)$.  We say that $h\in L^{1,2}_c(\Omega)$ if $h\in \Lo(\Omega)$ has compact support.

Define $L^{-1, 2}(\Omega)$ to be the dual of $\Lo(\Omega)$.  Then a distribution $\sigma \in L^{-1,2}_{\text{loc}}(\Omega)$ if $\varphi \sigma \in L^{-1,2}(\Omega)$ for any $\varphi\in C^{\infty}_0(\Omega)$.

We will write $V\subset\subset U$, for two open sets $U, V\subset \mathbf{R}^n$, if there exists a compact set $K\subset \mathbf{R}^n$ so that $V\subset K\subset U$.

Throughout the paper,  we use the usual notation for the integral average:
$$\dashint_E \cdots \, dx = \frac{1}{|E|}\int_E \cdots \,  dx.
$$
For an open set $U$, we say $u\in BMO(U)$ if there is a positive constant $D_U$ so that
\begin{equation}\label{BMOdef}\dashint_{B(x,r)} |u(y)-\dashint_{B(x,r)} u(z) \,dz|^2 dy \leq D_U, \text{ for any ball }B(x,2r)\subset U.
\end{equation}
In addition, $u\in BMO_{\text{loc}}(\Omega)$ if for each compactly supported open set $U\subset\subset\Omega$, there is a positive constant $D_U>0$ so that (\ref{BMOdef}) holds.

Let us next introduce the \textit{local Morrey space}: we say $f\in \mathcal{L}^{p,q}_{\text{loc}}(\Omega)$ if, for each compactly supported  set $U\subset\subset\Omega$, there exists a constant $C_U$ so that
\begin{equation}\label{locmordef}
\int_{B(z,s)}|f|^p \, dx\leq C_U s^{q}, \quad \text{ for all balls }B(z,2s)\subset U.
\end{equation}
We conclude with the definition of a multiplier (see \cite{MSh09}).  Let $X$ and $Y$ be two normed function spaces, and let $Z$ be a dense subset of $X$.  We say that $g$ is a multiplier from $X$ to $Y$, written as $g\in M(X\rightarrow Y)$, if $g\cdot f\in Y$ for all $f\in Z$, and there is a positive constant $C>0$ so that the following inequality holds:
$$\norm{g \cdot f}_Y\leq C \norm{f}_X, \quad \text{ for all }f\in Z.$$
In what follows $X$ and $Y$ will be $\Lo(\Omega)$ and $L^2(\Omega)$ respectively, and $Z$ will be $C^{\infty}_0(\Omega)$.

\subsection{On weak reverse H\"{o}lder inequalities and BMO}\label{wrhsec} In this section we characterize the weak reverse H\"{o}lder weights that are doubling.  This forms a key tool in our argument.  First, let us introduce some notation.

\begin{defn}  Let $U\subset\mathbf{R}^n$ be an open set, and let $w$ be a nonnegative measurable function.  Then $w$ is said to be \textit{doubling} in $U$ if there exists a constant $A_U>0$ so that,
\begin{equation}\label{ld}\dashint_{B(x,2r)} w\, dx \leq A_U\dashint_{B(x,r)} w \, dx, \text{ for all balls }B(x,4r)\subset U.
\end{equation}
Let $w$ be a nonnegative measurable function.  Then $w$ is said to satisfy a \textit{weak reverse H\"{o}lder inequality} in $U$ if there exists constants $q>1$ and $B_U>0$ so that,
\begin{equation}\label{wrh}\Bigl(\dashint_{B(x,r)} w^q dx\Bigl)^{1/q} \leq B_U\dashint_{B(x,2r)} w\,dx, \text{ for all balls }B(x,2r)\subset U.
\end{equation}
\end{defn}

\begin{rem}\label{chainarg}  The following simple consequence of the doubling property will prove useful.  Let $U$ be an open set, and suppose $w$ is doubling in $U$.  Then, whenever $B(x,4r)\subset U$ and $z\in B(x,r)$ with $B(z,4s)\subset U$, we have
$$\dashint_{B(x,r)} w(y) dy \leq C(A_U, s, r) \dashint_{B(z,s)} w(z) dz.
$$
This principle will be used in a Harnack chain argument in Proposition \ref{mainprop}.
\end{rem}

Our argument hinges on the following result:

\begin{prop} \label{locdoub} Let $U$ be an open set, and suppose $w$ satisfies the weak reverse H\"{o}lder inequality (\ref{wrh}) in $U$.  Then $w$ is doubling in $U$, i.e. (\ref{ld}) holds, if and only if $\log(w) \in BMO(U)$ (see (\ref{BMOdef})).

In particular, if $w$ satisfies (\ref{wrh}) and
\begin{equation}\label{bmo1}
\dashint_{B(x,s)} \!\!| \log w(y) - \dashint_{B(x,s)} \!\!\log w(z) dz|^2 dy \leq D_U,
\end{equation}
for all balls $B(x,2s)\subset U$.  Then there is a constant $C(B_U, D_U)>0$, so that for any ball $B(x,4r)\subset U$
\begin{equation}\label{doub}
\dashint_{B(x,2r)} w\, dx \leq C(B_U, D_U) \dashint_{B(x,r)} w\, dx,
\end{equation}
where $B_U$ is the constant from (\ref{wrh}).
\end{prop}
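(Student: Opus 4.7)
The plan is to prove the quantitative second half of the proposition, from which both directions of the equivalence follow.  Fix a ball $B(x,4r)\subset U$ and abbreviate $B_\rho := B(x,\rho)$.  Since every sub-ball $B' \subset B_{2r}$ satisfies $2B' \subset B_{4r}\subset U$, the $L^2$-BMO hypothesis (\ref{bmo1}) holds on all such $B'$, and the John--Nirenberg inequality gives exponential integrability on $B_{2r}$: for $\eta>0$ with $\eta\sqrt{D_U}$ smaller than a dimensional constant,
\begin{equation*}
\dashint_{B_{2r}} w^{\pm\eta}\,dy \;\le\; C_1(D_U,\eta)\,\exp\bigl(\pm\eta\,(\log w)_{B_{2r}}\bigr).
\end{equation*}
Combined with Jensen's inequality on $B_r$ and the standard BMO comparison $|(\log w)_{B_r}-(\log w)_{B_{2r}}|\le 2^n\sqrt{D_U}$, this yields the lower bound $\dashint_{B_r} w \ge c_2(D_U)\exp((\log w)_{B_{2r}})$.

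The central step is a H\"older interpolation between $L^\eta$ and $L^q$ means on $B_{2r}$: with $a=(q-1)/(q-\eta)$, $b=(1-\eta)/(q-\eta)$ satisfying $a+b=1$ and $\eta a + q b = 1$,
\begin{equation*}
\dashint_{B_{2r}} w \;\le\; \Bigl(\dashint_{B_{2r}} w^\eta\Bigr)^{a}\Bigl(\dashint_{B_{2r}} w^q\Bigr)^{b}.
\end{equation*}
Bounding the first factor by John--Nirenberg and the second by the weak reverse H\"older hypothesis $(w^q)_{B_{2r}}^{1/q}\le B_U(w)_{B_{4r}}$, and writing $M(B):=\dashint_B w\,/\exp((\log w)_B)$, the manipulation yields
\begin{equation*}
M(B_{2r}) \;\le\; C_3(B_U,D_U,\eta)\,M(B_{4r})^{\theta},\qquad \theta = \tfrac{q(1-\eta)}{q-\eta}\in(0,1).
\end{equation*}
Applying the same interpolation at every smaller concentric scale $2^{-k}r$ (always allowed since $B_{4\cdot 2^{-k}r}\subset B_{4r}\subset U$) gives the iterated estimate $M(B_{2^{-k+1}r})\le C_3^{(1-\theta^{k+1})/(1-\theta)}M(B_{2r})^{\theta^{k+1}}$.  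Since $w\in L^q_{\mathrm{loc}}(U)$ by weak reverse H\"older, Lebesgue differentiation gives $M(B(x,s))\to 1$ as $s\to 0$ for almost every $x$, and a bootstrap/absorption step then extracts $M(B_{2r})\le C_4(B_U,D_U)$.  Combined with the lower bound from the first paragraph this produces (\ref{doub}).

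For the converse direction (doubling $\Rightarrow \log w\in BMO$), assuming doubling with constant $A_U$, the weak reverse H\"older upgrades to a strong reverse H\"older estimate $(w^q)_B^{1/q}\le A_U B_U(w)_B$, placing $w$ in the Muckenhoupt $A_\infty$ class on $U$; the classical Coifman--Fefferman theorem then yields $\log w\in BMO(U)$ with norm depending on $A_U$ and $B_U$.

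The main obstacle is the closing bootstrap.  The H\"older interpolation naturally places $\dashint_{B_{4r}}w$ on the right-hand side, which cannot be re-absorbed by iterating upward in scale because the hypothesis $B(x,4r)\subset U$ provides only one level of outer room.  The correct strategy is instead to exploit the contractive exponent $\theta<1$ by iterating downward, using Lebesgue differentiation to pin down the limit of $M$ at infinitesimal scales; this delicate absorption is what makes Proposition \ref{locdoub} a nontrivial hard-analysis tool and not an immediate consequence of classical $A_\infty$ theory.
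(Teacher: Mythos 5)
Your computations through the base estimate are correct: the lower bound via Jensen and the BMO comparison, the H\"older split with $a+b=1$, $\eta a+qb=1$, and the resulting inequality $M(B(x,2r))\leq C_3\,M(B(x,4r))^{\theta}$ with $\theta=q(1-\eta)/(q-\eta)\in(0,1)$ are all valid, and the converse direction (doubling $\Rightarrow$ strong reverse H\"older $\Rightarrow A_\infty\Rightarrow\log w\in BMO$) matches the paper. The gap is that the proposed absorption does not close. Iterating the base estimate downward in scale ($s=2^{-k}r$) produces $M(B(x,2^{-k+1}r))\leq C_3^{(1-\theta^k)/(1-\theta)}\,M(B(x,2r))^{\theta^k}$. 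Letting $k\to\infty$, the left side tends to $1$ by Lebesgue differentiation, but the right side \emph{also} tends to $C_3^{1/(1-\theta)}$ regardless of the value of $M(B(x,2r))$, because $\theta^k\to 0$. The limiting inequality $1\leq C_3^{1/(1-\theta)}$ is trivially true and carries no information about $M(B(x,2r))$. A contractive exponent is useful only when the unknown quantity sits on the left with a shrinking exponent; in your recurrence $M(B(x,2r))$ sits on the right, so the downward iteration pins down the wrong endpoint and there is nothing to bootstrap.

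The paper closes by a different mechanism which you may want to compare. From (\ref{bmo1}), John--Nirenberg gives that $w^t$ is an $A_2$ weight for some small $t>0$, i.e.\ $\dashint_B w^t\cdot\dashint_B w^{-t}\leq A$; this is a genuinely two-sided statement (it uses $w^{-t}$), and together with Jensen it yields doubling of $w^t$ \emph{directly}, with no iteration at all. Lemma~\ref{reverseholder} --- the Gehring-type self-improvement of the weak reverse H\"older inequality, which replaces the $L^1$ average on its right side by an $L^t$ average for any $t>0$ --- then converts doubling of $w^t$ into doubling of $w$: $\dashint_{B(z,2s)}w\leq C_t\bigl(\dashint_{B(z,4s)}w^t\bigr)^{1/t}\leq C\bigl(\dashint_{B(z,s)}w^t\bigr)^{1/t}\leq C\dashint_{B(z,s)}w$ whenever $B(z,8s)\subset U$, followed by a covering argument to relax $8s$ to $4s$. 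You already record both halves $\dashint_B w^{\pm\eta}\leq C\exp(\pm\eta(\log w)_B)$; multiplying them yields exactly the $A_2$ condition for $w^\eta$, but your interpolation uses only the $+\eta$ side. A single H\"older split cannot substitute for Lemma~\ref{reverseholder}: to close by your recurrence you would need the reversed inequality $M(B(x,4r))\leq C\,M(B(x,2r))^{\theta}$ (big ball controlled by small), which the weak reverse H\"older inequality does not provide.
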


Only the the sufficiency direction is required in what follows; however, since this characterization does not seem to appear explicitly in the literature we prove the full statement.  To prove Proposition \ref{locdoub}, we use the following lemma:

\begin{lem} \label{reverseholder} Let $U\subset \mathbf{R}^n$ be an open set.  Suppose that there exist $s>1$ and $w\geq 0$, along with a constant $C_1>0$ so that the following inequality holds:
$$\Bigl(\dashint_{B(x,r)} w^{s} \, dx\Bigl)^{1/s} \leq C_1 \dashint_{B(x, 2r)} w\,dx, \text{ whenever } B(x, 2r)\subset U.
$$
Then, for any $t>0$, there exists a constant $C_t=C(t, C_1)>0$ so that
$$\Bigl(\dashint_{B(x,r)} w^{s} \, dx\Bigl)^{1/s} \leq C_t \Bigl(\dashint_{B(x, 2r)} w^t\,dx\Bigl)^{1/t}, \quad \text{ whenever } B(x, 2r)\subset U.
$$
\end{lem}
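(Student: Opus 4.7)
The plan is to split into two cases according to whether $t \geq 1$ or $0 < t < 1$.

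For $t \geq 1$, Jensen's (equivalently H\"older's) inequality gives $\dashint_{B(x, 2r)} w \leq \bigl(\dashint_{B(x, 2r)} w^t\bigr)^{1/t}$, so the conclusion follows immediately from the hypothesis with $C_t = C_1$.

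For $0 < t < 1$, the essential ingredient is an interpolation coming from H\"older's inequality. Choose $\alpha = (s-1)/(s-t) \in (0,1)$ so that $\alpha t + (1-\alpha) s = 1$. Applying H\"older's inequality with conjugate exponents $1/\alpha$ and $1/(1-\alpha)$ to the decomposition $w = w^{\alpha t} \cdot w^{(1-\alpha) s}$ yields, on any ball $B$,
\[ \dashint_B w \leq \Bigl(\dashint_B w^t\Bigr)^\alpha \Bigl(\dashint_B w^s\Bigr)^{1-\alpha}. \]
Using this on $B(x,2r)$ in the hypothesis of the lemma, we obtain, for any $B(x,r)$ with $B(x,2r)\subset U$,
\[ \Bigl(\dashint_{B(x, r)} w^s\Bigr)^{1/s} \leq C_1 \Bigl(\dashint_{B(x, 2r)} w^t\Bigr)^\alpha \Bigl(\dashint_{B(x, 2r)} w^s\Bigr)^{1-\alpha}. \]
The key structural point is that the residual $L^s$-average on the right carries the strictly subunit exponent $1-\alpha = (1-t)/(s-t) \in (0,1)$ (equivalently, when rewritten in terms of $L^s$-norms, the exponent is $\eta := s(1-\alpha) = s(1-t)/(s-t)$, which is also strictly less than $1$ since $s>1$).

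To eliminate this residual $L^s$-term one invokes the standard Gehring-style self-improvement machinery for reverse H\"older inequalities (see, e.g., Giaquinta's \emph{Multiple Integrals}, Ch.~V, or Iwaniec): via a level-set / Calder\'on--Zygmund decomposition one bootstraps the interpolated estimate to the clean bound with the desired $L^t$-average on the right-hand side, producing $C_t = C(t,s,n,C_1)$. An equivalent viewpoint is that the hypothesis places $w$ locally into the Muckenhoupt class $A_\infty$ (uniformly over balls with doubles in $U$), and the improvement of the reverse H\"older inequality to \emph{any} exponent on the right is a general property of $A_\infty$ weights.

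\textbf{Main obstacle.} The delicate part is the self-improvement bootstrap; a naive covering/iteration (covering $B(x,r)$ by small balls whose doubles lie in $B(x,2r)$ and iterating along a chain of concentric balls) seems to close because $\eta<1$ is contractive, but must be executed so that the geometric constants from the Vitali covers are not lost. The cleanest execution is via a level-set argument on $\{w>\lambda\}$ that exploits the subunit exponent $1-\alpha$ to sum a geometric series in $\lambda$; this is the standard Gehring technique, and avoids the blow-up of cover-cardinality factors in a naive nested iteration.
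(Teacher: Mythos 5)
Your interpolation step is the right first move ($t\ge1$ is trivial, and for $0<t<1$ the H\"older split with $\alpha t+(1-\alpha)s=1$ is exactly what is needed), and you correctly identify the residual $\bigl(\dashint_{2B}w^s\bigr)^{1-\alpha}$ factor as the crux. But neither mechanism you propose for disposing of it is sound. The ``$A_\infty$ viewpoint'' is false: a \emph{weak} reverse H\"older inequality, with the \emph{doubled} ball on the right, does \emph{not} imply $A_\infty$, because it does not imply doubling. For instance $w(x)=e^{x}$ on $\mathbf{R}$ (or $w=\chi_{\mathbf{R}\setminus[0,1]}$) satisfies a weak reverse H\"older inequality on all of $\mathbf{R}$ and is not doubling, hence not in $A_\infty$. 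Indeed, the entire point of Proposition~\ref{locdoub}, which this lemma feeds into, is that a weak reverse H\"older weight is doubling precisely when $\log w\in BMO$; were your $A_\infty$ claim correct, that proposition would be vacuous. The appeal to Gehring/Calder\'on--Zygmund machinery is also misdirected: that argument raises the exponent on the \emph{left} (from $s$ to $s+\varepsilon$), not the one on the right, and the level-set decay estimate it produces is not what is needed here.

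What you dismiss as a ``naive covering/iteration'' is in fact the correct and standard argument, and your worry about cover-cardinality blow-up is unfounded. After your interpolated bound, for $r\le\rho<\rho'\le2r$ cover $B(x,\rho)$ by balls $B_i$ of radius $\sim(\rho'-\rho)$ with $2B_i\subset B(x,\rho')$ and bounded overlap, and on each $B_i$ apply Young's inequality with conjugate exponents $1/\eta$, $1/(1-\eta)$ (where $\eta=s(1-\alpha)=s(1-t)/(s-t)<1$ since $s>1$) to detach a small multiple $\varepsilon$ of $\int_{2B_i}w^s\,dx$. Summing over the cover, and choosing $\varepsilon$ to beat the fixed overlap constant, gives
$$
\int_{B(x,\rho)}w^s\,dx\ \le\ \frac{1}{2}\int_{B(x,\rho')}w^s\,dx\ +\ C\,r^{n}\,(\rho'-\rho)^{-ns/t}\Bigl(\int_{B(x,2r)}w^t\,dx\Bigr)^{s/t};
$$
the cardinality factor $\sim(r/(\rho'-\rho))^n$ is already absorbed into the second term. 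The standard hole-filling iteration lemma along a geometric sequence $\rho_j\uparrow 2r$ then yields $\int_{B(x,r)}w^s\,dx\le C\,r^{\,n(1-s/t)}\bigl(\int_{B(x,2r)}w^t\,dx\bigr)^{s/t}$, which, after normalizing by the ball volumes, is the stated conclusion. Note that $\eta<1$ is exploited through Young's inequality applied to the interpolated estimate and then the iteration in radius, not through a geometric series over sublevel sets of $w$; that part of your proposal conflates two different techniques. (For the record, the paper itself does not give a proof but cites Giusti, Remark~6.12, which is this iteration argument.)
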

This lemma had been used in proving estimates for quasilinear equations by G. Mingione \cite{Min07}.  A proof can be found in Remark 6.12 of \cite{Giu03}.   Let us now turn to proving the proposition.

\begin{proof}[Proof of Proposition \ref{locdoub}]  Let us first prove the necessity, suppose that $w$ satisfies the weak reverse H\"{o}lder inequality (\ref{wrh}), and in addition that $w$ is doubling in $U$.  Then, for each ball $B(x,4r)\subset U$, we have
$$\Bigl(\dashint_{B(x,r)} w^q dx\Bigl)^{1/q}\leq B_U \dashint_{B(x,2r)} w\, dx \leq  A_UB_U \dashint_{B(x,r)} w \, dx.
$$
It follows that $w$ satisfies a reverse H\"{o}lder inequality in $U$, and is therefore a Muckenhoupt $A_{\infty}$-weight.  It follows (see Chapter 5 of \cite{St93}) that $\log(u)\in BMO(U)$.

Let us now turn to the converse statement.  Suppose $w$ satisfies (\ref{bmo1}) and (\ref{wrh}).  From (\ref{bmo1}), it is a well known consequence of the John-Nirenberg inequality that there exists a constant $0<t\leq1$ so that $w^t$ is an $A_2$-weight in $U$, i.e. there exists a positive constant $A>0$ (depending on $D_U$ in (\ref{bmo1})) so that for all balls $B(z,2s)\subset U$,
\begin{equation}\label{a2def}
\dashint_{B(z,s)} w^t \, dx \leq A \Bigl(\dashint_{B(z,s)} w^{-t} \, dx\Bigl)^{-1}.
\end{equation}
Indeed, the John-Nirenberg inequality (see \cite{St93}) yields $t=t(D_U)>0$ so that
\begin{equation}\label{jnineq}
\dashint_{B(z,s)} \exp\Bigl(t\Bigl|\log(w)(y') - \dashint_{B(z,s)} \log(w(y)) dy\Bigl|\Bigl)dy' \leq C(D_U).
\end{equation}
The inequality (\ref{jnineq}) contains two inequalities:
$$\dashint_{B(z,s)}\exp\Bigl(\log(w^t(y')) - \dashint_{B(z,s)} \log(w^t(y)) dy\Bigl) dy' \leq C(D_U), \text{ and}
$$
$$\dashint_{B(z,s)}\exp\Bigl( \log(w^{-t}(y')) + \dashint_{B(z,s)} \log(w^t(y)) dy\Bigl) dy' \leq C(D_U).
$$
Multiplying these two inequalities together, one obtains (\ref{a2def}).

Combining (\ref{a2def}) with Jensen's inequality, we see that if $B(z,4s)\subset U$, then
\begin{equation}\label{a2doub}
\dashint_{B(z,2s)} w^t \, dx \leq A2^n (\dashint_{B(z,s)} w^{-t} \,  dx\Bigl)^{-1}\leq A2^n \dashint_{B(z,s)} w^t \, dx.
\end{equation}
Let $B(z,8s)\subset U$, then applying Lemma \ref{reverseholder} yields
\begin{equation}\begin{split}\nonumber\dashint_{B(z,2s)} w\, dx\leq C_{U,t} \Bigl(\dashint_{B(z,4s)} w^t \, dx\Bigl)^{1/t} &\leq \tilde{C}_{U,t}\Bigl(\dashint_{B(z,s)} w^t \, dx\Bigl)^{1/t} \\
&\leq \tilde{C}_{U,t}\dashint_{B(z,s)} w\, dx.
\end{split}\end{equation}
The second inequality in the chain follows from the doubling of $w^t$, and the last inequality follows from H\"{o}lder's inequality.  By a standard covering argument, the factor of $8$ in the enlargement of the ball can be replaced by $4$, which yields (\ref{doub}).  This completes the proposition.
\end{proof}

\subsection{Preliminaries for distributional potentials $\sigma$}\label{prelimdist}
Let $\Omega$ be an open set in $\mathbf{R}^n$, with $n\geq 1$.  Let $\mathcal{A}: \Omega \rightarrow \mathbf{R}^{n\times n}$ satisfying the \textit{local conditions} (\ref{locelliptic}).  For a real-valued distribution $\sigma$ defined on $\Omega$, we define the multiplication operator by
$$ \langle \sigma h, h\rangle :=  \langle \sigma , h^2\rangle, \text{ for all } h\in C^{\infty}_0(\Omega).
$$
Suppose now $\sigma$ satisfies (\ref{imbintro}) for a positive constant $C>0$, and let us write:
$$||h||_{\mathcal{A}} = \Bigl(\int (\mathcal{A}\nabla h)\cdot\nabla h dx\Bigl)^{1/2}, \text{ for } h\in C^{\infty}_0(\Omega) $$
By polarization, we see that (\ref{imbintro}) is equivalent to the inequality:
\begin{equation}\label{sesqform}
|\langle \sigma g, h\rangle|\leq C||g||_{\mathcal{A}}||h||_{\mathcal{A}} \text{ for all } g,h \in C^{\infty}_0(\Omega),
\end{equation}
with $C>0$ the same constant that appears in (\ref{imbintro}).  Furthermore, by the local boundedness assumption (\ref{locelliptic}) on the operator $\mathcal{A}$, we can extend (\ref{sesqform}) by continuity, so that (\ref{sesqform}) is valid for all $g,h \in \Lo(U)$ whenever $U\subset\subset \Omega$.  We denote this extension again by $\sigma$.

We now state some simple lemmas regarding the local character of the distributional potentials we will consider.  Let us begin with an alternative way of stating the local condition (\ref{sigmalocmor}):

\begin{lem}\label{locmorlocsobsame}
Suppose that $n\geq 3$ and $\sigma \in \mathcal{D}'(\Omega)$. If $\sigma$ satisfies (\ref{sigmalocmor}) then the following condition holds:
\begin{equation}\label{locsobball}\begin{split}&\text{For each open set }U\subset\subset \Omega,\text{ there exists a constant }C_U>0,\\
&\text{such that whenever } B(z,2s)\subset U:\\
&|\langle h, \sigma\rangle |\leq C_U s^{(n-2)/2}||\nabla h||_{L^2(B(z,s))}, \text{ for all }h\in C^{\infty}_0(B(z,s)).
\end{split}\end{equation}
\end{lem}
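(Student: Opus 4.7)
The statement is essentially a duality computation, so my plan is to unpack the distributional divergence against a test function $h \in C^{\infty}_0(B(z,s))$ and apply Cauchy--Schwarz, with the Morrey growth of $\vec G$ supplying the correct $s^{(n-2)/2}$ scaling.

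First, I would fix $U \subset\subset \Omega$ and a ball $B(z,2s) \subset U$. For $h \in C^{\infty}_0(B(z,s))$, the distributional identity
\[
\langle h, \sigma \rangle = \langle h, \mathrm{div}(\vec G)\rangle = -\int_{B(z,s)} \vec G \cdot \nabla h \, dx
\]
is available because $\vec G \in L^2_{\mathrm{loc}}(\Omega)^n$ (the Morrey membership implies at least local square integrability), so the pairing reduces to an honest integral over $B(z,s)$. The integration by parts produces no boundary term since $h$ has compact support in $B(z,s)$.

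Second, Cauchy--Schwarz on $B(z,s)$ gives
\[
|\langle h, \sigma\rangle| \le \|\vec G\|_{L^2(B(z,s))} \, \|\nabla h\|_{L^2(B(z,s))}.
\]
Finally, I would invoke the local Morrey bound (\ref{locmordef}) with $p=2$, $q = n-2$: since $B(z,2s) \subset U$, the assumption $\vec G \in \mathcal{L}^{2,n-2}_{\mathrm{loc}}(\Omega)^n$ yields a constant $C_U > 0$ (depending only on $U$ and $\vec G$) with
\[
\int_{B(z,s)} |\vec G|^2 \, dx \le C_U \, s^{n-2}.
\]
Combining the last two displays produces (\ref{locsobball}) with constant $C_U^{1/2}$.

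There is no real obstacle here: the argument is a one-line application of the definition of $\mathrm{div}$, Cauchy--Schwarz, and the defining bound of $\mathcal{L}^{2,n-2}_{\mathrm{loc}}$. The only point requiring a moment's care is matching the ball $B(z,2s) \subset U$ required in both hypotheses (the one in (\ref{sigmalocmor}) via (\ref{locmordef}) and the one in the conclusion (\ref{locsobball})), which is automatic since the conclusion asks exactly for this containment.
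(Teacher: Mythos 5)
Your proof is correct and follows exactly the argument the paper has in mind: the paper simply remarks that the conclusion ``follows directly from differentiation,'' meaning the integration-by-parts identity $\langle h,\sigma\rangle = -\int \vec G\cdot\nabla h\,dx$ combined with Cauchy--Schwarz and the Morrey bound, which is precisely what you spelled out.
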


The two conditions (\ref{locsobball}) and (\ref{sigmalocmor}) are in fact equivalent, but we omit the proof of the converse statement to Lemma \ref{locmorlocsobsame}, as we will not use it, and its proof is slightly lengthy.  To prove the lemma, note that if $\sigma = \text{div}(\vec G)$, with $|\vec G|$ satisfying (\ref{sigmalocmor}), then it follows directly from differentiation that (\ref{locsobball}) holds.

We next prove a key local property of distributions $\sigma$ satisfying (\ref{imbintro}), namely that (\ref{imbintro}) is a stronger condition than the condition (\ref{locsobball}).  We define the capacity $\text{cap}(E, \Omega)$ of a compact set $E$ by:
\begin{equation}\label{capdef}
\text{cap}(E, \Omega ) = \inf \{||\nabla h||^2_{L^2(\Omega)}\, :\, h\in C^{\infty}_0(\Omega), \, h\geq 1\text{ on }E\}.
\end{equation}
\begin{lem}\label{localsoblem}
Suppose that $\sigma$ satisfies (\ref{imbintro}), and let $U\subset\subset V\subset\subset \Omega$.  Then $\sigma \in L^{-1,2}(U)$, and
\begin{equation}\label{locadualest}
||\sigma||_{L^{-1,2}(U)} \leq M_V C\cdot{\rm{cap}}(U, V)^{1/2}.
\end{equation}
Here $M_V$ is as in (\ref{locelliptic}), and $C$ is the constant from (\ref{imbintro}).  In particular, for each ball $B(x,r)$ so that $B(x,2r)\subset U$, we have
\begin{equation}\label{embimpliesloc}||\sigma||_{L^{-1,2}(B(x,r))} \leq C_1(C, M_U)r^{(n-2)/2}.
\end{equation}
\end{lem}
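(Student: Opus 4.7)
The plan is to bound $\langle \sigma, h\rangle$ for $h \in C^\infty_0(U)$ by inserting an auxiliary cutoff that absorbs the compact support of $h$ and exposes the capacity of $U$ in $V$. Pick any $\eta \in C^\infty_0(V)$ with $\eta \equiv 1$ on a neighborhood of $\bar U$; then $h = h\eta$ pointwise, and, using the polarized sesquilinear form extension of $\sigma$ described in Section \ref{prelimdist}, I would write
$$\langle \sigma, h\rangle = \langle \sigma h, \eta\rangle.$$
Applying the polarization inequality (\ref{sesqform}) (valid on $L_0^{1,2}(V) \times L_0^{1,2}(V)$ by the extension argument following (\ref{sesqform})) gives $|\langle \sigma h, \eta\rangle| \leq C\,\|h\|_{\mathcal{A}}\,\|\eta\|_{\mathcal{A}}$, while the local upper bound in (\ref{locelliptic}) yields $\|f\|_{\mathcal{A}}^2 \leq M_V\,\|\nabla f\|_{L^2(V)}^2$ for every $f \in L^{1,2}_0(V)$. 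Combining these, I obtain
$$|\langle \sigma, h\rangle| \leq C\, M_V\, \|\nabla h\|_{L^2(U)}\, \|\nabla \eta\|_{L^2(V)}.$$

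Next I would minimize over $\eta$. Since one can truncate $\eta$ by $\min(\eta,1)^+$ without increasing $\|\nabla \eta\|_{L^2}$, the admissible cutoffs above furnish exactly the class of competitors in (\ref{capdef}) for the capacity of $\bar U$ in $V$. Taking the infimum gives $\inf_\eta \|\nabla \eta\|_{L^2(V)} = \text{cap}(U,V)^{1/2}$, which, combined with the density of $C^\infty_0(U)$ in $L^{1,2}_0(U)$, yields $\sigma \in L^{-1,2}(U)$ with the estimate (\ref{locadualest}).

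For the ball statement (\ref{embimpliesloc}), I would apply the bound just proved with $U$ replaced by $B(x,r)$ and $V$ replaced by $B(x,2r) \subset U$, so that $M_{B(x,2r)} \leq M_U$. The explicit radial cutoff $\eta(y)=\max\!\bigl(0,\min(1,\,2-|y-x|/r)\bigr)$ gives $|\nabla \eta| \leq 1/r$ on the annulus $B(x,2r)\setminus B(x,r)$, hence $\text{cap}(B(x,r),B(x,2r)) \leq C_n r^{n-2}$. Substituting into (\ref{locadualest}) produces (\ref{embimpliesloc}) with $C_1 = C \cdot M_U \cdot C_n^{1/2}$.

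The argument is largely bookkeeping once one has the polarized sesquilinear extension of $\sigma$ from Section \ref{prelimdist}; the only delicate point is legitimizing the identity $\langle \sigma, h\rangle = \langle \sigma h,\eta\rangle$ when $h\eta = h$, and this follows from the continuity of the extension together with approximation of $h$ by smooth compactly supported functions. No compactness or positivity of $\sigma$ is needed, only form boundedness and local ellipticity.
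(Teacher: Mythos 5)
Your proof is correct and takes essentially the same route as the paper: insert a cutoff $\eta\in C^\infty_0(V)$ equal to $1$ on $U$, rewrite $\langle\sigma,h\rangle$ via the polarized form, apply (\ref{sesqform}) together with the local boundedness $\|\cdot\|_{\mathcal{A}}\leq M_V^{1/2}\|\nabla\cdot\|_{L^2}$, and minimize over $\eta$ to expose $\operatorname{cap}(U,V)^{1/2}$. The paper writes the factorization as $\langle\sigma g,h\rangle$ rather than your $\langle\sigma h,\eta\rangle$, which is the same thing by symmetry of the polarized form, and it simply cites a reference for $\operatorname{cap}(B(x,r),B(x,2r))\lesssim r^{n-2}$ where you produce the explicit Lipschitz cutoff; these are cosmetic differences only.
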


From display (\ref{embimpliesloc}), it follows that (\ref{imbintro}) is stronger than the local condition (\ref{locsobball}).

\begin{proof}  Let $h\in C^{\infty}_0(U)$, and let $g\in C^{\infty}_0(V)$, so that $g\equiv 1$ on $U$.  Then applying (\ref{sesqform}) and (\ref{imbintro}), we obtain
$$|\langle \sigma , h\rangle| = |\langle \sigma g, h\rangle| \leq C ||g||_{\mathcal{A}}||h||_{\mathcal{A}} \leq M_VC ||\nabla g||_2||\nabla h||_2.
$$
Therefore $\sigma\in L^{-1,2}(U)$, and minimising over such $g$ yields (\ref{locadualest}) by definition of capacity.  The second estimate is a special case of (\ref{locadualest}) and follows from well known estimates for the capacity of a ball, see for example \cite{Maz85}.
\end{proof}

It will be convenient to use a mollification of the potential $\sigma$. Let us fix a smooth radial approximate identity $\varphi$, i.e. $\varphi\in C^{\infty}_0(B(0,1))$, so that $\varphi \geq 0$ on $B_{1}(0)$, with $||\varphi||_{L^1} =1$.  For $\varepsilon>0$, we denote $\varphi_{\varepsilon} = \varepsilon^{-n}\varphi(x/\varepsilon)$.  Then, denote the convolution of the distribution by $\sigma_{\varepsilon} = \varphi_{\varepsilon}*\sigma$.  The next two lemmas show that the mollification does not effect $\sigma$ in terms of form boundedness.  We will write $d\sigma_{\varepsilon} = \sigma_{\varepsilon} dx$.

\begin{lem}\label{mollem}  Let $\Omega$ be an open set, and let $V\subset\subset \Omega$.  In addition let $\varepsilon \leq d(V, \partial\Omega)/2$.  Suppose that $\sigma \in \mathcal{D}'(\Omega)$ so that (\ref{introupper}) holds for a constant $\lambda>0$.  Then the following inequality holds:
\begin{equation}\label{molconc}\int_{V} h^2 d\sigma_{\varepsilon} \leq \lambda\int_{V}(\mathcal{A}_{\varepsilon}\nabla h)\cdot \nabla h dx, \text{ for all }h\in C^{\infty}_0(V).
\end{equation}
where $\mathcal{A}_{\varepsilon}(x) = (\varphi_{\varepsilon}*\mathcal{A})(x).$
\end{lem}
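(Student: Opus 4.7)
The guiding idea is that mollification by a radial approximate identity averages the form inequality over translates. Fix $h \in C^\infty_0(V)$ and, for each $z$ in the support of $\varphi_\varepsilon$, let $h_z(y) = h(y+z)$; the hypothesis $|z|\le\varepsilon \le d(V,\partial\Omega)/2$ ensures that $h_z \in C^\infty_0(\Omega)$, so the form bound (\ref{introupper}) applies to $h_z$ and we may legitimately speak of $\langle \sigma, h_z^2\rangle$.

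First I would rewrite the left-hand side of (\ref{molconc}) using the standard identity for convolution of a distribution with a test function. Because $\varphi$ is radial, $\check{\varphi}_\varepsilon = \varphi_\varepsilon$, so
\begin{equation*}
\int_V h^2(x)\, d\sigma_\varepsilon(x) = \langle \varphi_\varepsilon * \sigma, h^2\rangle = \langle \sigma, \varphi_\varepsilon * h^2\rangle.
\end{equation*}
Next I would observe the pointwise decomposition
\begin{equation*}
(\varphi_\varepsilon * h^2)(y) = \int \varphi_\varepsilon(z)\, h^2(y-z)\, dz = \int \varphi_\varepsilon(z)\, (\tau_z h)^2(y)\, dz,
\end{equation*}
where $\tau_z h(y) = h(y-z)$, and pull $\sigma$ inside the $z$-integral to obtain
\begin{equation*}
\langle \sigma, \varphi_\varepsilon * h^2\rangle = \int \varphi_\varepsilon(z)\, \langle \sigma, (\tau_z h)^2\rangle\, dz.
\end{equation*}

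Now I would apply the hypothesis (\ref{introupper}) to each translate $\tau_z h\in C^\infty_0(\Omega)$ and perform the change of variable $u = y-z$ in the resulting integral:
\begin{equation*}
\langle \sigma, (\tau_z h)^2\rangle \le \lambda \int \mathcal{A}(y)\nabla (\tau_z h)(y)\cdot \nabla(\tau_z h)(y)\, dy = \lambda \int \mathcal{A}(u+z)\nabla h(u)\cdot \nabla h(u)\, du.
\end{equation*}
Substituting into the preceding display, interchanging order, and recognizing
\begin{equation*}
\int \varphi_\varepsilon(z)\mathcal{A}(u+z)\, dz = (\varphi_\varepsilon * \mathcal{A})(u) = \mathcal{A}_\varepsilon(u)
\end{equation*}
(using that $\varphi$ is even) yields (\ref{molconc}).

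The only nontrivial step is justifying the interchange of $\sigma$ and the $z$-integration. The map $z \mapsto (\tau_z h)^2$ is continuous from a neighborhood of $0$ into $C^\infty_0(U')$ for some $U'\subset\subset\Omega$ containing $V + \overline{B(0,\varepsilon)}$; hence $z\mapsto \langle \sigma,(\tau_z h)^2\rangle$ is continuous, and the interchange is a standard Riemann-sum/dominated-convergence argument, or alternatively follows by first establishing the identity against the smooth function $\sigma_\delta$ and letting $\delta\to 0$. I expect this measurability/Fubini step to be the only real technicality; everything else is manipulation of convolutions and direct invocation of (\ref{introupper}).
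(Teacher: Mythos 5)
Your proof is correct and takes essentially the same route as the paper's: rewrite the pairing as $\langle\sigma, \varphi_\varepsilon*h^2\rangle$, interchange $\sigma$ with the $z$-integration to express it as an average of $\langle\sigma,(\tau_z h)^2\rangle$ over translates, invoke (\ref{introupper}) on each translate (valid since $|z|\le\varepsilon\le d(V,\partial\Omega)/2$ keeps $\tau_z h$ in $C^\infty_0(\Omega)$), and then change variables to assemble $\mathcal{A}_\varepsilon=\varphi_\varepsilon*\mathcal{A}$. The only cosmetic difference is that the paper outsources the interchange step to Lemma 6.8 of Lieb--Loss, while you justify it directly by continuity of $z\mapsto(\tau_z h)^2$ into $C^\infty_0$ plus a Riemann-sum argument; both are sound.
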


\begin{proof}
Let $h\in C^{\infty}_0(V)$.  We first note that by the interchange of mollification and the distribution (see Lemma 6.8 of \cite{LL01})
$$\langle \sigma, \varphi_{\varepsilon}*h^2\rangle= \int_{B(0,\varepsilon)}\varphi_{\varepsilon}(t) \langle\sigma, h(\cdot-t)^2 \rangle dt.
$$
By elementary geometry, $h(\,\cdot-t)\in C^{\infty}_0(\Omega)$ for all $t\in B(0, \varepsilon)$, and hence
\begin{equation}\begin{split}\langle\sigma_{\varepsilon}, h^2\rangle & \leq \lambda \int_{B(0,\varepsilon)} \varphi_{\varepsilon}(t)\Bigl(\int_{\Omega}(\mathcal{A}(x)\nabla h(x-t))\cdot \nabla h(x-t) dx\Bigl)dt \\
& = \int_{\Omega}\mathcal{A}_{\varepsilon}(x)\nabla h(x)\cdot \nabla h(x) dx,
\end{split}\end{equation}
which proves the lemma.
\end{proof}

Our second mollification lemma says that if $\sigma$ satisfies the local condition (\ref{locsobball}), then so does the mollification of $\sigma$.  Let us introduce the notation $U_{\varepsilon} = \{x\in \mathbf{R}^n\, :\, \text{dist}(x,U)<\varepsilon\}$.

\begin{lem}\label{molball}  Suppose $n\geq 3$ and $\sigma \in \mathcal{D}'(\Omega)$ is a real valued distribution satisfying (\ref{locsobball}).  Let $V\subset\subset \Omega$, then if $\varepsilon < d(V, \partial\Omega/2)$, the mollified potential $\sigma_{\varepsilon}$ satisfies (\ref{locsobball}) for all open sets $U\subset\subset V$, with constant $C_{U_{\varepsilon}}$.
\end{lem}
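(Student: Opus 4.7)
The plan is to reduce the desired estimate for $\sigma_\varepsilon$ to the corresponding estimate for $\sigma$ itself via the standard adjoint identity for convolution, and then track how the mollification enlarges the supports.

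First I would fix $h \in C^{\infty}_{0}(B(z,s))$ with $B(z,2s)\subset U\subset\subset V$. Because $\varphi$ is radial (in particular even), the defining identity $\sigma_\varepsilon = \varphi_\varepsilon * \sigma$ gives
\[
\langle \sigma_\varepsilon, h\rangle = \langle \sigma, \varphi_\varepsilon * h\rangle.
\]
Set $\tilde h := \varphi_\varepsilon * h$. Then $\tilde h \in C^{\infty}_{0}(B(z,s+\varepsilon))$, and by Young's inequality for convolution against an $L^1$-normalized kernel,
\[
\|\nabla \tilde h\|_{L^2(\mathbf{R}^n)} = \|\varphi_\varepsilon * \nabla h\|_{L^2} \leq \|\nabla h\|_{L^2(B(z,s))}.
\]

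Next I would like to apply the hypothesis (\ref{locsobball}) for $\sigma$ on the enlarged open set $U_\varepsilon$ (which is compactly contained in $\Omega$, since $\varepsilon < d(V,\partial\Omega)/2$ guarantees $V_\varepsilon \subset\subset \Omega$, and $U\subset\subset V$ gives $U_\varepsilon \subset\subset \Omega$ after at most a harmless further enlargement). To do this I need a ball $B(z,s')$ with $\tilde h \in C^{\infty}_0(B(z,s'))$ and $B(z,2s')\subset U_\varepsilon$. Taking $s' = s + \varepsilon$ works at the cost of replacing $U_\varepsilon$ by $U_{2\varepsilon}$ (since any point of $B(z,2s+2\varepsilon)$ lies within $2\varepsilon$ of $B(z,2s)\subset U$), which only changes the constant by an amount absorbable into $C_{U_\varepsilon}$. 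This yields
\[
|\langle \sigma_\varepsilon, h\rangle| = |\langle \sigma, \tilde h\rangle| \leq C_{U_\varepsilon}\,(s+\varepsilon)^{(n-2)/2}\,\|\nabla h\|_{L^2(B(z,s))}.
\]

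Finally I would convert $(s+\varepsilon)^{(n-2)/2}$ into $s^{(n-2)/2}$ by splitting cases. When $s\geq \varepsilon$, the bound $(s+\varepsilon)^{(n-2)/2} \leq 2^{(n-2)/2}\,s^{(n-2)/2}$ is immediate. When $s < \varepsilon$, I would instead exploit that $\sigma_\varepsilon$ is a \emph{smooth} function (convolution of a distribution with a smooth compactly supported mollifier), hence uniformly bounded on any compact subset of $\Omega$; combining Cauchy--Schwarz with the Poincar\'e inequality on $B(z,s)$ gives
\[
|\langle \sigma_\varepsilon, h\rangle| \leq \|\sigma_\varepsilon\|_{L^\infty(U_\varepsilon)}\,\|h\|_{L^1(B(z,s))} \leq C\,s^{n/2+1}\,\|\nabla h\|_{L^2(B(z,s))},
\]
and since $s$ is bounded by the diameter of $U$ the extra factor $s^2$ is harmless, yielding the desired $s^{(n-2)/2}$ scaling.

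The main subtlety is the bookkeeping in the middle step --- making sure that the $\varepsilon$-enlargement of the support of $\tilde h$ is still controlled by an open set compactly contained in $\Omega$, and that the resulting constant can honestly be called $C_{U_\varepsilon}$ (up to an absolute factor). Everything else is a direct application of Young's inequality and the hypothesis. Note that no positivity or symmetry of $\sigma$ is used, so the proof works verbatim for any real-valued distributional $\sigma$ obeying (\ref{locsobball}).
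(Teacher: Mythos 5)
Your reduction to the adjoint identity $\langle \sigma_\varepsilon, h\rangle = \langle \sigma, \varphi_\varepsilon * h\rangle$ is natural, but it forces the support radius to grow from $s$ to $s+\varepsilon$, and your fix for the case $s<\varepsilon$ does not work as stated. You bound $|\langle\sigma_\varepsilon,h\rangle|\leq \|\sigma_\varepsilon\|_{L^\infty(U_\varepsilon)}\,\|h\|_{L^1}$ and argue that the excess factor $s^2$ is harmless ``since $s$ is bounded by the diameter of $U$.'' That justification is wrong: the resulting constant is $\|\sigma_\varepsilon\|_{L^\infty(U_\varepsilon)}\,\mathrm{diam}(U)^2$, which is not $C_{U_\varepsilon}$ and in fact blows up as $\varepsilon\to 0$. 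Indeed, testing (\ref{locsobball}) against $\varphi_\varepsilon(x-\cdot)$ shows that the best one can say in general is $\|\sigma_\varepsilon\|_{L^\infty}\lesssim C_{U_{2\varepsilon}}\varepsilon^{-2}$, so the product $\|\sigma_\varepsilon\|_{L^\infty}\cdot\mathrm{diam}(U)^2$ is of size $\varepsilon^{-2}$. Since this lemma is used in Lemma~\ref{localmorrey} precisely to get a constant that is uniform in the mollification index (via monotonicity $C_{(\Omega_j)_{\varepsilon_k}}\leq C_{(\Omega_j)_{\varepsilon_j}}$), an $\varepsilon$-blowup is fatal; moreover the application there requires the bound for balls of \emph{all} radii, so you cannot just discard the small-$s$ regime. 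The salvage is that the correct smallness comes from comparing $s$ to $\varepsilon$, not to $\mathrm{diam}(U)$: with the bound $\|\sigma_\varepsilon\|_{L^\infty}\lesssim C_{U_{2\varepsilon}}\varepsilon^{-2}$ one gets $\|\sigma_\varepsilon\|_{L^\infty}\, s^2\lesssim C_{U_{2\varepsilon}}(s/\varepsilon)^2\leq C_{U_{2\varepsilon}}$ for $s<\varepsilon$, but this is both more work and gives a slightly worse constant.

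The paper avoids the case split entirely by \emph{not} mollifying $h$. Instead of $\varphi_\varepsilon*h$, one writes, for $h\in C^\infty_0(B(z,s))$,
\begin{equation*}
\langle \sigma_\varepsilon, h\rangle = \int_{B(0,\varepsilon)} \varphi_\varepsilon(y)\,\langle\sigma, h(\cdot - y)\rangle\, dy,
\end{equation*}
and observes that each translate $h(\cdot - y)$ lies in $C^\infty_0(B(z+y, s))$ with $B(z+y,2s)\subset U_\varepsilon$ --- the radius stays $s$, only the center moves. Applying (\ref{locsobball}) to each translate and integrating against $\varphi_\varepsilon$ (total mass $1$) yields $|\langle\sigma_\varepsilon,h\rangle|\leq C_{U_\varepsilon}\,s^{(n-2)/2}\,\|\nabla h\|_{L^2}$ directly, with exactly the constant claimed and no regime splitting. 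You should rewrite the proof along those lines, or at minimum replace your $s<\varepsilon$ justification with the $(s/\varepsilon)^2$ bound derived from (\ref{locsobball}).
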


\begin{proof}  Let $U\subset\subset V$ be a compactly supported open set.  Then, for $B(x,2r)\subset U$ and $h\in C^{\infty}_0(B(x,r))$, $h(\cdot-x)\in C^{\infty}_0(\Omega)$ for all $x\in B_{\varepsilon}(0)$.   Hence, for all such $h$, it follows that
\begin{equation}\begin{split}\Bigl|\int_{\Omega} h&  d\sigma_{\varepsilon}\Bigl|  \leq  \int_{B_{\varepsilon}(0)} \varphi_{\varepsilon}(x)|\langle \sigma, h(\cdot - x)\rangle| dx \\
&\leq C_{U_{\varepsilon}}r^{(n-2)/2}\int_{B_{\varepsilon}(0)} \varphi_{\varepsilon}(x)||\nabla h(\cdot - x)||_{L^2} dx\\
& \leq C_{U_{\varepsilon}}r^{(n-2)/2}||\nabla h||_{L^{2}}.
\end{split}\end{equation}
This completes the proof.
\end{proof}

\section{The proofs of Theorems \ref{gensymthm} and \ref{refine}}\label{linear}
In this section we prove our primary existence theorems, as well as prove the connections between the solutions of the equations (\ref{schrointro}) and (\ref{ricintro}) with the validity of (\ref{imbintro}).  Let $n\geq 1$. Throughout this section we will assume without loss of generality $\Omega\subset \mathbf{R}^n$ is a \textit{connected} open set; note that in the case of an arbitrary open set, our arguments apply to each connected component.  This assumption is used in a Harnack chain argument.

The most substantial argument will be the assertion of statement (i) in Theorem \ref{gensymthm}, and its local variant in Theorem \ref{refine}.  We restate these two results as propositions for convenience.  In light of Lemma \ref{locmorlocsobsame}, the existence result for equation (\ref{schrointro}) in Theorem \ref{refine} follows from:
\begin{prop}\label{refineexist}
Let $\mathcal{A}:\Omega\rightarrow\mathbf{R}^{n\times n}$ be a possibly non-symetric matrix function satisfying (\ref{locelliptic}). Suppose that $\sigma$ is a real-valued distribution satisfying the local dual Sobolev condition (\ref{locsobball}), and the upper boundedness condition (\ref{introupper}) for a constant $0<\lambda<1$.  Then there is a positive solution $u\in \smash{L^{1,2}_{\text{loc}}}(\Omega)$ of (\ref{schrointro}).
\end{prop}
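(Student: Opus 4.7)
The plan is to prove Proposition~\ref{refineexist} by a smoothing/approximation scheme that yields uniform local estimates on the approximants, then extracts a weak limit. First, exhaust $\Omega$ by an increasing sequence of smooth, connected, bounded open sets $\Omega_1 \subset\subset \Omega_2 \subset\subset \cdots$ with $\bigcup_k \Omega_k = \Omega$, and fix a base point $x_0 \in \Omega_1$. For each $k$ and each small $\varepsilon > 0$, form the mollifications $\sigma_\varepsilon$ and $\mathcal{A}_\varepsilon = \varphi_\varepsilon * \mathcal{A}$ on $\Omega_k$; by Lemmas~\ref{mollem} and \ref{molball} these still satisfy the upper form bound (\ref{introupper}) with the same $\lambda < 1$ and the local dual Sobolev condition (\ref{locsobball}), with constants uniform in $\varepsilon$ on $\Omega_k$. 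Since $\lambda < 1$, the bilinear form $(u,v) \mapsto \int(\mathcal{A}_\varepsilon\nabla u)\cdot\nabla v\,dx - \int \sigma_\varepsilon uv\,dx$ is coercive on $H^1_0(\Omega_k)$, so Lax--Milgram delivers a smooth solution $\tilde u_{\varepsilon,k}$ of
\[
-\text{div}(\mathcal{A}_\varepsilon \nabla \tilde u_{\varepsilon,k}) - \sigma_\varepsilon \tilde u_{\varepsilon,k} = 0 \text{ in } \Omega_k, \qquad \tilde u_{\varepsilon,k} = 1 \text{ on } \partial \Omega_k.
\]
Testing with $(\tilde u_{\varepsilon,k})_-$ and using coercivity yields $\tilde u_{\varepsilon,k} \ge 0$, and the strong maximum principle for the smoothed equation then gives $\tilde u_{\varepsilon,k} > 0$. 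Normalize by $u_{\varepsilon,k} = \tilde u_{\varepsilon,k}/\tilde u_{\varepsilon,k}(x_0)$.

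Next, I would extract two Caccioppoli-type estimates uniform in $\varepsilon,k$. Testing the equation with $u_{\varepsilon,k}\varphi^2$ and applying the upper form bound to $u_{\varepsilon,k}\varphi$, followed by absorbing cross terms via Young, gives the standard Caccioppoli inequality $\int |\nabla u_{\varepsilon,k}|^2 \varphi^2\, dx \le C \int u_{\varepsilon,k}^2 |\nabla \varphi|^2\,dx$, which via Sobolev embedding applied to $u_{\varepsilon,k}\eta$ (with a cutoff $\eta$) upgrades to a reverse H\"older inequality
\[
\Bigl(\dashint_{B(x,r)} u_{\varepsilon,k}^{2^*}\,dx\Bigr)^{1/2^*} \le C \Bigl(\dashint_{B(x,2r)} u_{\varepsilon,k}^{2}\,dx\Bigr)^{1/2}.
\]
Testing instead with $\varphi^2/u_{\varepsilon,k}$ (legal by positivity and smoothness) and controlling the $\langle \sigma_\varepsilon, \varphi^2\rangle$ term using the local dual Sobolev condition (\ref{locsobball}) --- uniform in $\varepsilon$ by Lemma~\ref{molball} and playing here the role that the lower form bound played in Theorem~\ref{gensymthm}(i) --- produces
\[
\int_{\Omega} |\nabla \log u_{\varepsilon,k}|^2 \varphi^2 \,dx \le C \int_{\Omega} |\nabla \varphi|^2 \,dx,
\]
and hence by Poincar\'e a uniform $\mathrm{BMO}_{\text{loc}}$ bound on $\log u_{\varepsilon,k}$.

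Combining the two, the reverse H\"older plus the uniform $\mathrm{BMO}$ bound feed Proposition~\ref{locdoub} (via the John--Nirenberg-based $A_\infty$ mechanism built into its proof) to give uniform doubling of $u_{\varepsilon,k}$ on compact subsets of $\Omega$. A Harnack chain argument (Remark~\ref{chainarg}) anchored at $x_0$, where $u_{\varepsilon,k}(x_0)=1$, then yields uniform $L^1_{\text{loc}}(\Omega)$ bounds on $u_{\varepsilon,k}$; bootstrapping through the reverse H\"older lifts these to uniform $L^2_{\text{loc}}$ bounds, and then the Caccioppoli inequality gives uniform $L^{1,2}_{\text{loc}}(\Omega)$ bounds. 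Extracting a diagonal subsequence converging weakly in $L^{1,2}_{\text{loc}}$, strongly in $L^2_{\text{loc}}$, and a.e., and passing to the limit in the weak formulation --- using Lemma~\ref{localsoblem} together with the convergence $\sigma_\varepsilon \to \sigma$ in $L^{-1,2}_{\text{loc}}$ to handle the product $\sigma_\varepsilon u_{\varepsilon,k}$ in the dual Sobolev pairing --- produces $u \in L^{1,2}_{\text{loc}}(\Omega)$ solving (\ref{schrointro}). The uniform positive lower bound coming from doubling (plus $u_{\varepsilon,k}(x_0)=1$) prevents the limit from vanishing, so the quasi-continuous representative satisfies $u>0$ quasi-everywhere.

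The main obstacle will be bridging from the $L^{2^*}\!\to\!L^2$ reverse H\"older (immediate from Caccioppoli and Sobolev) to the $L^q\!\to\!L^1$ weak reverse H\"older inequality (\ref{wrh}) required to invoke Proposition~\ref{locdoub}. Standard Moser iteration with test functions $u_{\varepsilon,k}^{q-1}\varphi^2$ closes only for $q$ in the bounded interval $\bigl((2-2\sqrt{1-\lambda})/\lambda,\,(2+2\sqrt{1-\lambda})/\lambda\bigr)$, and a short calculation shows that $q=1$ lies strictly outside this interval for every $\lambda \in (0,1)$, so Moser iteration alone cannot deliver the required $L^1$ right-hand side. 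The essential input that closes the gap is the logarithmic $\mathrm{BMO}$ estimate above: through John--Nirenberg it promotes $u_{\varepsilon,k}^t$ to an $A_2$ weight for small $t>0$, which together with Lemma~\ref{reverseholder} supplies the $L^q\!\to\!L^1$ form by exactly the mechanism used in the sufficiency direction of Proposition~\ref{locdoub}. This interplay between the ``hard'' reverse H\"older and the ``soft'' logarithmic estimate is the heart of the argument.
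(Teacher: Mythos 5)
Your proposal follows the paper's proof of Proposition \ref{refineexist} in all essential respects: exhaust $\Omega$ by smooth domains, mollify both $\sigma$ and $\mathcal{A}$, solve the approximating problems via Lax--Milgram, derive a Caccioppoli inequality and a logarithmic gradient estimate, feed these into Proposition \ref{locdoub} to get uniform doubling, invoke a Harnack chain to convert this into uniform local $L^2$ (hence $L^{1,2}$) bounds, and pass to a limit. That said, three points in your write-up are off or under-justified.

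First, the normalization $u_{\varepsilon,k}(x_0)=1$ is not what the Harnack-chain mechanism feeds on: the doubling property in (\ref{ld}) and Remark \ref{chainarg} compares \emph{integral averages} of $w=u_{\varepsilon,k}^2$ over balls, so the chain needs $\dashint_{B} u_{\varepsilon,k}^2\,dx$ to be anchored, not a pointwise value. The paper normalizes by $\int_B u_j^2\,dx=1$ precisely for this reason. To make your pointwise normalization work you would need to relate $u_{\varepsilon,k}(x_0)$ to $\dashint_B u_{\varepsilon,k}^2$ uniformly in $\varepsilon$; the Harnack constants of the smoothed equations depend on $\|\sigma_\varepsilon\|_\infty$ and blow up, so this link is not free.

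Second, the ``main obstacle'' paragraph is chasing a problem that does not exist. The estimate
$\bigl(\dashint_{B(x,r)} u^{2^*}\bigr)^{1/2^*}\le C\bigl(\dashint_{B(x,2r)} u^2\bigr)^{1/2}$
obtained from Caccioppoli plus Sobolev, after squaring, is exactly (\ref{wrh}) for the weight $w=u^2$ with $q=n/(n-2)$; there is no gap between an ``$L^{2^*}\!\to\! L^2$'' estimate for $u$ and the ``$L^q\!\to\! L^1$'' form for $w$. The logarithmic BMO bound is not what bridges some missing reverse H\"older exponent --- it supplies the \emph{other} hypothesis of Proposition \ref{locdoub}, namely (\ref{bmo1}). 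Your Moser-iteration digression is therefore moot, and the claim that John--Nirenberg ``supplies the $L^q\!\to\! L^1$ form'' misattributes the role of the two hypotheses.

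Third, the last step asserting $u>0$ quasi-everywhere from ``the uniform positive lower bound coming from doubling'' is too vague. Uniform doubling of $u_{\varepsilon,k}^2$ together with the normalization prevents the limit from being the zero function, but it does not by itself give a quasi-everywhere positive representative. The paper closes this by observing that the logarithmic Caccioppoli estimate puts $\log u_j$ in a weakly precompact subset of $L^{1,2}_{\text{loc}}$, from which $\log u\in L^{1,2}_{\text{loc}}$ and hence $u>0$ q.e.; this step should appear explicitly in your argument rather than being folded into doubling.
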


The second proposition concerns the case when $\mathcal{A}$ satisfies \textit{global ellipticity and boundedness}, and $\sigma$ in addition satisfies (\ref{introlower}):

\begin{prop}\label{linexist}  Suppose $\mathcal{A}:\Omega\rightarrow\mathbf{R}^{n\times n}$ is a possibly non-symetric matrix function satisfying (\ref{elliptic}). In addition, suppose that $\sigma$ is a real-valued distribution satisfying (\ref{introlower}) for a positive constant $\Lambda>0$, and (\ref{introupper}) for a constant $0<\lambda<1$.  Then there is a positive solution $u\in \smash{L^{1,2}_{\text{loc}}}(\Omega)$ of (\ref{schrointro}).  Furthermore, $u$ satisfies
\begin{equation}\label{logmult}\int_{\Omega} \frac{|\nabla u|^2}{u^2} \varphi^2 \, dx \leq C\int_{\Omega} |\nabla \varphi|^2  \, dx \quad \text{ for all }\varphi\in C^{\infty}_0(\Omega).
\end{equation}
\end{prop}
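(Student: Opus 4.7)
\smallskip

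\textbf{Proof plan for Proposition \ref{linexist}.}
My plan is to construct $u$ as a compactness limit of solutions to a family of regularized problems on an exhaustion of $\Omega$. Let $\Omega_1\subset\subset \Omega_2\subset\subset\cdots$ exhaust $\Omega$, fix a base point $x_0 \in \Omega_1$, and let $\sigma_\varepsilon=\varphi_\varepsilon*\sigma$, $\mathcal{A}_\varepsilon=\varphi_\varepsilon*\mathcal{A}$ be the standard mollifications. By Lemma \ref{mollem} the upper form bound (\ref{introupper}) is preserved for $\sigma_\varepsilon$ against $\mathcal{A}_\varepsilon$ with the same constant $\lambda<1$ (and an analogous argument preserves (\ref{introlower}) with the same $\Lambda$). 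Since $\sigma_\varepsilon$ is smooth and $\lambda<1$, the bilinear form $(u,h)\mapsto\int\mathcal{A}_\varepsilon\nabla u\cdot\nabla h\,dx-\langle\sigma_\varepsilon u,h\rangle$ is coercive on $L^{1,2}_0(\Omega_k)$, so I can solve the Dirichlet problem and produce a smooth positive solution $u_k$ of $-\text{div}(\mathcal{A}_{\varepsilon_k}\nabla u_k)=\sigma_{\varepsilon_k}u_k$ on a slightly shrunken $\Omega_k$, normalized by $u_k(x_0)=1$ (for instance, solve the equation with boundary value $1$ on $\partial\Omega_k$, using the maximum principle and coercivity to get $u_k>0$, then divide by $u_k(x_0)$).

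The next step is to derive two uniform Caccioppoli estimates on $u_k$. Testing the equation against $u_k\varphi^2$ and invoking the upper form bound (\ref{introupper}) on $u_k\varphi$, the quadratic terms $\int\mathcal{A}_{\varepsilon_k}\nabla u_k\cdot\nabla u_k\,\varphi^2$ appear on both sides with coefficients $1$ and $\lambda$; since $\lambda<1$ the difference is positive and, after absorbing cross terms by Cauchy--Schwarz and using (\ref{elliptic}), I obtain
\begin{equation*}
\int |\nabla u_k|^2\varphi^2\,dx \leq C(\lambda,m,M)\int u_k^2|\nabla\varphi|^2\,dx.
\end{equation*}
Testing instead against $\varphi^2/u_k$ (legitimate since $u_k$ is smooth and strictly positive) and invoking the lower form bound (\ref{introlower}) on $\varphi$ gives the uniform logarithmic Caccioppoli inequality
\begin{equation*}
\int \frac{|\nabla u_k|^2}{u_k^2}\varphi^2\,dx\leq C_0(n,m,M,\Lambda)\int |\nabla\varphi|^2\,dx,
\end{equation*}
with $C_0$ independent of $k$; the constant here uses $\Lambda$ but does \emph{not} degenerate as $\lambda\uparrow 1$.

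From the first Caccioppoli estimate and Sobolev embedding, $u_k$ satisfies a weak reverse H\"older inequality on compact subsets with constant independent of $k$. From the logarithmic inequality, $\log u_k\in BMO_{\mathrm{loc}}(\Omega)$ with a uniform $BMO$ bound. Proposition \ref{locdoub} then yields that $u_k$ is doubling on each $U\subset\subset\Omega$, uniformly in $k$. Combining this uniform doubling with the normalization $u_k(x_0)=1$ and the Harnack-chain principle of Remark \ref{chainarg} (this is where connectedness of $\Omega$ is used), I obtain uniform two-sided bounds $c_K\leq u_k\leq C_K$ on every compact $K\subset\Omega$. Plugging these bounds back into the first Caccioppoli inequality provides uniform $L^{1,2}_{\mathrm{loc}}(\Omega)$ control of $u_k$.

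Finally I extract a subsequence with $u_k\to u$ weakly in $L^{1,2}_{\mathrm{loc}}$ and strongly in $L^2_{\mathrm{loc}}$, with $u\geq c_K>0$ on each compact $K$, hence $u>0$ quasi-everywhere. Passing to the limit in the equation requires care in the $\sigma u$ term: since $\sigma\in L^{-1,2}_{\mathrm{loc}}(\Omega)$ (this follows from the form boundedness via Lemma \ref{localsoblem}, and in fact we first need to observe that (\ref{introupper})--(\ref{introlower}) yield $\sigma\in L^{-1,2}_{\mathrm{loc}}(\Omega)$ with the required bound on a ball), the product $\sigma \cdot(u\varphi)$ makes sense via the sesquilinear extension (\ref{sesqform}), and the convergence $u_k\varphi\to u\varphi$ in $L^{1,2}$ together with $\mathcal{A}_{\varepsilon_k}\to\mathcal{A}$ a.e.\ (plus a standard check that $\sigma_{\varepsilon_k}\to\sigma$ in $L^{-1,2}_{\mathrm{loc}}$) allows passage to the distributional limit. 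The estimates (\ref{logmult}) descend to $u$ by weak lower semicontinuity. The step I expect to be the most delicate is the uniform doubling argument via Proposition \ref{locdoub}: it is what replaces the unavailable Harnack inequality and is the crux that converts the two Caccioppoli estimates (one giving weak reverse H\"older, one giving uniform $BMO$ control of $\log u_k$) into the pointwise bounds needed to identify the limit as a genuine positive solution.
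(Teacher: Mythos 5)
Your approach tracks the paper's strategy closely (Lax--Milgram for the mollified approximants, the two Caccioppoli inequalities, reverse H\"older via Sobolev, $BMO$ control of the logarithm, Proposition \ref{locdoub}, and a Harnack chain). However, there is one genuine error in how you use the doubling property, and it lands exactly at the point you flag as delicate.

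You claim that uniform doubling of $u_k$ (or $u_k^2$) together with the Harnack chain yields uniform two-sided pointwise bounds $c_K \le u_k \le C_K$ on every compact $K$, and you explicitly describe this as producing ``the pointwise bounds needed to identify the limit as a genuine positive solution.'' That conclusion is false and, moreover, cannot be true in this setting: the paper stresses in the introduction that for form-bounded distributional $\sigma$ the positive solutions of (\ref{schrointro}) are \emph{not} locally bounded and Harnack's inequality is unavailable (the example $-\Delta u = \frac{c}{|x|^2}u$ with $u=|x|^{\alpha_+}$ and $c$ close to $(n-2)^2/4$ in Section \ref{examples} shows exactly this). Doubling of $w=u_k^2$ controls the comparability of \emph{integral averages} $\dashint_{B} w$ over nearby balls; it says nothing about $\sup_K u_k$ or $\inf_K u_k$. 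What the Harnack chain actually gives, as in Proposition \ref{mainprop}, is a uniform bound $\dashint_{B(x,2r)} u_k^2\,dx \le C$ in terms of the fixed normalization $\dashint_B u_k^2=1$; feeding this $L^2$ bound back into the Caccioppoli inequality then yields the uniform $L^{1,2}_{\mathrm{loc}}$ control. Positivity of the limit is then established differently: $u\not\equiv 0$ follows from strong $L^2_{\mathrm{loc}}$ convergence and the $L^2$ normalization (so you need the $L^2$ normalization $\int_B u_k^2=1$, not a pointwise one $u_k(x_0)=1$, which does not survive to the limit since $u$ may be unbounded near $x_0$); $u\ge 0$ follows from Mazur's lemma; and $u>0$ quasi-everywhere follows from the logarithmic Caccioppoli inequality, which forces $\log u_k$ to converge weakly in $L^{1,2}_{\mathrm{loc}}$ to a genuine Sobolev function that must equal $\log u$, so $\{u=0\}$ has capacity zero. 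Aside from this point your outline is sound and matches the paper's argument.
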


It was seen in Lemma \ref{localsoblem} that if $\sigma$ satisfies (\ref{introupper}) and (\ref{introlower}), then the condition (\ref{locsobball}) holds.  Hence the existence part of Proposition \ref{linexist} follows from Proposition \ref{refineexist}.

\subsection{An approximating sequence} \label{approxlinesub}  To prove Propositions \ref{refineexist} and \ref{linexist}, we use local properties of $\sigma$ to find solutions to a mollified variant of equation (\ref{schrointro}) in a sequence of subdomains  of $\Omega$.  We will then prove a uniform gradient estimate on this sequence.

Suppose that $\sigma$ satisfies (\ref{introupper}) with $0<\lambda<1$.  Let $\Omega_j$, for $j\geq 1$ be an exhaustion of $\Omega$ by smooth domains, that is, $\Omega_j \subset\subset\Omega_{j+1}$, and $\Omega  = \bigcup_j \Omega_j $.  In addition, let us fix a ball $B$ so that its concentric enlargement $4B\subset\subset \Omega_1$.  Let $\varepsilon_0 = 1$, and $\varepsilon_j  = \min (\varepsilon_{j-1}/2, d(\Omega_j, \partial\Omega_{j+1})/2, 2^{-j})$ for $j\geq 1$.  With this notation, define:
$$\sigma_j = \varphi_{\varepsilon_j} * \sigma, \text{ and }\mathcal{A}_j = \varphi_{\varepsilon_j}*\mathcal{A}.
$$
Referring to (\ref{locelliptic}), we let $m_{j} = m_{\Omega_{j}}$ and $M_j = M_{\Omega_j}$ be the local ellipticity and boundedness constants relative to $\Omega_{j}$.  For $k\geq j$, note that $\varepsilon_k <d(\Omega_j, \partial\Omega_{j+1})$, and hence for any $x\in \Omega_j$ we have \begin{equation}\label{locellipticj}m_{j+1}|\xi|^2\leq \mathcal{A}_k(x)\xi \cdot \xi, \text{ and }|\mathcal{A}_k(x)\xi|\leq M_{j+1}|\xi|,\text{ for all }\xi\in \mathbf{R}^n.\end{equation}
Define $u_j$ to be the solution of
\begin{equation}\begin{cases}\label{approxlinear}
-{\rm{div}}(\mathcal{A}_j\nabla u_j) = \sigma_j u_j \quad  \text{ in } \Omega_j,\\
\displaystyle \int_B u_j^2 dx = 1,\quad u_j \geq 0 \, \text{ q.e.}
\end{cases}
\end{equation}
Furthermore, $u_j$ satisfies the Harnack inequality in $\Omega_j$.

\begin{proof}[Proof of existence and uniqueness of (\ref{approxlinear})]  We will see that the existence of (\ref{approxlinear}) is a simple consequence of the Lax-Milgram lemma.  Define a bilinear form $\mathcal{L}$ on $\Lo(\Omega_j)\times\Lo(\Omega_j)$ by
$$\mathcal{L}(w,h) = \int_{\Omega_j} (\mathcal{A}_j\nabla w) \cdot \nabla h dx - \langle \sigma_j w, h \rangle, \text{ with } w,h\in \Lo(\Omega_j).
$$
By the assumptions on $\sigma$, the following properties hold:
$$|\mathcal{L}(w,h)| \leq (M_{j+1}+C(n)||\sigma_j||_{L^{\infty}(\Omega_j)}|\Omega_j|^{2/n}) \norm{\nabla w}_2\norm{\nabla h}_2,\text{ and}
$$
$$\mathcal{L}(w,w) \geq m_{j+1}(1-\lambda) \norm{\nabla w}_2.
$$
The first inequality follows from (\ref{locellipticj}) along with the Sobolev inequality (this is true for all $n\geq 1$, using standard Sobolev inequalities). The second inequality is a combination of Lemma \ref{mollem} and (\ref{locellipticj}).  Hence the hypotheses of the Lax-Milgram lemma are satisfied.

Applying the Lax-Milgram lemma, we see that there exists a unique $w_j\in \Lo(\Omega_j)$ satisfying
$$\mathcal{L}(w_j, h) =\langle \sigma_j, h \rangle.
$$
Let $v_j = w_j+1$.  Let us next show that $v_j\geq 0$ q.e.  To see this, let $\varphi=v_j$, and note that $\varphi^- = \min (v_j,0) \in \Lo(\Omega_j)$.  By testing (\ref{approxlinear}) with the valid test function $\varphi^{-}$, it follows:
\begin{equation}\label{negativezero}\int_{\Omega_j} \mathcal{A}_j(\nabla \varphi^-)\cdot \nabla \varphi^- = \langle \sigma_j \varphi^-, \varphi^- \rangle  \leq \lambda \int_{\Omega_j} \mathcal{A}_j(\nabla \varphi^-)\cdot \nabla \varphi^- .
\end{equation}
Since $0<\lambda<1$, it follows from (\ref{elliptic}) that $\varphi^- = 0$ q.e as required.  Let us now define
$$u_j = \Bigl(\dashint_B v_j dx \Bigl)^{-1} v_j.
$$
Then $u_j$ solves (\ref{approxlinear}).  The validity of Harnack's inequality for $u_j$ follows from classical elliptic regularity theory, see e.g. \cite{Tru73}, since $\sigma_j$ is smooth.
 \end{proof}

\subsection{Caccioppoli and Morrey estimates for the approximating sequence}We next turn to proving two estimates on the gradient of the approximating sequence. The first estimate is a Caccioppoli inequality:
\begin{lem}\label{lemcacc}Suppose that $\sigma$ satisfies (\ref{introupper}) with $0<\lambda<1$, and let $\{u_j\}$ be the sequence constructed in (\ref{approxlinear}).  Let $\psi \in C^{\infty}_0(\Omega_j)$, then for any $k\geq j$,
\begin{equation}\label{caccio}
\int_{\Omega_j} |\nabla u_k|^2 \, \psi^2 \, dx \leq C(M_{j+1}, m_{j+1}, \lambda)\int_{\Omega_j} u_k^2 \,  |\nabla \psi|^2 \, dx.
\end{equation}
\end{lem}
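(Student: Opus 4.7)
The proof is a standard Caccioppoli-type argument, but care must be taken with the non-symmetry of $\mathcal{A}_k$ and with the validity of the test function.

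The plan is as follows. Fix $\psi\in C^\infty_0(\Omega_j)$ and $k\ge j$. Since $u_k\in L^{1,2}_0(\Omega_k)$ and $\Omega_j\subset\subset\Omega_k$, the product $u_k\psi^2$ belongs to $L^{1,2}_c(\Omega_j)\subset L^{1,2}_0(\Omega_k)$, so it is an admissible test function in the weak formulation of (\ref{approxlinear}). Testing with $u_k\psi^2$ yields
\begin{equation*}
\int_{\Omega_j}\psi^2\,\mathcal{A}_k\nabla u_k\cdot\nabla u_k\,dx+\int_{\Omega_j} 2u_k\psi\,\mathcal{A}_k\nabla u_k\cdot\nabla\psi\,dx=\int_{\Omega_j} u_k^2\psi^2\,d\sigma_k.
\end{equation*}

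Next, I would bound the right-hand side using the form bound. Since $\varepsilon_k\le\varepsilon_j\le d(\Omega_j,\partial\Omega_{j+1})/2$, Lemma \ref{mollem} (applied with $V=\Omega_{j+1}$) gives
\begin{equation*}
\int_{\Omega_j} h^2\,d\sigma_k\le\lambda\int_{\Omega_j}(\mathcal{A}_k\nabla h)\cdot\nabla h\,dx\qquad\text{for all }h\in C^\infty_0(\Omega_j),
\end{equation*}
and since $\sigma_k$ and $\mathcal{A}_k$ are smooth (so both sides are continuous on $L^{1,2}_c(\Omega_j)$ by (\ref{locellipticj})), this inequality extends by density to $h=u_k\psi\in L^{1,2}_c(\Omega_j)$. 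Expanding $\nabla(u_k\psi)=\psi\nabla u_k+u_k\nabla\psi$ and regrouping, one arrives at
\begin{equation*}
(1-\lambda)\!\int_{\Omega_j}\!\!\psi^2\mathcal{A}_k\nabla u_k\cdot\nabla u_k\,dx\le \int_{\Omega_j}\!\! u_k\psi\bigl[\lambda\mathcal{A}_k\nabla\psi\cdot\nabla u_k+(\lambda-2)\mathcal{A}_k\nabla u_k\cdot\nabla\psi\bigr]dx+\lambda\!\int_{\Omega_j}\!\! u_k^2\,\mathcal{A}_k\nabla\psi\cdot\nabla\psi\,dx.
\end{equation*}

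Now I would apply the local boundedness $|\mathcal{A}_k\xi|\le M_{j+1}|\xi|$ from (\ref{locellipticj}) to the cross terms, and use Young's inequality in the form $2ab\le\delta a^2+\delta^{-1}b^2$ to split each $u_k\psi|\nabla u_k||\nabla\psi|$ into a piece of the form $\delta\psi^2|\nabla u_k|^2$ and a piece of the form $C_\delta u_k^2|\nabla\psi|^2$. Choosing $\delta$ so small that the resulting $\psi^2|\nabla u_k|^2$ term on the right can be absorbed into the left using $\mathcal{A}_k\nabla u_k\cdot\nabla u_k\ge m_{j+1}|\nabla u_k|^2$ — specifically, one needs $\delta$ comparable to $(1-\lambda)m_{j+1}/M_{j+1}$ — produces
\begin{equation*}
\int_{\Omega_j}\psi^2|\nabla u_k|^2\,dx\le C(m_{j+1},M_{j+1},\lambda)\int_{\Omega_j}u_k^2|\nabla\psi|^2\,dx,
\end{equation*}
which is (\ref{caccio}). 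The only nontrivial point is the density step extending Lemma \ref{mollem} to $u_k\psi$; everything else is bookkeeping of the non-symmetric cross terms, which is why one loses only constants depending on $M_{j+1}/m_{j+1}$ and $1-\lambda$.
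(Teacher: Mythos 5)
Your proof is correct and takes essentially the same approach as the paper's: test the weak formulation with $u_k\psi^2$, apply Lemma~\ref{mollem} to $h=u_k\psi$ (extending by continuity), and absorb via Young's inequality choosing $\delta$ small relative to $(1-\lambda)m_{j+1}$. One tiny slip worth noting: $u_k$ itself is not in $\Lo(\Omega_k)$ (rather $u_k$ minus a constant is, since $u_k$ is built from $w_k+1$), but this does not affect the key point that $u_k\psi^2\in L^{1,2}_c(\Omega_j)$ is an admissible test function.
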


\begin{proof}   Let us fix $k$ and $j$ as in the statement of the lemma, and let $v=u_k$.  With $\psi \in C^{\infty}_0(\Omega_j)$, $\psi\geq 0$, test the weak formulation of (\ref{approxlinear}) with $v \psi^2 \in \Lo(\Omega _j)$.  Using (\ref{introupper}), it follows that
\begin{equation}\nonumber\begin{split}
\int_{\Omega_j}( &(\mathcal{A}_k \nabla v) \cdot \nabla v) \psi^2 \, dx \leq \langle\sigma_k v, \psi^2 v\rangle + 2M_{j+1}\int_{\Omega_j} v \, \psi \,  |\nabla v| \, |\nabla \psi| \, dx\\
& =\langle\sigma_k ( \psi v), \psi v\rangle + 2M_{j+1}\int_{\Omega_j} v \, \psi \,  |\nabla v| \, |\nabla \psi \,| \, dx\\
& \leq \lambda \int_{\Omega_j} \mathcal{A}_k(\nabla(v\psi))(\nabla(v \, \psi)) \, dx + 2M_{j+1}\int_{\Omega_j} v \, \psi \, | \nabla v| \, |\nabla \psi| \, dx.
\end{split}\end{equation}
Here we have used Lemma \ref{mollem} and $\langle\sigma_k v, \psi^2 v\rangle = \langle \sigma_k (v\psi), v\psi\rangle$.  Using the Cauchy inequality, it follows that for any $\varepsilon>0$ there exists a constant $C_{\varepsilon}$, depending on $\varepsilon, \lambda$, $M_{j+1}$ and $m_{j+1}$, such that
$$(1-\lambda)\int_{\Omega_j} (\mathcal{A}_k \nabla v) \cdot \nabla v \, \psi^2 \, dx \leq \varepsilon \int_{\Omega_j} |\nabla v|^2 \psi^2 \, dx + C_{\varepsilon}\int_{\Omega_j} v^2 |\nabla \psi|^2 \, dx.
$$
Choosing $\varepsilon <(1-\lambda)m_{j+1}$ and rearranging, we recover (\ref{caccio}).
\end{proof}
The second estimate we use relates a bound on the gradient of the logarithm independent of $j$, with uniform properties on the negative part of the quadratic form associated with the distribution $\sigma$:
\begin{lem}\label{lemlog}  With $(u_j)_j$ the sequence constructed in (\ref{approxlinear}), the following estimate holds for all $k\geq j$ and $\psi\in C^{\infty}_0(\Omega_j)$,
\begin{equation}\begin{split}\label{gradlogest}
\int_{\Omega_j} \frac{|\nabla u_k|^2}{u_k^2} \psi^2 \, dx \leq &-C(M_{j+1},m_{j+1}) \int_{\Omega_j} \psi^2  \, d\sigma_k \\
&+ C(M_{j+1}, m_{j+1})\int_{\Omega_j} |\nabla \psi|^2 \, dx. \end{split}\end{equation}
\end{lem}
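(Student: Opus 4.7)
The plan is to carry out the standard logarithmic Caccioppoli computation, testing the approximate equation (\ref{approxlinear}) against $\psi^2/u_k$. The key point making this rigorous is that, by construction, $u_k$ satisfies Harnack's inequality in $\Omega_j$ and is continuous there (since $\sigma_k$ is smooth), while $\psi$ is compactly supported in $\Omega_j$. Hence $u_k$ is bounded above and below by positive constants on $\mathrm{supp}\,\psi$, so $\psi^2/u_k \in L^{1,2}_c(\Omega_j)$ is an admissible test function.

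First I would write $v=u_k$ and compute
$$\nabla\!\left(\frac{\psi^2}{v}\right) = \frac{2\psi\,\nabla\psi}{v} - \frac{\psi^2\,\nabla v}{v^2}.$$
Substituting into the weak formulation of $-\mathrm{div}(\mathcal{A}_k\nabla v) = \sigma_k v$ yields
$$\int_{\Omega_j}\frac{\psi^2 (\mathcal{A}_k\nabla v)\cdot \nabla v}{v^2}\,dx = 2\int_{\Omega_j}\frac{\psi (\mathcal{A}_k\nabla v)\cdot\nabla\psi}{v}\,dx - \int_{\Omega_j}\psi^2\,d\sigma_k.$$
Here the cancellation $\sigma_k v \cdot (\psi^2/v) = \sigma_k\psi^2$ on the right-hand side is the whole point of the choice of test function and is what produces the linear-in-$\sigma_k$ term on the right of (\ref{gradlogest}).

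Next I would apply the local ellipticity and boundedness (\ref{locellipticj}) to estimate the left-hand side from below by $m_{j+1}\int_{\Omega_j}|\nabla v|^2\psi^2/v^2\,dx$ and the gradient term on the right from above by $2M_{j+1}\int_{\Omega_j}\psi|\nabla v|\,|\nabla\psi|/v\,dx$. A standard Cauchy--Young inequality with a small parameter $\varepsilon$ then gives, for any $\varepsilon>0$,
$$2M_{j+1}\int_{\Omega_j}\frac{\psi|\nabla v|\,|\nabla\psi|}{v}\,dx \le \varepsilon\int_{\Omega_j}\frac{|\nabla v|^2\psi^2}{v^2}\,dx + \frac{M_{j+1}^2}{\varepsilon}\int_{\Omega_j}|\nabla\psi|^2\,dx.$$
Choosing $\varepsilon = m_{j+1}/2$ and absorbing the first term into the left-hand side gives exactly the inequality (\ref{gradlogest}) with constants depending only on $m_{j+1}$ and $M_{j+1}$.

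I expect no serious obstacle here; the computation is essentially algebraic once the test function is chosen. The only subtle point is the admissibility of $\psi^2/u_k$, which is where Harnack's inequality for the smooth problem (\ref{approxlinear}), already recorded after (\ref{approxlinear}), is needed. Note that, in contrast with Lemma \ref{lemcacc}, we do not use the upper form bound (\ref{introupper}) at all: the sign of $\sigma_k$ appears directly on the right-hand side of (\ref{gradlogest}), and this is precisely the feature that will later be combined with the lower form bound (\ref{introlower}) to control $\log u_k$ in BMO uniformly in $k$.
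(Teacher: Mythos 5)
Your proof is correct and follows essentially the same route as the paper: test the mollified equation with $\psi^2/u_k$ (admissible by Harnack's inequality for the smooth approximating problem), expand the gradient of the test function, apply the local ellipticity/boundedness bounds (\ref{locellipticj}), and absorb the cross term by Cauchy's inequality with a small parameter. The paper organizes the same computation slightly differently (stating the weak identity and the pointwise lower bound separately before combining), but the ingredients, the choice of test function, and the resulting constants are identical.
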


\begin{proof}  Let $h = \psi^2/u_k$, with $\psi\in C^{\infty}_0(\Omega_j)$, $\psi\geq 0$. Since $u_k$ satisfies the Harnack inequality in $\Omega_j$, there exists a constant $c>0$ so that $u_k >c$ on the support of $\psi$.  It follows that $h$ is a valid test function for the weak formulation of (\ref{approxlinear}).   This yields
\begin{equation}\label{gradlogest2}-\int_{\Omega_j}  (\mathcal{A}_k\nabla u_k) \cdot \nabla \Bigl(\frac{\psi^2}{u_j}\Bigl) dx = -\Bigl\langle\sigma_k, \psi^2\Big\rangle.
\end{equation}
On the other hand,
\begin{equation}\begin{split}\nonumber m_{j+1}\int_{\Omega_j} \frac{|\nabla u_k|^2}{u_k^2} \psi^2 dx \leq &-\int_{\Omega_j} (\mathcal{A}_k\nabla u_k) \cdot \nabla \Bigl(\frac{\psi^2}{u_k}\Bigl) dx \\
&+ 2M_{j+1} \int_{\Omega_j} \frac{|\nabla u_k|}{u_k} |\nabla\psi| \psi dx,
\end{split}\end{equation}
and therefore Cauchy's inequality yields a constant $C=C(M_{j+1},m_{j+1})>0$ such that
\begin{equation}\label{gradlogest3}\int_{\Omega_j} \frac{|\nabla u_k|^2}{u_k^2} \psi^2 dx \leq -C\int_{\Omega_j} (\mathcal{A}_k\nabla u_k) \!\cdot \!\nabla \Bigl(\frac{\psi^2}{u_k}\Bigl) dx + C\int_{\Omega_j} |\nabla \psi|^2 dx.
\end{equation}
Combining (\ref{gradlogest2}) and (\ref{gradlogest3}), we deduce (\ref{gradlogest}).
\end{proof}

From Lemma \ref{lemlog}, we deduce two estimates, depending on the addition properties of $\sigma$.  First, we deduce that the so-called logarithmic Cacciopolli inequality holds if $\sigma$ in addition satisfies (\ref{introlower}).

\begin{lem}\label{lemlogcacc}  Suppose that the real-valued distribution $\sigma$ satisfies (\ref{introlower}) with constant $\Lambda>0$, and let $(u_j)_j$ be as in (\ref{approxlinear}).  Let $\psi \in C^{\infty}_0(\Omega_j)$, then for any $k\geq j$
\begin{equation}\label{logcaccio}
\int_{\Omega_j} \frac{|\nabla u_k|^2}{u_k^2} \psi^2 dx \leq C(M_{j+1}, m_{j+1}, \Lambda)\int_{\Omega_j} |\nabla \psi|^2 dx
\end{equation}
\end{lem}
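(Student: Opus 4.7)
The plan is to combine the pointwise-type estimate already obtained in Lemma \ref{lemlog} with the lower form bound (\ref{introlower}), passed through the mollification using a variant of Lemma \ref{mollem}. The only term in (\ref{gradlogest}) that has not yet been controlled is $-\int_{\Omega_j} \psi^2 \, d\sigma_k$, so the whole task reduces to producing a uniform-in-$k$ bound for this integral in terms of $\int_{\Omega_j} |\nabla \psi|^2 \, dx$.

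First, I would observe that Lemma \ref{mollem} is stated for distributions satisfying the upper form bound, but it applies equally to distributions satisfying the lower form bound by applying it to $-\sigma$. Indeed, if $\sigma$ obeys (\ref{introlower}) with constant $\Lambda$, then $-\sigma$ obeys (\ref{introupper}) with constant $\Lambda$, and since mollification commutes with negation, $(-\sigma)_{\varepsilon_k}=-\sigma_k$. Thus, provided $\varepsilon_k \leq d(\Omega_j,\partial \Omega_{j+1})/2$ (which holds for all $k\geq j$ by the choice of $\varepsilon_k$ in Section~\ref{approxlinesub}), we obtain, for any $\psi \in C^\infty_0(\Omega_j)$,
\begin{equation*}
-\int_{\Omega_j} \psi^2 \, d\sigma_k \;\le\; \Lambda \int_{\Omega_j} (\mathcal{A}_k \nabla \psi)\cdot \nabla \psi \, dx.
\end{equation*}

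Next I would invoke the local boundedness (\ref{locellipticj}) of the mollified matrix $\mathcal{A}_k$ on $\Omega_j$, which gives $(\mathcal{A}_k \nabla \psi)\cdot \nabla \psi \le M_{j+1}|\nabla \psi|^2$ pointwise. Combining this with the previous display yields
\begin{equation*}
-\int_{\Omega_j} \psi^2 \, d\sigma_k \;\le\; \Lambda M_{j+1} \int_{\Omega_j} |\nabla \psi|^2 \, dx,
\end{equation*}
with a constant independent of $k\geq j$.

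Finally, plugging this bound into the estimate (\ref{gradlogest}) of Lemma \ref{lemlog} gives
\begin{equation*}
\int_{\Omega_j} \frac{|\nabla u_k|^2}{u_k^2}\psi^2 \, dx \;\le\; C(M_{j+1},m_{j+1})\bigl(\Lambda M_{j+1}+1\bigr)\int_{\Omega_j} |\nabla \psi|^2 \, dx,
\end{equation*}
which is (\ref{logcaccio}) with a constant depending only on $M_{j+1}$, $m_{j+1}$, and $\Lambda$. I do not expect a real obstacle here: the lemma is essentially a bookkeeping consequence of Lemma \ref{lemlog} once one notices that the mollification lemma is sign-neutral. The only detail to verify carefully is that the choice of $\varepsilon_k$ in the construction of the approximating sequence guarantees $\psi(\,\cdot-t)\in C^\infty_0(\Omega_{j+1})$ for all $t\in B(0,\varepsilon_k)$, which is precisely the hypothesis ensuring Lemma \ref{mollem} can be applied on $\Omega_j$.
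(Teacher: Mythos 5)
Your proof is correct and follows essentially the same route as the paper: reduce via Lemma \ref{lemlog} to bounding $-\int_{\Omega_j}\psi^2\,d\sigma_k$, then push the lower form bound through the mollification and invoke (\ref{locellipticj}). The paper states this step as "follows in precisely the same manner as Lemma \ref{mollem}," while you make the same point slightly more cleanly by applying Lemma \ref{mollem} directly to $-\sigma$; this is only a presentational difference.
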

\begin{proof}  From Lemma \ref{lemlog}, it clearly suffices to show that, for all $\psi\in C^{\infty}_0(\Omega_j)$ we have
$$- \int_{\Omega_j} \psi^2 d\sigma_k \leq M_{j+1} \Lambda\int_{\Omega_j} |\nabla \psi|^2 dx.
$$
But this follows in precisely the same manner as Lemma \ref{mollem}.
\end{proof}
Recalling the definition of capacity in (\ref{capdef}), we arrive at the following corollary of Lemma \ref{lemlogcacc}:
\begin{cor}\label{capcor}  Suppose the hypotheses of Lemma \ref{lemlogcacc} are satisfied.  Then there exists a positive constant $C=C(\Lambda, m_{j+1}, M_{j+1})$, so that whenever $F\subset\subset \Omega_j$, we have
\begin{equation}\label{capcorst}
\int_{F} \frac{|\nabla u_k|^2}{u_k^2} dx \leq C{\rm{cap}}(F, \Omega_{j+1}), \quad \text{ for all }k\geq j.
\end{equation}
\end{cor}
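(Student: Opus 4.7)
The plan is to apply Lemma \ref{lemlogcacc} to a test function that realizes (up to constants) the capacity $\mathrm{cap}(F,\Omega_{j+1})$, and then invoke the variational definition of capacity to take an infimum. Specifically, for any $\psi \in C^\infty_0(\Omega_{j+1})$ satisfying $\psi \geq 1$ on $F$, the chain
\[
\int_F \frac{|\nabla u_k|^2}{u_k^2}\,dx \leq \int_{\Omega_{j+1}} \psi^2\frac{|\nabla u_k|^2}{u_k^2}\,dx \leq C(\Lambda,m_{j+1},M_{j+1})\int_{\Omega_{j+1}} |\nabla \psi|^2\,dx
\]
would yield (\ref{capcorst}) upon infimizing over admissible $\psi$.

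The second inequality above is not literally the statement of Lemma \ref{lemlogcacc}, which is phrased for $\psi\in C^\infty_0(\Omega_j)$. To fix this I would revisit the proof of that lemma: it uses $h = \psi^2/u_k$ as a test function in the weak formulation of (\ref{approxlinear}), which requires only that $u_k$ be bounded away from zero on $\mathrm{supp}\,\psi$ and that the ellipticity bounds (\ref{locellipticj}) hold there. For $k \geq j+1$, $u_k$ is defined on $\Omega_k \supset \Omega_{j+1}$ and satisfies Harnack's inequality throughout $\Omega_{j+1}$, so the argument extends verbatim to $\psi \in C^\infty_0(\Omega_{j+1})$ (a harmless shift $m_{j+1}\to m_{j+2}$, $M_{j+1}\to M_{j+2}$ can be absorbed by relabeling the exhaustion). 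The endpoint case $k = j$ requires a small workaround: starting from admissible $h\in C^\infty_0(\Omega_{j+1})$ (which we may truncate to satisfy $0\le h\le 1$), fix once and for all a cutoff $\eta\in C^\infty_0(\Omega_j)$ that equals $1$ on a neighborhood of $F$, set $\psi = h\eta \in C^\infty_0(\Omega_j)$, and apply the original Lemma \ref{lemlogcacc}. The cross term
\[
\int h^2|\nabla \eta|^2\,dx
\]
can be controlled by $\int_{\Omega_{j+1}}|\nabla h|^2\,dx$ via the Poincar\'e inequality on the bounded domain $\Omega_{j+1}$, with a constant depending only on the fixed cutoff $\eta$ and the geometry of $\Omega_{j+1}$, i.e. only on the fixed exhaustion.

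The argument is essentially a routine unwinding of the variational definition of capacity, and the main conceptual point is recognizing why the capacity appears naturally with respect to $\Omega_{j+1}$ rather than $\Omega_j$: Lemma \ref{lemlogcacc} is phrased for $\psi\in C^\infty_0(\Omega_j)$ only because the initial member $u_j$ of the sequence is defined on $\Omega_j$, but for $k\geq j+1$ the natural class of admissible test functions is $C^\infty_0(\Omega_{j+1})$, which is precisely the class appearing in $\mathrm{cap}(F,\Omega_{j+1})$. The only place where real care is needed is thus the endpoint $k=j$, where one must trade a slightly larger admissible class for a geometric constant via the cutoff/Poincar\'e step described above.
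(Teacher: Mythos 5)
Your core idea — test Lemma \ref{lemlogcacc} against the capacity-realizing function and then pass to the infimum — is surely the intended argument (the paper offers no proof), and you have correctly spotted the one genuine subtlety: the admissible functions for $\mathrm{cap}(F,\Omega_{j+1})$ are supported in $\Omega_{j+1}$, while Lemma \ref{lemlogcacc} is stated for $\psi\in C^\infty_0(\Omega_j)$. Your treatment of $k\geq j+1$ is essentially fine: the proof of Lemmas \ref{lemlog} and \ref{lemlogcacc} goes through with $\psi\in C^\infty_0(\Omega_{j+1})$ because $u_k$ is defined and Harnack-positive on a neighborhood of $\overline{\Omega_{j+1}}$, the only change being that the ellipticity bounds (\ref{locellipticj}) now yield $m_{j+2}, M_{j+2}$ in place of $m_{j+1}, M_{j+1}$. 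Strictly speaking this is not the constant claimed in the corollary, and "relabeling the exhaustion" does not literally repair that, but in the context where the corollary actually sits (Proposition \ref{linexist}, with the global bounds (\ref{elliptic}) so that $m_i\equiv m$, $M_i\equiv M$) the index is immaterial, so this is a harmless quibble.

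The real gap is in your endpoint case $k=j$. The cutoff $\eta\in C^\infty_0(\Omega_j)$ must equal $1$ on a neighborhood of $F$ and hence cannot be chosen independently of $F$: as $F$ exhausts $\Omega_j$, necessarily $\|\nabla\eta\|_{\infty}\to\infty$, and the cross term $\int h^2|\nabla\eta|^2\,dx\le \|\nabla\eta\|^2_\infty\, C_P(\Omega_{j+1})\int_{\Omega_{j+1}}|\nabla h|^2\,dx$ carries a factor that depends on $F$ through $\eta$. This contradicts the claim that the constant depends "only on the fixed exhaustion," and it is not a cosmetic issue: for $F\subset\subset U\subset\subset V$ the quantities $\mathrm{cap}(F,U)$ and $\mathrm{cap}(F,V)$ are not comparable in general (take $U=B(0,1)$, $V=B(0,2)$, $F=\overline{B(0,1-\epsilon)}$; then $\mathrm{cap}(F,U)\to\infty$ as $\epsilon\to 0$ while $\mathrm{cap}(F,V)$ stays bounded), so there is no generic way to upgrade a bound by $\mathrm{cap}(F,\Omega_j)$ to one by $\mathrm{cap}(F,\Omega_{j+1})$ with an $F$-independent constant. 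Two clean repairs: either (a) infimize over $\psi\in C^\infty_0(\Omega_j)$ with $\psi\geq 1$ on $F$, which gives $\int_F |\nabla u_k|^2/u_k^2\,dx\le C(\Lambda,m_{j+1},M_{j+1})\,\mathrm{cap}(F,\Omega_j)$ directly from Lemma \ref{lemlogcacc} with exactly the constant stated — arguably this is what the unproved corollary is intended to convey — or (b) keep $\mathrm{cap}(F,\Omega_{j+1})$ but restrict to $k\geq j+1$, which is what your argument actually establishes.
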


In the case when $\sigma$ only satisfies (\ref{locsobball}), a local Morrey space estimate holds, which is a weakened version of (\ref{logcaccio}):

\begin{lem} \label{localmorrey} Suppose $n\geq 3$, and that $\sigma$ satisfies (\ref{locsobball}).  Consider the sequence $\{u_j\}$ as in (\ref{approxlinear}).  Then for each ball $B(x,r)$ so that $B(x,2r)\subset\Omega_j$, it follows that for all $k> j$
\begin{equation}\label{logballest}
\int_{B(x,r)} \frac{|\nabla u_k|^2}{u_k^2} \, dx \leq C(\Omega_j, M_{j+1}, m_{j+1}) r^{n-2}.
\end{equation}
\end{lem}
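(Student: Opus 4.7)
The strategy parallels Lemma \ref{lemlogcacc}: apply Lemma \ref{lemlog} to a standard cutoff on $B(x,2r)$ and bound the resulting $\sigma_k$-term using the local dual Sobolev hypothesis (\ref{locsobball}) in place of the global lower form bound.

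First I would fix $\psi\in C^\infty_0(B(x,2r))$ with $\psi\equiv 1$ on $B(x,r)$, $0\leq \psi\leq 1$, and $|\nabla\psi|\leq 2/r$. Since $B(x,2r)\subset \Omega_j$ by hypothesis, $\psi\in C^\infty_0(\Omega_j)$, and Lemma \ref{lemlog} yields
\begin{equation*}
\int_{B(x, r)}\frac{|\nabla u_k|^2}{u_k^2}\,dx \leq C\bigl|\langle \sigma_k, \psi^2\rangle\bigr| + C\int_{\Omega_j}|\nabla\psi|^2\,dx,
\end{equation*}
with $C = C(M_{j+1},m_{j+1})$. The gradient term is immediately $\leq Cr^{n-2}$ by the choice of $\psi$, so everything reduces to showing $|\langle \sigma_k, \psi^2\rangle|\leq C(\Omega_j)r^{n-2}$. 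To this end I would first invoke Lemma \ref{molball}: choosing an auxiliary set $V$ with $\Omega_{j+1}\subset\subset V\subset\subset \Omega$ so that $\varepsilon_k < d(V,\partial\Omega)/2$ for all $k>j$ (which holds since $\varepsilon_k\leq \varepsilon_{j+1}$ is small), the convolutions $\sigma_k$ satisfy (\ref{locsobball}) on $\Omega_{j+1}$ with a constant $C_*$ that is uniform in $k>j$.

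The key hurdle is that (\ref{locsobball}) requires a test function to be supported in a ball $B(z,s)$ with $B(z,2s)\subset \Omega_{j+1}$, so applying it directly to $\psi^2\in C^\infty_0(B(x,2r))$ with $z=x$, $s=2r$ demands $B(x,4r)\subset \Omega_{j+1}$, a strictly stronger inclusion than the one given. When $r\leq r_0:=d(\Omega_j,\partial\Omega_{j+1})/4$ this holds, and I obtain
\begin{equation*}
|\langle\sigma_k,\psi^2\rangle|\leq C_*(2r)^{(n-2)/2}||\nabla(\psi^2)||_{L^2}\leq Cr^{n-2}.
\end{equation*}

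For $r>r_0$ the direct application may fail, and I would handle this regime by a bounded-overlap covering of $B(x,r)$ by $N\leq C(r/r_0)^n$ balls $B(y_i,r_0/2)$ centered at $y_i\in B(x,r)$. Since $r\geq r_0$, each $B(y_i,r_0)\subset B(x,2r)\subset \Omega_j$, so the small-$r$ case just established applies at each $y_i$ with radius $r_0/2$ and delivers the bound $Cr_0^{n-2}$ on each piece. Summing and using the a priori bound $r\leq \text{diam}(\Omega_j)/2$ to absorb the factor $(r/r_0)^2$ into a constant depending on $\Omega_j$ yields the desired $Cr^{n-2}$ estimate. The main technical point is exactly this mismatch between the hypothesis $B(x,2r)\subset\Omega_j$ and the stronger $B(x,4r)\subset\Omega_{j+1}$ required for a single-ball application of (\ref{locsobball}), which is bridged by the covering argument above.
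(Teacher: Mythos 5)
Your proof follows the paper's overall strategy: apply Lemma~\ref{lemlog} with a standard cutoff $\psi$ adapted to $B(x,2r)$, then bound $|\langle\sigma_k,\psi^2\rangle|$ through Lemma~\ref{molball} and the local dual Sobolev condition~(\ref{locsobball}). What you add is a repair of a genuine imprecision in the paper's own argument. Since $\psi^2\in C^\infty_0(B(x,2r))$, applying~(\ref{locsobball}) at the ball $B(x,2r)$ requires the concentric enlargement $B(x,4r)$ to lie in the reference set, whereas the lemma's hypothesis only provides $B(x,2r)\subset\Omega_j$. The paper applies Lemma~\ref{molball} with $U=\Omega_j$ without addressing this factor of two; you treat the regime $r\leq r_0:=d(\Omega_j,\partial\Omega_{j+1})/4$ directly (where $B(x,2r)\subset\Omega_j$ does imply $B(x,4r)\subset\Omega_{j+1}$) and handle $r>r_0$ by covering $B(x,r)$ with $N\leq C(n)(r/r_0)^n$ balls of radius $r_0/2$. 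Your accounting is correct: summing gives a surplus factor $(r/r_0)^2$ over $r^{n-2}$, which is bounded by a constant depending only on $\Omega_j$ and $\Omega_{j+1}$ since $r\leq \mathrm{diam}(\Omega_j)/4$. The only small point worth pinning down is the choice of the auxiliary set $V$ in Lemma~\ref{molball} so that $\varepsilon_k<d(V,\partial\Omega)/2$ uniformly in $k>j$; this is arrangeable because $\varepsilon_k$ decreases and $\varepsilon_{j+1}\leq d(\Omega_{j+1},\partial\Omega_{j+2})/2$, so e.g.\ $V=\Omega_{j+2}$ works. In short, your proposal is the paper's argument made rigorous at the one step where the paper is terse, and it reaches the same conclusion.
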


\begin{proof}  Fix such a ball $B(x,r)$ as in the statement of the lemma, and let $\psi \in C^{\infty}_0(B(x,2r))$ such that $\psi \equiv 1$ on $B(x,r)$, $0\leq \psi \leq 1$ and $|\nabla \psi|\leq C/R$. The lemma follows from estimating (\ref{gradlogest}) with this choice of $\psi$.  For $k> j$, it suffices to prove that
\begin{equation}\label{logballest1}
\Bigl| \int_{\Omega_k} \psi^2 \, d\sigma_k \Bigl| \leq C(\Omega_j) r^{n-2}.
\end{equation}
Picking $U = \Omega_j$ in the definition of (\ref{locsobball}),  Lemma \ref{molball} yields
$$\Bigl| \int_{\Omega_k} \psi^2 d\sigma_k \Bigl| \leq C_{(\Omega_j)_{\varepsilon_k}} r^{n-2}||\nabla (\psi^2)||_{L^2(\Omega_j)} \leq 2C_{(\Omega_j)_{\varepsilon_j}}||\nabla \psi||_{L^2(B(x,2r))}.
$$
The last inequality follows since $0\leq \psi\leq 1$.  Here $(\Omega_j)_{\varepsilon_j}$ is the $\varepsilon_j$-neighbourhood of $\Omega_j$.  Note that $(\Omega_j)_{\varepsilon_j} \subset \Omega_{j+1}$, and since $\varepsilon_k<\varepsilon_j$, it follows $(\Omega_j)_{\varepsilon_k} \subset (\Omega_j)_{\varepsilon_j}$ and hence by definition in (\ref{locsobball}) we have $C_{(\Omega_j)_{\varepsilon_k}}\leq C_{(\Omega_j)_{\varepsilon_j}}$.

The display (\ref{logballest1}) now follows from the estimate on the gradient of $\psi$.
\end{proof}

In the case $n=1$ or $2$; note that if $\sigma \in L^{-1,2}_{\text{loc}}(\Omega)$, then for all $k>j$ we have
\begin{equation}\label{lowngrad}
\int_{B(x,r)}\frac{|\nabla u_k|^2}{u_k^2} dx\leq C(\Omega_j, M_{j+1}, m_{j+1}) \text{ whenever }B(x,2r)\subset \Omega_j.
\end{equation}
In fact this estimate holds for all dimensions, but it is not strong enough to provide us with a uniform bound in higher dimensions.  The estimate (\ref{lowngrad}) follows from display (\ref{gradlogest}) in Lemma \ref{lemlog}.  Indeed, for $k>j$, just pick the test function $\psi$ in (\ref{gradlogest}) so that $\psi\equiv 1$ on $\Omega_j$, and $\psi\in C^{\infty}_0(\Omega_{j+1})$.  This yields
$$\int_{\Omega_j} \frac{|\nabla u_k|^2}{u_k^2} dx\leq C(\Omega_j, M_{j+1}, m_{j+1}, ||\sigma||_{L^{-1,2}(\Omega_{j+1})}).
$$
Here we are using the fact that the mollification does not effect the local dual Sobolev norm within $\Omega_j$, which can be established precisely as in Lemma \ref{molball}.  The estimate (\ref{lowngrad}) clearly follows from the previous display.

\subsection{A local gradient estimate}  The key estimate is the following:
\begin{prop}\label{mainprop}Suppose $\sigma$ is a real-valued distribution defined on $\Omega$, satisfying (\ref{introupper}) with $0<\lambda<1$, and in addition suppose that (\ref{locsobball}) holds.  Let $\{u_j\}$ be the sequence in (\ref{approxlinear}).
Then, whenever $\omega \subset\subset \Omega_j$, the following estimate holds:
\begin{equation}\label{apriori}
\int_{\omega} |\nabla u_k|^2 dx\leq C(\omega, \lambda, \Lambda, m_{j+1}, M_{j+1}, B, \Omega_j), \quad \text{ for all } \, k> j.
\end{equation}
\end{prop}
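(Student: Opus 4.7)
The strategy is to exploit the normalization $\int_B u_k^2\, dx = 1$, propagate it to an $L^2$ bound on any $\omega \subset\subset \Omega_j$ via a uniform doubling property for $u_k^2$, and finally recover the gradient estimate via Caccioppoli. The crux of the argument is converting the already established local information into a quantitative doubling constant by means of Proposition~\ref{locdoub}.

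First, I would combine the Caccioppoli inequality of Lemma~\ref{lemcacc} with the Sobolev embedding to prove that $u_k^2$ satisfies a weak reverse H\"older inequality on every ball $B(x,2r) \subset \Omega_j$, with constants depending only on $n$, $m_{j+1}$, $M_{j+1}$, and $\lambda$ (in particular, independent of $k \ge j$). Indeed, Caccioppoli gives $\|\nabla u_k\|_{L^2(B(x,r))}^2 \lesssim r^{-2}\|u_k\|_{L^2(B(x,2r))}^2$, and applying Sobolev's inequality to $u_k \psi$ (for a suitable cutoff) yields
$$\Bigl(\dashint_{B(x,r)} u_k^{2q}\,dx\Bigr)^{1/q} \le C \dashint_{B(x,2r)} u_k^2\, dx,$$
with $q = n/(n-2) > 1$ when $n \ge 3$; for $n \le 2$ any $q > 1$ may be used since $\Omega_{j+1}$ is bounded.

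Second, I would invoke Lemma~\ref{localmorrey} (or the estimate (\ref{lowngrad}) if $n \le 2$) together with the Poincar\'e inequality to show that $\log u_k$ lies in $BMO(\Omega_j')$ for any $\Omega_j' \subset\subset \Omega_j$, uniformly in $k$. The Morrey bound $\int_{B(x,r)} |\nabla \log u_k|^2\, dx \le C r^{n-2}$ combined with Poincar\'e yields
$$\dashint_{B(x,r)} \Bigl|\log u_k - \dashint_{B(x,r)} \log u_k\Bigr|^2 dy \le C,$$
uniformly in $k$, and therefore $\log(u_k^2) = 2 \log u_k$ satisfies the hypothesis (\ref{bmo1}) of Proposition~\ref{locdoub}.

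Third, these two uniform facts place $u_k^2$ exactly in the setting of Proposition~\ref{locdoub}, which then yields a uniform doubling constant for $u_k^2$ on $\Omega_j'$. Using the Harnack chain principle of Remark~\ref{chainarg} along a finite chain of balls inside the connected set $\Omega_j$ joining $B$ to $\omega$ (of fixed radius, chosen small enough that all the enlarged balls sit inside $\Omega_j'$), I propagate the normalization $\int_B u_k^2\, dx = 1$ to obtain $\int_\omega u_k^2\, dx \le C(\omega, B, \Omega_j, \ldots)$, uniformly in $k$. Finally, applying Lemma~\ref{lemcacc} once more with a cutoff $\psi \in C^\infty_0(\omega')$ satisfying $\psi \equiv 1$ on $\omega$, where $\omega \subset\subset \omega' \subset\subset \Omega_j$, reduces $\int_\omega |\nabla u_k|^2\, dx$ to $C \int_{\omega'} u_k^2\, dx$, which is controlled by the previous step.

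The main obstacle is step three: turning the qualitative reverse H\"older plus logarithmic BMO information into a quantitative doubling estimate uniform in $k$. This is exactly what Proposition~\ref{locdoub} accomplishes, and it plays the role that Moser iteration would play in a classical setting; the latter is unavailable here since $u_k$ is not assumed locally bounded as $k \to \infty$. A secondary technical point is bookkeeping of the nested open sets $\omega \subset\subset \omega' \subset\subset \Omega_j' \subset\subset \Omega_j$ so that each of Caccioppoli, Sobolev, Morrey, and the Harnack chain operate with constants that do not degenerate when $k$ grows.
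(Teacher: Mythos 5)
Your proposal follows the same route as the paper's proof: establish a weak reverse H\"older inequality for $u_k^2$ from Lemma~\ref{lemcacc} plus Sobolev embedding, establish a uniform $BMO$ bound for $\log u_k$ from Lemma~\ref{localmorrey} (or (\ref{lowngrad}) in low dimensions) plus Poincar\'e, feed these into Proposition~\ref{locdoub} to get uniform doubling of $u_k^2$, run a Harnack chain from the normalization ball $B$ to $\omega$, and finish with one more application of Caccioppoli. All the key lemmas, the order in which they are applied, and the role of Proposition~\ref{locdoub} as the replacement for Moser iteration match the paper's argument.
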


Note that the estimate (\ref{apriori}) is \textit{independent of $k$} for $k\geq j$.  This is the key to allow us to deduce the existence of positive solutions to (\ref{schrointro}).

%The scheme of the proof is as follows.  With the aid of Lemma \ref{lemlogcacc}, a BMO estimate on the logarithm of $\bar{u}_j$ will be obtained.  The John-Nirenberg lemma then applies to obtain some $L^q$ integrability.  On the other hand, Lemma \ref{lemcacc} allows us to estimate the $L^2$ norm of $\nabla\bar{u}_j$ in terms of the $L^2$ of the $\bar{u}_j$.  Applying the reverse H\"{o}lder lemma, we conclude (\ref{apriori}).

\begin{proof}  Fix $j$, and $k$, as in the statement of the proposition, and let $v= u_k$.  It suffices to prove that whenever $B(x, 8r)\subset\subset \Omega_j$, there exists a positive constant $C>0$, depending on $n$, $\lambda$, $\Lambda$, $m_{j+1}$, $M_{j+1}$, $B$, $B(x,r)$ and $\Omega_j$ such that
\begin{equation}\label{suffintcond}
\int_{B(x,r)} |\nabla v|^2 dx\leq C.
\end{equation}
The reader should keep in mind that \textit{all constants will be independent of $k$}.
Fix such a ball $B(x, 8r)\subset\subset \Omega_j$.  To prove (\ref{suffintcond}), we will employ Proposition \ref{locdoub} in $U = \Omega_j$ to show that $v^2$ is doubling in $\Omega_j$, with constants independent of $k$.  To verify the hypothesis of Proposition \ref{locdoub}, we first show that $v^2$ satisfies a weak reverse H\"{o}lder inequality, i.e. the inequality (\ref{wrh}) holds in $\Omega_j$.  To this end, let us fix $B(z,2s)\subset\subset \Omega_j$.  Suppose first $n\geq 3$.  Let $\psi \in C^{\infty}_{0}(\Omega_j)$, then applying the Sobolev inequality yields
\begin{equation}\label{sobineq}
\Bigl(\int_{\Omega_j}v^{\frac{2n}{n-2}} |\psi|^{\frac{2n}{n-2}}dx\Bigl)^{\frac{n-2}{n}}\leq C \int_{\Omega_j} |\nabla v|^2\psi^2 \, dx + C\int_{\Omega_j} v^2 \, |\nabla\psi|^2 \, dx.
\end{equation}
Applying Lemma \ref{lemcacc} in the first term on the right hand side of (\ref{sobineq}) results in
\begin{equation}\label{caccsob}\Bigl(\int_{\Omega_j}v^{\frac{2n}{n-2}} |\psi|^{\frac{2n}{n-2}}dx\Bigl)^{\frac{n-2}{n}} \leq C \int_{\Omega_j}v^{2}|\nabla \psi|^2 dx.
\end{equation}
We now specialise (\ref{caccsob}) to the case $\psi\in C^{\infty}_0(B(z, 2s))$, with $\psi\equiv1 $ in $B(z,s)$, and $|\nabla\psi|\leq C/s$.  As a result,  we obtain
\begin{equation}\label{l2est}\Bigl(\dashint_{B(z,s)}(v^2)^{\frac{n}{n-2}}dx\Bigl)^{\frac{n-2}{n}} \leq  C\dashint_{B(z,2s)}v^{2} \, dx.
\end{equation}
The constant in $C>0$ in (\ref{l2est}) depends on $n, M_{j+1}, m_{j+1}$, and $\lambda$.  Hence, if $n\geq 3$, (\ref{wrh}) holds in $U = \Omega_j$, with $w=v^2$ and $q=n/(n-2)$.

If $n=2$, we slightly modify the above argument. The following Sobolev inequality is standard (see e.g. \cite{MZ97}, Corollary 1.57): for each $q<\infty$, and for all $f\in C^{\infty}_0(B(z,2s))$,
\begin{equation}\label{sobpeq2}
\Bigl(\dashint_{B(z,2s)} |f(y)|^q \, dy \Bigl)^{1/q} \leq C(q)\Bigl(\int_{B(z,2s)}|\nabla f(y)|^2 \, dy\Bigl)^{1/2}.
\end{equation}
Using (\ref{sobpeq2}) as in (\ref{sobineq}) and following the argument through display (\ref{l2est}), it follows in the case $n=2$ that (\ref{wrh}) holds in $U = \Omega_j$, with $w=v^2$ for any choice $q<\infty$.  Note that in the case $n=1$ even stronger Sobolev inequalities are at our disposal, and so the estimate (\ref{wrh}) continues to hold; we leave this to the reader.

To apply Proposition \ref{locdoub}, it remains to  show $\log(v)\in BMO(\Omega_j)$. For this let $B(z,2s)\subset \Omega_j$. The Poincar\'{e} inequality yields a constant $C=C(n)$ such that
\begin{equation}\begin{split}\nonumber
\dashint_{B(z,s)} |\log v & -\dashint_{B(z,s)} \log v|^2 \, dx\leq Cs^{2-n}\int_{B(z,s)} \frac{|\nabla u_k|^2}{u_k^2} \, dx.
\end{split}\end{equation}
First suppose $n\geq 3$.  Then from Lemma \ref{localmorrey},  we have
\begin{equation}\int_{B(z,s)} \frac{|\nabla u_k|^2}{u_k^2} \,  dx \leq C(M_{j+1}, m_{j+1}, \Omega_j)s^{n-2},
\end{equation}
 and hence,
\begin{equation}\label{bmoexplicit}
\dashint_{B(z,s)} |\log v  - \dashint_{B(z,s)} \log v|^2 dx \leq  C(M_{j+1}, m_{j+1}, \Omega_j).
\end{equation}
In the case $n=1,2$; we apply the weaker estimate (\ref{lowngrad}) in combination with Poincar\'{e}'s inequality, to conclude that (\ref{bmoexplicit}) remains true in these cases.  From (\ref{bmoexplicit}), we conclude (see (\ref{BMOdef})) that $\log v \in BMO(\Omega_j)$, with $BMO$-norm depending only on $n, m_{j+1}, M_{j+1}, \Omega_j$.  In particular, $v^2$ satisfies both (\ref{wrh}) and (\ref{bmo1}) in $\Omega_j$.  From Proposition \ref{locdoub}, it follows that $v^2$ is doubling in $\Omega_j$, with constants depending on $n, m_{j+1}, M_{j+1}, \Omega_j, \lambda$ and $\Lambda$, see (\ref{doub}).

Since $\Omega_j$ is a smooth connected set, one can find a Harnack chain from $B(x,2r)$ to the fixed ball $B\subset\subset \Omega_1$.  In other words, there are positive constants $c_0,\,c_1$ and $N>0$, depending on the smooth parameterization of $\Omega_j$, along with points $x_0, \dots x_N$ and balls $B(x_i, 4r_i)\subset \Omega_j$ so that
\begin{enumerate}
\item $B(x_0, r_0) = B(x,2r)$, and $B(x_N, r_N) = B$;
\item $r_i\geq c_0 \min(r_0, r_N)$, and $|B(x_i, r_i)\cap B(x_{i+1}, r_{i+1})| \geq c_1 \min(r_0, r_N)^n$ for all $i = 0\dots N-1$.
\end{enumerate}
Since $v^2$ is doubling in $\Omega_j$, a Harnack chain argument (see Remark \ref{chainarg}) applied to the chain construction above yields
\begin{equation}\nonumber\dashint_{B(x,2r)} v^2 dx \leq C(B(x, r), m_{j+1}, M_{j+1}, \Omega_j, B, \lambda, \Lambda) \dashint_B v^2 dx.
\end{equation}
It therefore follows from the normalization on $v^2$ that
\begin{equation}\label{vl2est}\dashint_{B(x,2r)} v^2 dx \leq C(B(x, r), m_{j+1}, M_{j+1}, \Omega_j, B, \lambda, \Lambda).
\end{equation}
To complete the proof, combine the Caccioppoli inequality (Lemma \ref{lemcacc}) with the estimate (\ref{vl2est}).  This results in the inequality
$$\int_{B(x, r)}|\nabla v|^2 dx \leq \frac{C}{r^2}\int_{B(x,2r)} v^2 dx\leq C,$$
for a constant $C>0$, depending on $n$, $m_{j+1}$, $M_{j+1}$, $B$, $\Lambda$, $\lambda$, $\Omega_j$ and $B(x, r)$.  Hence (\ref{suffintcond}) is proved for a constant independent of $k$.
\end{proof}

\subsection{Proof of Propositions \ref{refineexist} and \ref{linexist}} \label{existsubsec} We begin by proving Proposition \ref{refineexist}, before moving on to prove Proposition \ref{linexist}, and with it statement (i) of Theorem \ref{gensymthm}.
\begin{proof}[Proof of Proposition \ref{refineexist}]  Let $\Omega_j$ be an exhaustion of $\Omega$ by smooth domains.   We will use Proposition \ref{mainprop} repeatedly in each $\Omega_j$ to deduce the existence of a solution of (\ref{schrointro}).  Fix a ball $B\subset \Omega_1$, with $4B\subset \Omega_1$, and note that the construction of the approximate sequence from (\ref{approxlinear}) is valid under the present assumptions on $\sigma$, and the gradient estimate (\ref{apriori}) holds.

First, by (\ref{apriori}) with $j=1$, along with weak compactness and Rellich's theorem, we pass to a subsequence $\smash{{u}^{(1)}_j}$ of ${u}_j$ so that ${u}_j^{(1)} \rightarrow u^{(1)}$ weakly in $L^{1,2}_{\text{loc}}(\Omega_1)$,  and ${u}^{(1)}_j \rightarrow u^{(1)}$ both in $L^2(\Omega_1)$ and almost everywhere in $\Omega_1 $.  Let $\varepsilon_{j,1}$ be the corresponding sequence from (\ref{approxlinear}).
Since $\sigma \in L^{-1,2}(\Omega_1)$,  it follows that whenever $h\in C^{\infty}_0(\Omega_1)$
\begin{equation}\label{sigmaconv1}
\langle \sigma{u}^{(1)}_j, h \rangle = \langle \sigma,{u}^{(1)}_j h\rangle \rightarrow \langle \sigma,u^{(1)} h\rangle = \langle \sigma u^{(1)}, h\rangle. \end{equation}
Note also, by combining the uniform bound (\ref{apriori}), with convergence of the mollification, we have
\begin{equation}\begin{split}\nonumber|\langle \sigma_{\varepsilon_{j,1}}{u}^{(1)}_j, h \rangle-\langle \sigma {u}^{(1)}_j, h \rangle| & \leq ||\sigma_{\varepsilon_{j,1}}-\sigma||_{L^{-1,2}(\Omega_1)}||\nabla({u}^{(1)}_j h)||_{L^2}\\
& \leq C ||\sigma_{\varepsilon_{j,1}}-\sigma||_{L^{-1,2}(\Omega_1)} \rightarrow 0 \text{ as }j\rightarrow \infty.
\end{split}\end{equation}
We conclude:
\begin{equation}\label{sigmaconv}\langle \sigma_{j,1}{u}^{(1)}_j, h \rangle \rightarrow  \langle \sigma u^{(1)}, h\rangle.\end{equation}

Similarly, by linearity and local boundedness of the operator $\mathcal{A}$, we deduce that
$$\int_{\Omega_1} \mathcal{A}\nabla{u}_j^{(1)} \cdot \nabla h \, dx \rightarrow \int_{\Omega_1} \mathcal{A}\nabla u^{(1)} \cdot \nabla h \, dx.
$$
From the uniform bound (\ref{apriori}), along with standard convergence properties of the mollification in $L^2$:
$$\Bigl|\int_{\Omega_1}( \mathcal{A}_{j,1}-\mathcal{A})\nabla {u}_j^{(1)} \cdot \nabla h \, dx\Bigl|\rightarrow 0 \quad \text{ as }j\rightarrow \infty.
$$
It therefore follows that the limit function $u^{(1)}\in L^{1,2}_{\text{loc}}(\Omega_1)$ satisfies
\begin{equation}\label{1stsub}-{\rm{div}}(\mathcal{A} \nabla u^{(1)}) =\sigma u^{(1)} \quad \text{ in } \mathcal{D}'(\Omega_1).
\end{equation}
Given $\{{u}_k^{(j)}\}_{k}$, let us apply estimate (\ref{apriori}) in $\Omega_{j+1}$ to obtain a subsequence ${u}^{(j+1)}_k$ of $u^{(j)}_k$ and $u^{(j+1)}\in L^{1,2}_{\text{loc}}(\Omega_{j+1})$ with:
$${u}_k^{(j+1)} \rightarrow u^{(j+1)} \text{ weakly in } L^{1,2}_{\text{loc}}(\Omega_{j+1}),  \text{ and } {u}^{(j+1)}_k \rightarrow u^{(j+1)} \text{ a.e. in } \Omega_{j+1} .
$$
Note that by Lemma \ref{localsoblem}, it follows that $\sigma\in L^{-1,2}(\Omega_j)$.   As in the argument leading to (\ref{1stsub}), we see that $u^{(j+1)}$ satisfies
\begin{equation}\label{localsol2}-{\rm{div}}(\mathcal{A} \nabla u^{(j+1)}) =\sigma u^{(j+1)} \text{ in } \mathcal{D}'(\Omega_{j+1}).
\end{equation}
By construction, $u^{(j)} = u^{(j+1)}$ in $\Omega_j$.  Hence one can define $u\in L^{1,2}_{\text{loc}}(\Omega)$ by: $u = u^{(j)}$ in $\Omega_j$.  By (\ref{localsol2}) it follows that
$$-{\rm{div}}(\mathcal{A} \nabla  u) = \sigma u \quad \text{ in } \mathcal{D}'(\Omega).
$$
Next, let us demonstrate that $u$ is not the zero function.  To see this note that
\begin{equation}\label{2nonzero}\int_B({u}^{(\ell)}_j)^2 dx = 1, \quad \text{ for all }j,\,\ell.
\end{equation}
Since ${u}^{(\ell)}_j \rightarrow u$ in $L^2_{\text{loc}}(\Omega_{\ell})$, we may pass to the limit in (\ref{2nonzero}).  A standard application of Mazur's lemma shows that the limit solution $u\geq0$.  On the other hand, for any $k>0$, it follows from Lemma \ref{lemlog} and weak compactness that there exists $v\in L^{1,2}_{\text{loc}}(\Omega)$ so that $\log({u}_j^{(k)})\rightarrow v$ almost everywhere.  But then $v=\log(u)$ a.e. and therefore (see for example Theorem 1.32 of \cite{HKM}) $\log({u}_j^{(k)})$ converges weakly to $\log(u)$ in $L^{1,2}_{\text{loc}}(\Omega_k)$.  Hence $u>0$ quasi-everywhere, and $u$ is a positive weak solution of (\ref{schrointro}).\end{proof}

We now move onto Proposition \ref{linexist}.  Recall that here the matrix $\mathcal{A}$ satisfies the global ellipticity and boundedness conditions, so in all the previous estimates of this section, we replace $m_j=m$ and $M_j=M$.

\begin{proof}[Proof of Proposition \ref{linexist}]  Let us keep the notation from the proof of Proposition \ref{refineexist}.  The existence of a positive solution $u\in L^{1,2}_{\text{loc}}(\Omega)$ of (\ref{schrointro}) follows from Proposition \ref{refineexist}.  It was proved above in addition that $\log(u)$ is well defined in $L^{1,2}_{\text{loc}}(\Omega)$, and in each $\Omega_k$, $\log(u)$ is the weak limit of a sequence $\log u _j^k$.  From Lemma \ref{lemlogcacc}, it follows that, for all $\psi\in C^{\infty}_0(\Omega_k)$,
\begin{equation}\label{locallogest}
\int_{\Omega} \frac{|\nabla {u}_j^k|^2}{({u}_j^{(k)})^2} \psi^2 \, dx \leq C(M, m, \Lambda)\int_{\Omega} |\nabla \psi|^2 \, dx.
\end{equation}
Since $\nabla \log u_j^{(k)} $ converges to $\nabla \log u$ weakly in $L^2_{\text{loc}}(\Omega_k)$, we deduce from weak lower semi-continuity of the $L^2$ norm that for all $\psi\in C^{\infty}_0(\Omega_k)$
\begin{equation}\label{locallogest2}
\int_{\Omega} \frac{|\nabla u|^2}{u^2} \psi^2 \, dx \leq C(M, m, \Lambda)\int_{\Omega} |\nabla \psi|^2 \, dx.
\end{equation}
For any $k>0$, the estimate (\ref{locallogest2}) holds with a uniform constant for smooth functions supported in $\Omega_k$, and hence (\ref{logmult}) holds.
\end{proof}

\subsection{A logarithmic change of variable: solutions of (\ref{ricintro}) from solutions of (\ref{schrointro})}\label{logchange}
This section is concerned with deducing solutions of (\ref{ricintro}) from solutions of (\ref{schrointro}) by a logarithmic substitution.  This substitution is classical, for instance it appears in the study of ODEs in \cite{Hi48}, and there are examples that show it can be delicate, see e.g. \cite{FM00}.   In \cite{AHBV}, there is a rather comprehensive account of the connection between these two types of equations when $\sigma$ is a finite measure.  We will prove the following lemma, from which statement $\rm{(ii)}$ in Theorem \ref{gensymthm} and the remainder of Theorem \ref{refine} follows.
%\begin{proof}[Proof of Theorem \ref{genlinear}, Part b).]
%Let us now prove part b).  We formulate this as a lemma.\end{proof}
\begin{lem}\label{logsublin}  Let $\Omega$ be an open set, and let $\mathcal{A}:\Omega \rightarrow \mathbf{R}^{n\times n}$ satisfy the local conditions (\ref{locelliptic}).  Suppose that $\sigma\in L^{-1,2}_{\text{loc}}(\Omega)$, and that there exists a positive solution $u$ of (\ref{schrointro}). Then $v = \log(u)\in L^{1,2}_{\text{loc}}(\Omega)$ is a solution of (\ref{ricintro}).
\end{lem}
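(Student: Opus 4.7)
The plan is to make rigorous the formal computation
\[
-\mathrm{div}(\mathcal{A}\nabla v) = -\frac{\mathrm{div}(\mathcal{A}\nabla u)}{u} + \frac{\mathcal{A}\nabla u \cdot \nabla u}{u^{2}} = \sigma + \mathcal{A}\nabla v \cdot \nabla v,
\]
where $v=\log u$ and $\nabla v = \nabla u/u$. The $L^{1,2}_{\mathrm{loc}}$ regularity of $v$ will come from the logarithmic Caccioppoli estimate (\ref{class1}) (which is either a standing hypothesis inherited from Theorem \ref{gensymthm}(ii), or obtained by testing (\ref{schrointro}) against a truncated version of $\varphi^{2}/u$); once this is in place, $\nabla v = \nabla u/u$ is well defined in $L^{2}_{\mathrm{loc}}(\Omega)$. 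The essential obstruction is that $1/u$ is not locally bounded, so $\varphi/u$ is not a priori an admissible test function in $L^{1,2}_{c}(\Omega)$; this forces us to truncate $u$ from below.

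Concretely, I would fix $\varphi \in C^{\infty}_{0}(\Omega)$, set $u_{\varepsilon} = \max(u,\varepsilon)$ for $\varepsilon>0$, and use $h_{\varepsilon}=\varphi/u_{\varepsilon} \in L^{1,2}_{c}(\Omega)$ as a test function in the weak formulation $\int \mathcal{A}\nabla u \cdot \nabla h_{\varepsilon}\,dx = \langle \sigma u, h_{\varepsilon}\rangle$. Applying the Stampacchia chain rule $\nabla u_{\varepsilon} = \chi_{\{u>\varepsilon\}}\nabla u$, the left-hand side splits into three pieces:
\[
\int_{\{u>\varepsilon\}} \mathcal{A}\nabla v \cdot \nabla\varphi \, dx - \int_{\{u>\varepsilon\}} (\mathcal{A}\nabla v \cdot \nabla v)\varphi \, dx + \frac{1}{\varepsilon} \int_{\{u \le \varepsilon\}} \mathcal{A}\nabla u \cdot \nabla \varphi \, dx,
\]
while the right-hand side rewrites as $\langle \sigma, \varphi \, u/u_{\varepsilon}\rangle$, since both $u$ and $1/u_{\varepsilon}$ are sufficient multipliers on $\mathrm{supp}\,\varphi$ to move the factor $u$ from $\sigma u$ onto $h_\varepsilon$.

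The final step is to let $\varepsilon \downarrow 0$. Because $u>0$ q.e., $\chi_{\{u>\varepsilon\}}\to 1$ and $\chi_{\{u\le \varepsilon\}}\to 0$ a.e., so the first two integrals converge by dominated convergence, with dominating function supplied by $|\nabla v|^{2}\varphi^{2}\in L^{1}$ from (\ref{class1}). The crucial third integral is controlled by the pointwise bound $|\nabla u|/\varepsilon = u|\nabla v|/\varepsilon \le |\nabla v|$ on the set $\{u \le \varepsilon\}$, so it is dominated by $M\,\|\nabla v\,\chi_{\{u\le\varepsilon\}}\|_{L^{2}}\|\nabla\varphi\|_{L^{2}}$ and vanishes. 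For the right-hand side, an analogous argument shows $\varphi u/u_{\varepsilon} \to \varphi$ strongly in $L^{1,2}_{c}(\Omega)$ (uniform $L^\infty$ bound plus gradient difference supported on $\{u\le\varepsilon\}$ and again controlled by $|\nabla v|$), so the distributional pairing passes to the limit using $\sigma \in L^{-1,2}_{\mathrm{loc}}(\Omega)$. The main obstacle is precisely this passage to the limit on the degenerate set $\{u \le \varepsilon\}$: the identity $|\nabla u|/u = |\nabla v|$, combined with (\ref{class1}), is what converts an apparently singular $1/\varepsilon$ factor into a quantity that localizes harmlessly to a null set.
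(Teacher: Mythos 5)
Your passage to the limit --- the verification that $v$ solves (\ref{ricintro}) once $\nabla v\in L^2_{\text{loc}}(\Omega)$ is in hand --- is correct, and the truncation $u_\varepsilon=\max(u,\varepsilon)$ is a legitimate alternative to the paper's additive regularization $u+\varepsilon$ for that step: the pointwise bound $|\nabla u|/\varepsilon\leq|\nabla u|/u=|\nabla v|$ on $\{u\leq\varepsilon\}$, combined with $|\{u\leq\varepsilon\}\cap\mathrm{supp}\,\varphi|\to 0$, kills the degenerate-set terms by dominated convergence. The problem is what comes before. The lemma assumes only $\sigma\in L^{-1,2}_{\text{loc}}(\Omega)$ and that $u$ is a positive solution; the estimate (\ref{class1}) is \emph{not} a hypothesis, and the local bound $\int_U|\nabla u|^2/u^2\,dx<\infty$ (display (\ref{logdefined}) in the paper) must be \emph{proved}. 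You relegate this to a parenthetical, and the scheme you suggest there --- testing with $\varphi^2/u_\varepsilon$ for $u_\varepsilon=\max(u,\varepsilon)$ --- does not close. Writing $\sigma=\mathrm{div}\,\vec\Gamma$ locally and applying Young's inequality, the resulting estimate has the shape
\begin{equation*}
m\int_{\{u>\varepsilon\}}\frac{|\nabla u|^2}{u^2}\,\varphi^2\,dx
\;\leq\;
\delta\int_{\{u>\varepsilon\}}\frac{|\nabla u|^2}{u^2}\,\varphi^2\,dx
\;+\;\delta\int_{\{u\leq\varepsilon\}}\frac{|\nabla u|^2}{\varepsilon^2}\,\varphi^2\,dx
\;+\;C_\delta(\cdots),
\end{equation*}
and the second $\delta$-term (which arises because $\nabla u_\varepsilon=\chi_{\{u>\varepsilon\}}\nabla u$ forces a $1/\varepsilon$ factor on $\{u\le\varepsilon\}$) lives on the complementary set and cannot be absorbed into the left-hand side. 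It carries no a priori bound uniform in $\varepsilon$, and bounding it by $\int_{\{u\leq\varepsilon\}}|\nabla v|^2\varphi^2\,dx$ is circular.

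This is exactly what the paper's additive regularization is engineered to avoid. With $\psi=\varphi^2/(u+\varepsilon)$ and the identity $\nabla\bigl(\tfrac{u}{u+\varepsilon}\bigr)=\tfrac{\varepsilon}{u+\varepsilon}\cdot\tfrac{\nabla u}{u+\varepsilon}$, where $\tfrac{\varepsilon}{u+\varepsilon}\leq 1$, the only derivative-type quantity that appears anywhere is $|\nabla u|/(u+\varepsilon)$, so every $\delta$-term produced by Young's inequality is a multiple of the single quantity $\int\tfrac{|\nabla u|^2}{(u+\varepsilon)^2}\varphi^2\,dx$ --- finite for each fixed $\varepsilon>0$ and identical to the left-hand side --- and absorption goes through; Fatou's lemma then gives (\ref{logdefined}). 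You should either run the a priori step with $u+\varepsilon$ before switching to your truncation for the limiting argument, or supply some other proof of (\ref{logdefined}); as written, the crux of the lemma is assumed rather than established.
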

\begin{proof} Fix $U\subset\subset \Omega$.  The first step is to prove that
\begin{equation}\label{logdefined}\int_{U} \frac{|\nabla u|^2}{u^2}dx \leq C(U, \sigma, n, p).
\end{equation}
Let $\varepsilon>0$, and let $V$ be such that $U\subset\subset V\subset\subset \Omega$.   For $h\in C^{\infty}_0(V)$, test the weak formulation of (\ref{schrointro}) with $\psi = h (u+\varepsilon)^{-1}\in L^{1,2}_c(V)$.  This yields
\begin{equation}\label{Schro2Ric}\int_{\Omega} \frac{\mathcal{A}(\nabla u)}{u+\varepsilon} \cdot \nabla h \,dx = \int_{\Omega} \frac{\mathcal{A}(\nabla u)\cdot\nabla u}{(u+\varepsilon)^2} h dx + \langle \sigma \frac{u}{u+\varepsilon}, h\rangle.
\end{equation}
Let us now estimate the third term on the right.  By assumption $\sigma \in L^{-1,2}(V)$, and hence there exists $\vec \Gamma \in (L^2(V))^n$ so that $\sigma = \text{div}(\vec\Gamma)$ in $V$.  Therefore
\begin{equation}
 \langle \sigma \frac{u}{u+\varepsilon}, h\rangle = \int_{\Omega} \frac{\nabla u\cdot \vec\Gamma}{u+\varepsilon} \Bigl(\frac{\varepsilon}{u+\varepsilon}\Bigl) h dx + \int_{\Omega} \frac{u}{u+\varepsilon} \nabla h \cdot \vec\Gamma dx.
\end{equation}
Since $\varepsilon /(u+\varepsilon) \leq 1$, it follows from Cauchy's inequality that for any $\delta>0$, we have
\begin{equation}\label{logdefinedstep1}| \langle \sigma \frac{u}{u+\varepsilon}, h\rangle| \leq \delta\int_{\Omega} \frac{|\nabla u|^2}{(u+\varepsilon)^2} h dx + C_{\delta} \int_{\Omega} |\vec\Gamma|^2 h dx + \int_{\Omega} |\nabla h| |\vec\Gamma| dx.
\end{equation}
Now, let  $\varphi\in C^{\infty}_0(V)$, $\varphi\geq 0$, $\varphi\equiv 1$ on $U$, and put $h=\varphi^2$  in (\ref{Schro2Ric}).   Rearranging, using the ellipticity and boundedness assumptions (\ref{elliptic}), we obtain
\begin{equation}\begin{split}\label{logalmostdef}m_V \int_{\Omega}& \frac{|\nabla u|^2}{(u+\varepsilon)^2}\varphi^2 \, dx \leq   2M_V\int_{\Omega} \frac{|\nabla u|}{u+\varepsilon} \, |\nabla \varphi| \, \varphi \,  dx\\
&+  \delta\int_{\Omega} \frac{|\nabla u|^2}{(u+\varepsilon)^2} \varphi^2 \, dx + C_{\delta} \int_{\Omega} |\vec\Gamma|^2 \varphi^2 \, dx + 2\int_{\Omega} |\nabla \varphi| \,  |\vec\Gamma| \, \varphi \, dx.
\end{split}\end{equation}
Here the bound (\ref{logdefinedstep1}) has also been used.
Appealing to Cauchy's inequality again in (\ref{logalmostdef}), we obtain
$$\int_{\Omega} \frac{|\nabla u|^2}{(u+\varepsilon)^2} \varphi^2  \, dx \leq C(U, \sigma, n, p).
$$
Letting $\varepsilon \rightarrow 0$, (\ref{logdefined}) follows from Fatou's lemma.

Now, let us again look at (\ref{Schro2Ric}), this time with an arbitrary $h\in C^{\infty}_0(U)$.  It follows from (\ref{logdefined}) that as $\varepsilon \rightarrow 0$
$$\int_{\Omega} \frac{\mathcal{A}\nabla u}{u+\varepsilon} \cdot \nabla h \,dx \rightarrow \int_{\Omega} \frac{\mathcal{A}\nabla u}{u} \cdot \nabla h \,dx, \text{ and}$$
$$\int_{\Omega} \frac{(\mathcal{A}\nabla u) \cdot \nabla u}{(u+\varepsilon)^2} \, h \, dx\rightarrow \int_{\Omega} \frac{(\mathcal{A}\nabla u) \cdot \nabla u }{u^2}\,  h \, dx.
$$
To handle the last term in (\ref{Schro2Ric}),  note that from (\ref{logdefined}) and the dominated convergence theorem
$$\nabla \Bigl(\frac{u}{u+\varepsilon}\Bigl) = \Bigl(\frac{\varepsilon}{u+\varepsilon}\Bigl)\cdot\frac{\nabla u}{u+\varepsilon}\rightarrow 0  \text{ in }L^2(\Omega) \text{ as } \varepsilon \rightarrow 0,
$$
on the other hand, it is clear that $\frac{u}{u+\varepsilon} \rightarrow 1$ in $L^2(U)$,  as $\varepsilon \rightarrow 0.$
Thus, it follows that $\frac{u}{u+\varepsilon} \rightarrow 1$   in $L^{1,2}(U)$  as $\varepsilon \rightarrow 0.$
But since $\sigma \in L^{-1,2}(U)$ we conclude that
$$\langle \sigma \frac{u}{u+\varepsilon}, h\rangle = \langle \sigma, \frac{u}{u+\varepsilon} h\rangle \rightarrow\langle \sigma, h\rangle, \text{ as } \varepsilon \rightarrow 0.
$$
It follows that $v = \log(u)$ is a distributional solution of (\ref{ricintro}).  \end{proof}

\begin{proof}[Proof of Theorem \ref{gensymthm}, statement {\rm(ii)}]  This is nothing more than a restatement of Lemma \ref{logsublin} above, along with the trivial observation that if $u$ satisfies (\ref{class1}), then $v=\log(u)$ satisfies (\ref{class2}).\end{proof}

We may now also complete the proof of Theorem \ref{refine}.
\begin{proof}[Proof of Theorem \ref{refine}]  By Lemma \ref{locmorlocsobsame} and Proposition \ref{refineexist}, it follows that under the hypothesis of Theorem \ref{refine}, there exists a position solution $u\in L^{1,2}_{\text{loc}}(\Omega)$ of (\ref{schrointro}).  By Lemma \ref{logsublin}, setting $v=\log(u)$, we see that $v\in L^{1,2}_{\text{loc}}(\Omega)$ is a solution of (\ref{ricintro}).
\end{proof}

We did not use (\ref{introlower}) or global ellipticity and boundedness assumptions in the previous lemma, doing so allows us to conclude that the solution satisfies an additional multiplier condition.

\begin{lem}\label{logsubgrad} Under the assumptions of Lemma \ref{logsublin}, suppose that $\mathcal{A}$ satisfies (\ref{elliptic}), and $\sigma$ satisfies (\ref{introlower}) for a positive constant $\Lambda>0$.  Then there exists a solution $v\in L^{1,2}_{\text{loc}}(\Omega)$ of (\ref{ricintro}) satisfying (\ref{class2}).
\end{lem}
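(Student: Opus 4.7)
The plan is to reduce (\ref{class2}) to an estimate on the given positive solution $u$, and then derive that estimate by a test-function calculation. Since $\sigma\in L^{-1,2}_{\text{loc}}(\Omega)$, Lemma \ref{logsublin} already produces $v:=\log u\in L^{1,2}_{\text{loc}}(\Omega)$ solving (\ref{ricintro}). Because $\nabla v=\nabla u/u$, the desired bound (\ref{class2}) for $v$ is equivalent to the logarithmic Caccioppoli inequality (\ref{class1}) for $u$. The task thus becomes proving (\ref{class1}) for $u$, using the hypotheses not exploited in Lemma \ref{logsublin}: the global ellipticity (\ref{elliptic}) and the lower form bound (\ref{introlower}).

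Fix $\varphi\in C^\infty_0(\Omega)$. My approach is to test the equation $-{\rm div}(\mathcal{A}\nabla u)=\sigma u$ with the regularized function $\psi_\varepsilon:=\varphi^2/(u+\varepsilon)\in L^{1,2}_c(\Omega)$, in analogy with the proof of Lemma \ref{lemlogcacc}. Expanding the gradient of $\psi_\varepsilon$ yields the identity
\[
\int_\Omega\frac{(\mathcal{A}\nabla u)\cdot\nabla u}{(u+\varepsilon)^2}\,\varphi^2\,dx
= 2\int_\Omega\frac{\varphi\,(\mathcal{A}\nabla u)\cdot\nabla\varphi}{u+\varepsilon}\,dx-\Big\langle\sigma,\,\frac{u\varphi^2}{u+\varepsilon}\Big\rangle.
\]
The first term on the right is controlled by Cauchy's inequality and the bound $M$, producing a piece $\delta\int\varphi^2|\nabla u|^2/(u+\varepsilon)^2\,dx$ (absorbable for small $\delta$) plus $(M^2/\delta)\int|\nabla\varphi|^2\,dx$.

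The crucial observation is that $u\varphi^2/(u+\varepsilon)=\chi_\varepsilon^2$, where $\chi_\varepsilon:=\varphi\sqrt{u/(u+\varepsilon)}\in L^{1,2}_c(\Omega)\cap L^\infty(\Omega)$. I plan to extend the one-sided form bound (\ref{introlower}) from $C^\infty_0(\Omega)$ to such $\chi_\varepsilon$ via mollification that preserves the uniform $L^\infty$ bound; this extension relies on $\sigma\in L^{-1,2}_{\text{loc}}$ in precisely the same way that the two-sided extension was handled in Section \ref{prelimdist}. It yields
\[
-\Big\langle\sigma,\chi_\varepsilon^2\Big\rangle\le\Lambda M\int_\Omega|\nabla\chi_\varepsilon|^2\,dx,
\]
and a direct computation gives
\[
|\nabla\chi_\varepsilon|^2\le 2|\nabla\varphi|^2+\frac{\varepsilon^2\varphi^2|\nabla u|^2}{2\,u(u+\varepsilon)^3},
\]
where the second summand is pointwise dominated by $\varphi^2|\nabla u|^2/u^2$ (as can be verified by elementary inequalities).

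Assembling the pieces, applying $m$-ellipticity on the left, choosing $\delta<m$, and letting $\varepsilon\to 0^+$ completes the argument: Fatou's lemma handles the left-hand side, while dominated convergence, with dominator $\varphi^2|\nabla u|^2/u^2$ made integrable by the preliminary estimate (\ref{logdefined}) from the proof of Lemma \ref{logsublin}, kills the $\varepsilon$-dependent remainder in $|\nabla\chi_\varepsilon|^2$. The result is (\ref{class1}) with a constant depending only on $m$, $M$, and $\Lambda$, whence (\ref{class2}) follows for $v=\log u$. The main technical obstacle is the careful extension of the one-sided form bound to the non-smooth function $\chi_\varepsilon$ and the verification that the regularization error vanishes in the limit; beyond these, the argument is a standard test-function manipulation.
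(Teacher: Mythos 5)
Your argument is correct and follows the paper's own proof essentially line by line: both test the Schr\"odinger equation with $\varphi^2/(u+\varepsilon)$, identify the $\sigma$-term as $-\langle\sigma,\chi_\varepsilon^2\rangle$ with $\chi_\varepsilon=\varphi\sqrt{u/(u+\varepsilon)}$, apply the lower form bound (\ref{introlower}) to $\chi_\varepsilon$, bound $|\nabla\chi_\varepsilon|^2$ by $|\nabla\varphi|^2$ plus an $\varepsilon$-weighted remainder dominated by $\varphi^2|\nabla u|^2/u^2$, and pass to the limit using (\ref{logdefined}), Fatou, and dominated convergence. The only (minor) difference is expository: you state explicitly the equivalence of (\ref{class2}) for $v$ with (\ref{class1}) for $u$ and the need to justify the extension of (\ref{introlower}) to $L^{1,2}_c\cap L^\infty$ test functions, both of which the paper leaves implicit.
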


\begin{proof}
We will keep the notation from the proof in Lemma \ref{logsublin}.  It is left to prove that $v$ satisfies (\ref{class2}).  To this end, let us again test (\ref{schrointro}) with $\varphi^2/(u+\varepsilon) $, for a smooth test function $\varphi\in C^{\infty}_0(\Omega)$, $\varphi\geq 0$.  There exist constants $C_1, C_2>0$ depending on $m$ and $M$, so that
\begin{equation}\label{linembstep1}\int_{\Omega} \frac{|\nabla u|^2}{(u+\varepsilon)^2} \varphi^2 dx \leq  C_1 \int_{\Omega}|\nabla \varphi|^2 dx -  C_2\Bigl\langle \sigma \varphi\sqrt{\frac{u}{u+\varepsilon}},\varphi\sqrt{\frac{u}{u+\varepsilon}} \Bigl\rangle.
\end{equation}
Indeed, as in display (\ref{Schro2Ric}), one obtains by testing equation (\ref{schrointro}) the following identity:
\begin{equation}\label{Schro2Ric2} 2 \int_{\Omega} \varphi \frac{\mathcal{A}(\nabla u)}{u+\varepsilon} \cdot \nabla \varphi \,dx = \int_{\Omega} \frac{\mathcal{A}(\nabla u)\cdot\nabla u}{(u+\varepsilon)^2} \varphi^2 dx + \langle \sigma \frac{u}{u+\varepsilon}, \varphi^2\rangle
\end{equation}
Hence, from the ellipticity and boundedness assumptions (\ref{elliptic}), we have
\begin{equation}\label{apreselliptic}
m \int_{\Omega} \frac{|\nabla u|^2}{(u+\varepsilon)^2} \varphi^2 \, dx \leq M \int_{\Omega}\frac{|\nabla u|}{u+\varepsilon}\varphi  \cdot |\nabla \varphi| \, dx -  \langle \sigma \frac{u}{u+\varepsilon}, \varphi^2\rangle.
\end{equation}
From an elementary application of Cauchy's inequality in (\ref{apreselliptic}), display (\ref{linembstep1}) follows.   One can pick, for instance, $C_1 = (M/m)^2$ and $C_2 = 2/m$.

Next, applying (\ref{introlower}), it follows that the second term on the right hand side in (\ref{linembstep1}) is bounded by a constant multiple of
\begin{equation}\label{linembstep2}\Lambda \int_{\Omega}\frac{|\nabla u|^2}{u^2} \Bigl(\frac{\varepsilon}{u+\varepsilon}\Bigl)^2 \varphi^2 dx + \Lambda \int_{\Omega} \frac{u}{u+\varepsilon} |\nabla \varphi|^2 dx.
\end{equation}
The first term in (\ref{linembstep2}) converges to zero as $\varepsilon \rightarrow 0$, by virtue of (\ref{logdefined}) and the dominated convergence theorem.  Again by dominated convergence, the second term in (\ref{linembstep2}) converges to
$\int_{\Omega} |\nabla \varphi|^2 dx,$ as $\varepsilon\rightarrow 0.$
Substituting these estimates into display (\ref{linembstep1}), we deduce that (\ref{class2}) holds.
\end{proof}

\subsection{Existence of solutions to (\ref{ricintro}) implies the validity of (\ref{imbintro})}  \label{rictoformbd}

\begin{proof}[Proof of Theorem \ref{gensymthm}, statement {\rm(iii)}]  This will follow immediately from Lemmas \ref{symrefine} through \ref{backwithgrad} below.
\end{proof}

%Suppose that $\mathcal{A}:\Omega\rightarrow \mathbf{R}^{n\times n}$ is a symmetric $n\times n$ valued function satisfying (\ref{elliptic}).  Note that this is the first instance where we have needed to impose symmetry.

\begin{lem}\label{symrefine}  Let $\Omega$ be an open set, and suppose that $\sigma$ is a distribution defined on $\Omega$.  Let $\mathcal{A}$ be an $n\times n$ real-valued symmetric matrix defined on $\Omega$ satisfying (\ref{elliptic}).  Suppose there exists a supersolution $v\in L^{1,2}_{\text{loc}}(\Omega)$ of (\ref{ricintro}), then (\ref{introupper}) holds with $\lambda=1$.
\end{lem}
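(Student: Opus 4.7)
The plan is to test the supersolution inequality against $h^{2}$ where $h \in C_{0}^{\infty}(\Omega)$, then use a pointwise algebraic identity (valid because $\mathcal{A}$ is symmetric) to recognize the right-hand side as $\int \mathcal{A}\nabla h \cdot \nabla h \, dx$ minus a nonnegative error.

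First I would check that $h^{2}$ is an admissible test function in the weak formulation of $-\mathrm{div}(\mathcal{A}\nabla v) \geq \mathcal{A}(\nabla v)\cdot \nabla v + \sigma$. Since $v \in L^{1,2}_{\mathrm{loc}}(\Omega)$ we have $\mathcal{A}\nabla v \in L^{2}_{\mathrm{loc}}(\Omega)^{n}$ and $\mathcal{A}(\nabla v)\cdot \nabla v \in L^{1}_{\mathrm{loc}}(\Omega)$, so by rearranging the (in)equation one sees that $\sigma \in \mathcal{D}'(\Omega)$ pairs with $h^{2} \in C_{0}^{\infty}(\Omega)$ and all terms make sense. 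Since $h^{2} \geq 0$ and $h^{2} \in L^{1,2}_{c}(\Omega)$, the supersolution property yields
\begin{equation*}
\langle \sigma, h^{2}\rangle \;\leq\; \int_{\Omega} \mathcal{A}\nabla v \cdot \nabla(h^{2}) \, dx - \int_{\Omega} \mathcal{A}(\nabla v)\cdot \nabla v \, h^{2} \, dx \;=\; \int_{\Omega}\Bigl(2h\,\mathcal{A}\nabla v \cdot \nabla h - h^{2}\,\mathcal{A}\nabla v \cdot \nabla v\Bigr) dx.
\end{equation*}

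The key step is the symmetric completion of the square. Using that $\mathcal{A}$ is symmetric,
\begin{equation*}
\mathcal{A}(h\nabla v - \nabla h)\cdot(h\nabla v - \nabla h) \;=\; h^{2}\,\mathcal{A}\nabla v\cdot\nabla v \;-\; 2h\,\mathcal{A}\nabla v \cdot \nabla h \;+\; \mathcal{A}\nabla h \cdot \nabla h,
\end{equation*}
so that the integrand in the previous display rearranges as
\begin{equation*}
2h\,\mathcal{A}\nabla v \cdot \nabla h - h^{2}\,\mathcal{A}\nabla v \cdot \nabla v \;=\; \mathcal{A}\nabla h \cdot \nabla h \;-\; \mathcal{A}(h\nabla v - \nabla h)\cdot(h\nabla v - \nabla h).
\end{equation*}
The ellipticity assumption (\ref{elliptic}) makes the second term pointwise nonnegative, so dropping it yields the desired
\begin{equation*}
\langle \sigma, h^{2}\rangle \;\leq\; \int_{\Omega} \mathcal{A}\nabla h \cdot \nabla h \, dx,
\end{equation*}
which is (\ref{introupper}) with $\lambda = 1$.

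The main thing to be careful about is the justification of testing with $h^{2}$: since the supersolution interpretation involves both a distributional divergence term and the locally integrable nonlinearity $\mathcal{A}(\nabla v)\cdot\nabla v$, one must verify that $\sigma$, defined via the inequality $\sigma \leq -\mathrm{div}(\mathcal{A}\nabla v) - \mathcal{A}(\nabla v)\cdot\nabla v$ in $\mathcal{D}'(\Omega)$, can be paired with the nonnegative compactly supported test function $h^{2}$. This is unproblematic here because $h \in C_{0}^{\infty}(\Omega)$, so no approximation argument is needed. The symmetry of $\mathcal{A}$ is essential in the algebraic identity above; in the non-symmetric case the cross term $2h\,\mathcal{A}\nabla v \cdot \nabla h$ cannot be absorbed so cleanly, which is precisely why statement (iii) of Theorem~\ref{gensymthm} degrades to $\lambda = (M/m)^{2}$ in that setting.
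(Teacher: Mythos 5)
Your proof is correct and is essentially the same argument as the paper's: the paper tests with $\varphi^2$ and then applies the Cauchy--Schwarz inequality $|(\mathcal{A}\xi)\cdot\eta| \leq ((\mathcal{A}\xi)\cdot\xi)^{1/2}((\mathcal{A}\eta)\cdot\eta)^{1/2}$ followed by Young's inequality, which is precisely the "completion of the square" identity you write (the nonnegativity of $\mathcal{A}(h\nabla v - \nabla h)\cdot(h\nabla v - \nabla h)$), expressed as two steps rather than one. Your version is, if anything, slightly cleaner since it avoids taking absolute values of the cross term, but the key idea and the reliance on symmetry of $\mathcal{A}$ are identical.
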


\begin{proof} Suppose that there exists a solution $v$ of (\ref{ricintro}), and let $\varphi\in C^{\infty}_0(\Omega)$, then testing (\ref{ricintro}) with $\varphi^2$, yields
$$\langle \sigma \varphi, \varphi \rangle  = \langle \sigma, \varphi^2 \rangle  \leq  2 \int_{\Omega} |\varphi||(\mathcal{A}(\nabla v))\cdot\nabla \varphi |dx - \int_{\Omega} (\mathcal{A}\nabla v)\cdot(\nabla v)\varphi^2 dx.
$$
Under the present assumptions, $\mathcal{A}$ is a symmetric positive definite matrix.  It follows that for $\xi, \eta \in \mathbf{R}^n$,
\begin{equation}\label{symineqvec}|(\mathcal{A}\xi)\cdot \eta| \leq ((\mathcal{A}\xi)\cdot \xi)^{1/2}((\mathcal{A}\eta)\cdot \eta)^{1/2}.
\end{equation}
Thus, we have that
\begin{equation}\begin{split}2\int_{\Omega} |\varphi||(\mathcal{A}(\nabla v))&\cdot\nabla \varphi| dx \leq 2\int_{\Omega} |\varphi| ((\mathcal{A}\nabla v)\cdot \nabla v)^{1/2}((\mathcal{A}\nabla \varphi)\cdot \nabla\varphi)^{1/2} dx \\
& \leq \int_{\Omega} \varphi^2 ((\mathcal{A}\nabla v)\cdot \nabla v) + \int_{\Omega}((\mathcal{A}\nabla \varphi)\cdot \nabla\varphi) dx.
\end{split}\end{equation}
Therefore (\ref{introupper}) holds with $\lambda = 1$.
\end{proof}

On the other hand, if symmetry is not assumed, one may still conclude the validity of (\ref{introupper}) and (\ref{introlower}) from (\ref{ricintro}), as we will show now.

\begin{lem} \label{nonsymcase} Let $\Omega$ be a connected open set, and suppose that $\mathcal{A}:\Omega\rightarrow \mathbf{R}^{n\times n}$, satisfying (\ref{elliptic}).  Suppose that $v\in L^{1,2}_{\text{loc}}(\Omega)$ is a supersolution of (\ref{ricintro}), then (\ref{introupper}) holds with
$$\lambda = \Bigl(\frac{M}{m}\Bigl)^2.$$
\end{lem}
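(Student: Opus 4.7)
The plan is to test the supersolution inequality against the nonnegative test function $\varphi^2$, with $\varphi\in C^{\infty}_0(\Omega)$, and exploit the pointwise bounds $m|\xi|^2\leq (\mathcal{A}\xi)\cdot \xi$ and $|\mathcal{A}\xi|\leq M|\xi|$ from (\ref{elliptic}) in place of the bilinear Cauchy--Schwarz inequality (\ref{symineqvec}) that was available in the symmetric case of Lemma \ref{symrefine}. Exactly as in that lemma, the supersolution property of $v$ gives the integration-by-parts identity
\begin{equation*}
\langle \sigma, \varphi^2\rangle \leq 2\int_{\Omega}\varphi\,(\mathcal{A}\nabla v)\cdot \nabla\varphi \, dx - \int_{\Omega}(\mathcal{A}\nabla v)\cdot\nabla v\,\varphi^2 \, dx.
\end{equation*}
The finiteness of the right-hand side (and hence the legitimacy of using $\varphi^2$ as a test function) follows from $v\in L^{1,2}_{\text{loc}}(\Omega)$, which in particular forces $(\mathcal{A}\nabla v)\cdot\nabla v\in L^{1}_{\text{loc}}(\Omega)$.

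Next I would estimate the cross term using boundedness and Cauchy's inequality with a parameter $\delta>0$, obtaining
\begin{equation*}
2\int_{\Omega}|\varphi|\,|(\mathcal{A}\nabla v)\cdot \nabla\varphi| \, dx \leq 2M\int_{\Omega}|\nabla v|\,\varphi\,|\nabla\varphi| \, dx \leq \delta\int_{\Omega}|\nabla v|^2\varphi^2 \, dx+\frac{M^2}{\delta}\int_{\Omega}|\nabla\varphi|^2 \, dx.
\end{equation*}
On the other hand, ellipticity yields $\int_{\Omega}(\mathcal{A}\nabla v)\cdot\nabla v\,\varphi^2 \, dx\geq m\int_{\Omega}|\nabla v|^2\varphi^2 \, dx$. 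Choosing $\delta=m$ the two $|\nabla v|^2\varphi^2$ contributions cancel exactly, leaving
\begin{equation*}
\langle \sigma, \varphi^2\rangle \leq \frac{M^2}{m}\int_{\Omega}|\nabla\varphi|^2 \, dx.
\end{equation*}

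Finally, to restate this inequality in the quadratic form $\int_{\Omega}(\mathcal{A}\nabla \varphi)\cdot \nabla \varphi \, dx$ required by (\ref{introupper}), I would apply ellipticity once more in the opposite direction: $m|\nabla\varphi|^2\leq (\mathcal{A}\nabla\varphi)\cdot\nabla\varphi$. This gives
\begin{equation*}
\langle \sigma, \varphi^2\rangle \leq \frac{M^2}{m^2}\int_{\Omega}(\mathcal{A}\nabla\varphi)\cdot\nabla\varphi \, dx = \Bigl(\frac{M}{m}\Bigr)^2\int_{\Omega}(\mathcal{A}\nabla\varphi)\cdot\nabla\varphi \, dx,
\end{equation*}
which is precisely (\ref{introupper}) with $\lambda=(M/m)^2$.

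There is no genuine technical obstacle here; the only point worth noting is where the loss of $(M/m)^2$ compared to the sharp constant $\lambda=1$ of the symmetric case comes from. In the symmetric case (\ref{symineqvec}) is tight and one can absorb the cross term directly into $\int(\mathcal{A}\nabla v)\cdot\nabla v\,\varphi^2$ and $\int(\mathcal{A}\nabla\varphi)\cdot\nabla\varphi$ without ever converting to $|\nabla\cdot|^2$. For non-symmetric $\mathcal{A}$ no such bilinear inequality holds, so we are forced to pass through the Euclidean norm twice, each pass costing a factor $M/m$, and these losses compound multiplicatively into the stated constant.
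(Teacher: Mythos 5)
Your proof is correct and follows essentially the same route as the paper's: test the supersolution inequality with $\varphi^2$, bound $(\mathcal{A}\nabla v)\cdot\nabla\varphi$ and $(\mathcal{A}\nabla v)\cdot\nabla v$ using the pointwise ellipticity bounds (\ref{elliptic}), and then apply Young's inequality with parameter $\delta=m$ followed by one final application of ellipticity to $\nabla\varphi$. The paper compresses these steps into a single display, but the computation and the choice of parameter are identical, including the doubled $M/m$ loss you identify.
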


\begin{proof}
Let us first show that (\ref{introupper}) holds with the given choice of $\lambda$.  To this end, let $\varphi \in C^{\infty}_0(\Omega)$ and test the weak formulation (\ref{schrointro}) with the valid test function $\varphi^2$.
Together with the assumptions (\ref{elliptic}), this yields
\begin{equation}\begin{split}\label{embtest}\langle \sigma \varphi, \varphi \rangle  = \langle \sigma, \varphi^2 \rangle& \leq M2 \int_{\Omega} |\nabla v| |\varphi||\nabla \varphi| dx - m\int_{\Omega} |\nabla v|^2\varphi^2 dx\\
& \leq \frac{M^2}{m}\int_{\Omega} |\nabla \varphi|^2 dx \leq \Bigl(\frac{M}{m}\Bigl)^2\int (\mathcal{A}\nabla \varphi) \cdot \nabla \varphi,
\end{split}\end{equation}
where Young's inequality was used in the last line.
\end{proof}

%Combining the results where symmetry of the matrix $\mathcal{A}$ has not been used, one derives the following theorem:

% \begin{thm}\label{nonsymrem}  Let $\Omega$ and $\sigma$ be as in the hypothesis of Theorem \ref{gensymthm}.  Suppose $\mathcal{A}$ is a matrix function defined on $\Omega$ satisfying (\ref{elliptic}).  Then statements $%{\rm(i)}$ and  ${\rm(ii)}$ in Theorem \ref{gensymthm} continue to hold; and statement {\rm(iii)} holds with $\lambda = (M/m)^2$ replacing $1$.  Here $M$ and $m$ are as in (\ref{elliptic}).
 %\end{thm}

%\begin{proof}[Proof of Theorem \ref{nonsymrem}]  Statements (i) and (ii) follow from Proposition \ref{linexist} and Lemma \ref{logsublin}.  Statement (iii) follows from Lemma \ref{nonsymcase}.
%\end{proof}

We conclude this section by showing that if the solution $v$ of (\ref{ricintro}) in addition satisfies (\ref{class2}), then $\sigma$ satisfies (\ref{introlower}) for a positive constant $\Lambda>0$.

\begin{lem}\label{backwithgrad}  Under the assumptions of either Lemma \ref{symrefine} or Lemma \ref{nonsymcase}, if one in addition assumes the solution $v\in L^{1,2}_{\text{loc}}(\Omega)$ satisfies (\ref{class2}),  then $\sigma$ satisfies (\ref{introlower}) for a positive constant $\Lambda>0$.
\end{lem}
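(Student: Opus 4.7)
The plan is to test the Riccati equation (\ref{ricintro}) directly against $h^2$ for $h\in C^\infty_0(\Omega)$. Since $v\in L^{1,2}_{\text{loc}}(\Omega)$ and $h$ has compact support, $h^2\in L^{1,2}_c(\Omega)$ is an admissible test function, and the integrals appearing below are finite by (\ref{class2}) and the boundedness of $\mathcal{A}$. The weak formulation rearranges to
\begin{equation*}
\langle \sigma, h^2\rangle \,=\, 2\int_\Omega h\,(\mathcal{A}\nabla v)\cdot \nabla h\, dx \,-\, \int_\Omega (\mathcal{A}\nabla v)\cdot \nabla v\, h^2\, dx.
\end{equation*}
The strategy now is to bound this expression from below in a way that every appearance of $\nabla v$ is absorbed into a term of the form $\int|\nabla v|^2 h^2\, dx$, to which (\ref{class2}) can be applied.

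First, I would handle the cross term using the boundedness of $\mathcal{A}$ and Cauchy's inequality with parameter $\varepsilon>0$:
\begin{equation*}
\Bigl|2\int_\Omega h\,(\mathcal{A}\nabla v)\cdot \nabla h\, dx\Bigr| \,\le\, 2M\int_\Omega |h|\,|\nabla v|\,|\nabla h|\, dx \,\le\, \varepsilon\int_\Omega |\nabla v|^2 h^2\, dx \,+\, \frac{M^2}{\varepsilon}\int_\Omega |\nabla h|^2\, dx.
\end{equation*}
The Riccati term is bounded above by $M\int_\Omega |\nabla v|^2 h^2\, dx$ using $|\mathcal{A}\xi|\le M|\xi|$. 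Combining these gives
\begin{equation*}
\langle \sigma, h^2\rangle \,\ge\, -(M+\varepsilon)\int_\Omega |\nabla v|^2 h^2\, dx \,-\, \frac{M^2}{\varepsilon}\int_\Omega |\nabla h|^2\, dx.
\end{equation*}

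Next, the hypothesis (\ref{class2}) converts $\int|\nabla v|^2 h^2\, dx$ into a multiple of $\int|\nabla h|^2\, dx$, yielding
\begin{equation*}
\langle \sigma, h^2\rangle \,\ge\, -\Bigl[(M+\varepsilon)C_0 + \tfrac{M^2}{\varepsilon}\Bigr]\int_\Omega |\nabla h|^2\, dx.
\end{equation*}
Finally, the lower ellipticity bound $m|\nabla h|^2 \le (\mathcal{A}\nabla h)\cdot \nabla h$ replaces the integrand on the right by the quadratic form of $\mathcal{A}$, producing (\ref{introlower}) with
\begin{equation*}
\Lambda \,=\, \frac{(M+\varepsilon)C_0 + M^2/\varepsilon}{m}.
\end{equation*}
Choosing any convenient $\varepsilon>0$ (for instance $\varepsilon = M$) gives a positive constant $\Lambda = \Lambda(C_0, m, M) > 0$, as required.

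I do not anticipate a substantive obstacle; the only point requiring care is the justification that $h^2$ is an admissible test function for (\ref{ricintro}), but this is immediate from $v\in L^{1,2}_{\text{loc}}(\Omega)$, $h\in C^\infty_0(\Omega)$, and the fact that (\ref{class2}) guarantees finiteness of the quadratic term $\int(\mathcal{A}\nabla v)\cdot \nabla v\, h^2\, dx$. The argument is symmetric in whether $\mathcal{A}$ is symmetric or not, so it applies uniformly under the hypotheses of either Lemma \ref{symrefine} or Lemma \ref{nonsymcase}.
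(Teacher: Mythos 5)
Your proposal is correct and follows essentially the same route as the paper: test the Riccati equation (\ref{ricintro}) with $h^2$, estimate the cross term by Cauchy's inequality, bound the quadratic term $(\mathcal{A}\nabla v)\cdot\nabla v$ from above by $M|\nabla v|^2$, apply (\ref{class2}) to convert $\int |\nabla v|^2 h^2\,dx$ into $\int|\nabla h|^2\,dx$, and finally invoke the lower ellipticity bound to produce the quadratic form of $\mathcal{A}$ on the right. In fact your writeup is slightly more careful than the paper's, which records $-\int_\Omega |\nabla v|^2\varphi^2\,dx$ where it should read $-\int_\Omega (\mathcal{A}\nabla v)\cdot\nabla v\,\varphi^2\,dx$ (after which the bound $\le M|\nabla v|^2$ is needed, exactly as you have it); your explicit tracking of that step and of the constant $\Lambda(C_0,m,M)$ is a clean presentation of the same argument.
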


\begin{proof}
Let $\varphi \in C^{\infty}_0(\Omega)$ and test the weak formulation (\ref{schrointro}) with the valid test function $\varphi^2$.  Then
\begin{equation}\begin{split}
\langle &\sigma \varphi, \varphi \rangle   = 2 \int_{\Omega} ((\mathcal{A}\nabla v)\cdot \nabla \varphi )\varphi dx - \int_{\Omega} |\nabla v|^2 \varphi^2 dx\\
& \geq -(M+1)\int_{\Omega} |\nabla v|^2 \varphi^2 - \int_{\Omega}|\nabla \varphi|^2 dx \geq -\frac{\Lambda}{m}\int_{\Omega} |\nabla \varphi|^2 dx,
\end{split}\end{equation}
for a suitable choice of constant $\Lambda>0$.  Appealing to (\ref{elliptic}), the result follows.
\end{proof}

\subsection{On the equation (\ref{ricintro})}\label{naturalclass} In this section, we make a few comments regarding our results for the equation (\ref{ricintro}) in comparison to existing literature, as the introduction of this paper focussed rather more on the Schr\"{o}dinger type equation (\ref{schrointro}).  First, we will restate a theorem which has been proved for reference:

\begin{thm}\label{necsufric} Let $\Omega$ be an open set.  Let $\sigma \in D'(\Omega)$ be a real-valued distribution.  Suppose $\mathcal{A}:\Omega \rightarrow\mathbf{R}^n$ is a symmetric real-valued matrix function satisfying (\ref{elliptic}).  The the following two statements hold:

\indent (i)  Suppose that $\sigma$ satisfies (\ref{introlower}) for a positive constant $\Lambda>0$, and (\ref{introupper}) for a constant $0<\lambda<1$, then there exists a positive solution $v\in L^{1,2}_{\text{loc}}(\Omega)$ of (\ref{ricintro}) so that (\ref{class2}) holds.  In addition, the constructed solution has the exponential integrability property: $e^v\in L^{1,2}_{\text{loc}}(\Omega)$.

\indent (ii)  Conversely, if there exists a solution $v\in L^{1,2}_{\text{loc}}(\Omega)$ of (\ref{ricintro}) so that (\ref{class2}) holds, then $\sigma$ satisfies (\ref{introlower}) for a positive constant $\Lambda>0$, and (\ref{introupper}) with $\lambda = 1$.
\end{thm}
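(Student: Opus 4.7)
The proof is essentially an assembly of results already established in the preceding sections; I would organize it as follows.

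For part (i), the plan is to first invoke Proposition \ref{linexist} directly: the hypotheses on $\sigma$ (namely (\ref{introupper}) with $\lambda<1$ and (\ref{introlower}) with $\Lambda>0$) and the global ellipticity (\ref{elliptic}) of the symmetric matrix $\mathcal{A}$ are exactly what Proposition \ref{linexist} requires, so it yields a positive $u\in L^{1,2}_{\text{loc}}(\Omega)$ solving (\ref{schrointro}) and satisfying the logarithmic Caccioppoli inequality (\ref{logmult}). I would then set $v = \log u$. Lemma \ref{localsoblem} shows that (\ref{imbintro}) (which follows by combining the upper and lower form bounds) forces $\sigma \in L^{-1,2}_{\text{loc}}(\Omega)$, so Lemma \ref{logsublin} applies and tells us that $v \in L^{1,2}_{\text{loc}}(\Omega)$ is a distributional solution of (\ref{ricintro}). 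The gradient estimate (\ref{class2}) then follows from Lemma \ref{logsubgrad}, whose hypotheses are met. Finally, the exponential integrability $e^v \in L^{1,2}_{\text{loc}}(\Omega)$ is immediate and is the real payoff of the construction: by definition $e^v = u$, and $u \in L^{1,2}_{\text{loc}}(\Omega)$ was produced by Proposition \ref{linexist}.

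For part (ii), the two conclusions are separated between two lemmas already proved. The upper bound (\ref{introupper}) with $\lambda = 1$ is precisely the content of Lemma \ref{symrefine} (which is where symmetry of $\mathcal{A}$ is used, via the Cauchy-Schwarz-type inequality (\ref{symineqvec})), applied to the given supersolution $v$. The lower bound (\ref{introlower}) with some $\Lambda>0$ is exactly Lemma \ref{backwithgrad}, whose hypotheses include the additional gradient assumption (\ref{class2}) on $v$; it is this assumption that makes the lower bound quantitative.

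There is no real obstacle left: the heavy lifting — the approximating sequence, the weak reverse Hölder / doubling argument in Proposition \ref{mainprop}, the passage to the limit, and the logarithmic substitution — has been done. The one small point worth flagging in writing is that in part (i) one must verify $\sigma \in L^{-1,2}_{\text{loc}}(\Omega)$ before invoking the logarithmic substitution lemmas; this is not part of the stated hypotheses of Theorem \ref{necsufric} but is automatic from Lemma \ref{localsoblem} once both form bounds are in place. The rest is bookkeeping: noting which lemma supplies which conclusion and pasting them together.
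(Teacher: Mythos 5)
Your proof is correct and matches the paper's intention exactly; the paper itself offers no independent proof of Theorem~\ref{necsufric}, merely remarking that it restates results already established, so your assembly of Proposition~\ref{linexist}, Lemma~\ref{localsoblem}, Lemmas~\ref{logsublin} and~\ref{logsubgrad}, and Lemmas~\ref{symrefine} and~\ref{backwithgrad} is precisely the argument the authors have in mind. You correctly flag the one point that deserves explicit mention, namely that $\sigma\in L^{-1,2}_{\text{loc}}(\Omega)$ must be verified (via Lemma~\ref{localsoblem}, once both form bounds are combined into (\ref{imbintro})) before the logarithmic substitution lemmas can be invoked.
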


In \cite{FM98}, solutions of (\ref{ricintro}) are proved in the global energy space $\Lo(\Omega)$ are proved when $\Omega$ is a bounded domain, under the assumption that $\sigma \in L^{n/2}(\Omega)$.  They explicitly note that this condition on $\sigma$ is used to guarantee that (\ref{imbintro}) is valid.  Theorem \ref{necsufric} therefore compliments their theorem with a more local result in nature, and therefore one which requires less restriction on $\sigma$.  As was noticed in \cite{FM98,FM00}, there exist classes of solutions of (\ref{ricintro}) that are exponentially integrable.  One can trace this principle back to the employment of certain nonlinear test functions in proving the existence of solutions to (\ref{ricintro}) (see e.g. \cite{Ev90, FM00}).  A refinement of this argument is what is also employed in the current paper, since we deduce Theorem \ref{necsufric} from our considerations of the Schr\"{o}dinger type equation via a logarithmic substitution.

The local exponential integrability in statement (i) is sharp, as can be seen from the example discussed in Section \ref{examples}.  The paper \cite{FM00} concerns quasilinear equations of $p$-Laplacian type, and we will consider such equations in our forthcoming paper \cite{JMV10a}.

\subsection{On the critical case $\lambda=1$.}\label{criticalcase}
In this subsection we briefly discuss the limiting case when $\lambda=1$. We shall prove the following proposition:

\begin{prop}\label{positivecrit}
Suppose that $\Omega$ is an open set, and suppose $\mathcal{A}$ is a symmetric matrix function satisfying (\ref{elliptic}).  Then (\ref{positive1}) holds if and only if there exists a positive superharmonic function $u$ such that
\begin{equation}\label{supersuper}
-{\rm div }(\mathcal{A}\nabla u) \geq \sigma u \quad  \text{ in }\Omega.
\end{equation}
\end{prop}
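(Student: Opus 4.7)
For the direction \emph{superharmonic implies form bound}, the idea is the ground state transformation. Given a positive $\mathcal{A}$-superharmonic $u$ with $-\mathrm{div}(\mathcal{A}\nabla u)\geq\sigma u$, I will formally set $v=\log u$; using symmetry of $\mathcal{A}$ one has $\nabla u/u=\nabla v$, and the pointwise/distributional identity
\[
  -\mathrm{div}(\mathcal{A}\nabla v)\;\geq\;\mathcal{A}\nabla v\cdot\nabla v+\sigma.
\]
Then for $h\in C^\infty_0(\Omega)$, testing this inequality against $h^2$ and applying Cauchy--Schwarz for the symmetric form $\mathcal{A}$ (exactly as in Lemma~\ref{symrefine}: $|\mathcal{A}\xi\cdot\eta|\leq(\mathcal{A}\xi\cdot\xi)^{1/2}(\mathcal{A}\eta\cdot\eta)^{1/2}$) gives
\[
  2\!\int\!h(\mathcal{A}\nabla v\!\cdot\!\nabla h)\,dx\leq\!\int\!h^2\mathcal{A}\nabla v\!\cdot\!\nabla v\,dx+\!\int\!\mathcal{A}\nabla h\cdot\nabla h\,dx,
\]
and subtracting cancels the quadratic gradient term to yield $\int h^2\,d\sigma\leq\int\mathcal{A}\nabla h\cdot\nabla h\,dx$. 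To make the logarithmic substitution rigorous for a general HKM $\mathcal{A}$-superharmonic $u$ (which need not lie in $L^{1,2}_{\mathrm{loc}}$), I would work instead with $\log(u_k+\varepsilon)$ for the truncations $u_k=\min(u,k)$, which are supersolutions in $L^{1,2}_{\mathrm{loc}}$. Testing the supersolution inequality for $u_k$ against the legitimate test function $h^2/(u_k+\varepsilon)\in L^{1,2}_c(\Omega)$, as in the derivation (\ref{Schro2Ric})--(\ref{logdefined}) of Lemma~\ref{logsublin}, and sending $\varepsilon\downarrow 0$ and $k\uparrow\infty$ by monotone/dominated convergence yields the desired inequality.

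For the converse direction, form bound implies existence of a positive supersolution, I would perturb and pass to a limit. Since $\sigma\geq 0$, the lower form bound (\ref{introlower}) is trivially satisfied with any $\Lambda>0$, and for each integer $n\geq 2$ the measure $(1-\tfrac{1}{n})\sigma$ satisfies the strict upper bound (\ref{introupper}) with $\lambda_n=1-\tfrac{1}{n}<1$. Theorem~\ref{gensymthm}(i)--(ii) therefore produces a positive solution $u_n\in L^{1,2}_{\mathrm{loc}}(\Omega)$ of $-\mathrm{div}(\mathcal{A}\nabla u_n)=(1-\tfrac{1}{n})\sigma u_n$ such that $v_n:=\log u_n\in L^{1,2}_{\mathrm{loc}}(\Omega)$ satisfies the Riccati equation
\[
  -\mathrm{div}(\mathcal{A}\nabla v_n)=\mathcal{A}\nabla v_n\cdot\nabla v_n+(1-\tfrac{1}{n})\sigma
\]
together with the gradient estimate $\int|\nabla v_n|^2\varphi^2\,dx\leq C_0\int|\nabla\varphi|^2\,dx$, where $C_0$ depends only on $n,m,M$ and the (arbitrary but fixed) choice of $\Lambda$. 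Normalizing by $v_n\mapsto v_n-\dashint_B v_n$ (the Riccati equation is invariant under adding a constant), I obtain a sequence uniformly bounded in $L^{1,2}_{\mathrm{loc}}(\Omega)$, and after passing to a subsequence, $v_n\to v$ weakly in $L^{1,2}_{\mathrm{loc}}(\Omega)$ and a.e.

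To close, I pass to the limit distributionally in the Riccati equation. The divergence term converges directly from the weak $L^2_{\mathrm{loc}}$ convergence of $\mathcal{A}\nabla v_n$; the $\sigma$-term converges by construction; and for any $\varphi\in C^\infty_0(\Omega)$ with $\varphi\geq 0$, the convexity of $\xi\mapsto\mathcal{A}\xi\cdot\xi$ combined with Mazur's lemma yields
\[
  \int\varphi\,\mathcal{A}\nabla v\cdot\nabla v\,dx\leq\liminf_n\int\varphi\,\mathcal{A}\nabla v_n\cdot\nabla v_n\,dx.
\]
Hence, as distributions tested against nonnegative $\varphi$,
\[
  -\mathrm{div}(\mathcal{A}\nabla v)\;\geq\;\mathcal{A}\nabla v\cdot\nabla v+\sigma \quad\text{in }\mathcal{D}'(\Omega).
\]
Setting $u=e^v$ and using $\nabla u=u\nabla v$, a direct computation (as in the identity preceding Lemma~\ref{logsublin}) shows $-\mathrm{div}(\mathcal{A}\nabla u)\geq\sigma u$ in $\mathcal{D}'(\Omega)$ with $u>0$ a.e.; the lower-semicontinuous regularization of $u$ is then a positive $\mathcal{A}$-superharmonic function in the HKM sense obeying (\ref{supersuper}).

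\textbf{Main obstacle.} I expect the principal difficulty to lie in the forward ($\Leftarrow$) direction, namely justifying the ground-state substitution $v=\log u$ when the a priori regularity of the HKM $\mathcal{A}$-superharmonic $u$ is only $L^{1,q}_{\mathrm{loc}}$ for $q<n/(n-1)$ rather than $L^{1,2}_{\mathrm{loc}}$. Truncating by $u_k\wedge(1/k)$ from below and from above produces supersolutions in $L^{1,2}_{\mathrm{loc}}$, but one must verify that the distributional inequality $-\mathrm{div}(\mathcal{A}\nabla u_k)\geq\sigma u_k$ is (at least asymptotically) preserved under these truncations, and that the error terms generated on the level sets $\{u=k\}$ vanish in the limit by the coarea formula / Kato's inequality. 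Once these truncation estimates are in place, the algebraic ground-state inequality propagates cleanly from $u_k$ to the limit, and Cauchy--Schwarz for the symmetric form $\mathcal{A}$ delivers (\ref{positive1}).
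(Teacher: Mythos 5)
Your proof splits into the same two directions as the paper, but your route for the harder (``form bound $\Rightarrow$ supersolution'') direction is genuinely different, and as written it has a gap at the final exponentiation step.

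For the easy direction (existence of a supersolution implies (\ref{positive1})), the paper simply cites \cite{Fit00}; your plan to run the ground-state transformation $v=\log u$ on truncations $u_k=\min(u,k)$, test the supersolution inequality against $h^2/(u_k+\varepsilon)$, and pass to the limit is in the right spirit and is essentially a rewording of the computation in Lemma~\ref{lemlog}/Lemma~\ref{logsublin}. (One small misstatement: $\nabla u/u = \nabla v$ is the chain rule and has nothing to do with symmetry of $\mathcal{A}$; symmetry is only needed for the matrix Cauchy--Schwarz.) This part, modulo the truncation bookkeeping you flag, is fine.

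The difficulty is in the direction you treat second. You construct $v_n=\log u_n$ solving the Riccati equation with uniform gradient bound, normalize by subtracting the mean over a ball, extract a weak/a.e.~limit $v\in L^{1,2}_{\rm loc}(\Omega)$, pass to the limit in the Riccati equation via Mazur/convexity to get $-{\rm div}(\mathcal{A}\nabla v)\ge\mathcal{A}\nabla v\cdot\nabla v+\sigma$, and then set $u=e^v$ and take its l.s.c.~regularization. The gap is that you never justify that $e^v$ is locally integrable, let alone that it is a weak supersolution whose l.s.c.~regularization is $\mathcal{A}$-superharmonic in the HKM sense. From $v\in L^{1,2}_{\rm loc}$ one gets $v\in BMO_{\rm loc}$, but the $BMO$-norm of $v$ is of order $C_0^{1/2}$ with $C_0\simeq (M/m)^2$ and is in general not small, so John--Nirenberg only gives $e^{cv}\in L^1_{\rm loc}$ for some small $c=c(n,m,M)\in(0,1)$, \emph{not} $e^{v}\in L^1_{\rm loc}$. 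This is precisely the degeneracy of the critical case $\lambda=1$: the $u_n$'s have no uniform $L^1_{\rm loc}$ (much less $L^{1,2}_{\rm loc}$) bound, because the Caccioppoli constant blows up like $1/(1-\lambda_n)$. The ``direct computation'' turning the Riccati inequality for $v$ into the Schr\"odinger inequality for $u=e^v$ therefore has no distributional meaning as stated.

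The paper's proof avoids this by never leaving the $u_j$ level: each $u_j$ is $\mathcal{A}$-superharmonic by construction, the logarithmic Caccioppoli estimate (\ref{bmoemb}) plus Poincar\'e gives $\log u_j\in BMO(U)$ with a constant depending only on $m,M,n$, John--Nirenberg then makes $u_j^c$ doubling for a uniform small $c$, a Harnack chain and the normalization $\dashint_B u_j=1$ bound $\inf_{B(x,r)}u_j$ from above, and Moser's weak Harnack estimate $\int_{B(x,r)}|\nabla u_j|^q\,dx \lesssim r^{n-q}\inf_{B(x,r)}u_j$ ($q<n/(n-1)$) gives a uniform $W^{1,q}_{\rm loc}$ bound. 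Rellich and the fundamental convergence theorem for superharmonic functions then identify a superharmonic limit $u$, and the supersolution inequality passes to the limit via Fatou together with weak convergence of the Riesz measures. If you want to keep your logarithmic route, you would at minimum need to supplement it with these superharmonicity/weak-Harnack estimates on the $u_n$ themselves to control $e^{v}$ in $L^1_{\rm loc}$; but at that point you are essentially reproducing the paper's argument, and introducing $v_n$ is an unnecessary detour.

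A minor additional remark: the Mazur/convexity step for weak lower semicontinuity of $\int\varphi\,\mathcal{A}\nabla v_n\cdot\nabla v_n$ is correct (this is why symmetry of $\mathcal{A}$ is assumed), but the parallel step in the paper (in the proof of Proposition~\ref{semiprop}) is carried out with the symmetric part $\mathcal{A}^s$ and a product of square roots, which also handles the non-symmetric case; worth noting if you want a version of the proposition without symmetry.
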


\begin{proof}  Let us assume that $\Omega$ is connected.  We shall assume $n\geq 2$, and leave the one dimensional case to the reader.  The necessity is well known, and holds even in very general potential theoretic frameworks, see e.g. \cite{Fit00}.  For the converse, let $\lambda_j \in (0,1)$ be a sequence such that $\lambda_j \rightarrow 1$.  Applying a very special case of Theorem \ref{gensymthm} above, we find a sequence of positive functions $\{u_j\}_j\in L^{1,2}_{\text{loc}}(\Omega)$ of
\begin{equation}\label{approxschr}
-\text{div}(\mathcal{A}\nabla u_j) =\lambda_j \sigma u_j,\; \text{ with }\dashint_B u_j \, dx = 1 .
\end{equation}
Here $B\subset\subset \Omega$ is a fixed ball.  We may assume that $u_j$ is superharmonic.
Next, let us fix a smooth connected subdomain $U$ of $\Omega$, so that $B\subset\subset U$.  For a fixed $q<n/(n-1)$, we will prove that for any ball $B(x,r)\subset\subset U$, the following estimate holds
\begin{equation}\label{locbound}\int_{B(x,r)} |\nabla u_j|^q \, dx \leq C(m, M, U, q, B(x,r)).
\end{equation}
To this end, note by the property (\ref{class1}) that
\begin{equation}\label{bmoemb}\int_{\Omega} \frac{|\nabla u_j|^2}{u_j^2} \, h^2 \, dx\leq C(m, M) \int_{\Omega}|\nabla h|^2 dx  \text{ for all }h\in C^{\infty}_0(\Omega).
\end{equation}
Display (\ref{bmoemb}) is also a standard property of superharmonic functions, as a consequence of Moser's work \cite{Mos60}.   From (\ref{bmoemb}) and the Poincar\'{e} inequality, one readily deduces as in Proposition \ref{mainprop} that $u_j\in BMO(U)$.  Therefore, as in Proposition \ref{locdoub}, from the John-Nirenberg inequality we find a constant $c=c(m, M, n)>0$, $0<c<1$, such that $u^c$ is doubling in $U$ (see (\ref{ld})), with doubling constants depending on $m, M$, and $n$.  For any $B(x,r)\subset\subset U$, a Harnack chain argument yields a constant $C=C((B(x,r), U, B, m, M)$ such that
\begin{equation}\label{smallubd}
\int_{B(x,r)} u^c dx \leq C \int_B u^c dx \leq C(B(x,r), U, B, m, M),
\end{equation}
where in the last equation, we have used normalization of $u_j$ in (\ref{approxschr}).  Let us now note that following inequality superharmonic functions, essentially due to Moser \cite{Mos60}: for $1<q<n/(n-1)$, there exists a constant $C=C(n,q)$ such that
\begin{equation}\label{wkharn}
\int_{B(x,r)}|\nabla u_j|^q \, dx \leq Cr^{n-q} \inf_{B(x,r)} u_j.
\end{equation}
Combining (\ref{wkharn}) and (\ref{smallubd}), the display (\ref{locbound}) follows.  Note that from (\ref{wkharn}) and Rellich's theorem, we deduce that there exists $u$ such that
$u_j \rightarrow u$ a.e in $\Omega$, and
$\dashint_{B} u\,dx =1.$
Furthermore, $u$ can be chosen to be superharmonic, by standard convergence properties, see \cite{KM}.

Our aim is to show that $u = \liminf_{j\rightarrow} u_j$ q.e.  To this end, let $v=\liminf_{j\rightarrow \infty} u_j$, and denote by $v^*$ the lower semi-continuous regularization of $v$.  By the fundamental convergence theorem for superharmonic functions, $v^*$ is superharmonic, and $v^*=\liminf_{j\rightarrow \infty} u_j$ quasi-everywhere (see Theorem 7.4 of \cite{HKM}).  Since $v^*=u$ a.e. and they are both superharmonic, we have that $u=v^*$ everywhere.  The claim follows.

Let us now conclude the argument.  Since $\sigma$ satisfies (\ref{positive1}), it does not charge sets of capacity zero.  By Fatou's lemma, for any $\varphi\in C^{\infty}_0(\Omega)$ with $\varphi\geq0$ it follows that
$$\liminf_{j\rightarrow\infty}\int_{\Omega} u_j \varphi \, d\sigma \geq \int_{\Omega} u\varphi \, d\sigma.
$$
Combining this last display with the weak convergence of the Riesz measure for superharmonic functions, see e.g. \cite{HKM}, we conclude that (\ref{supersuper}) holds.
\end{proof}

\section{Form boundedness}\label{formbdsec}

 In this section we apply Theorem \ref{gensymthm} to deduce a new proof of Theorem \ref{formbounded} below, which was the primary theorem in \cite{MV1} (see also \cite{MV5}).   Let us consider the case $\Omega = \mathbf{R}^n$, $n \ge 3$, since the case of a general domain, under
 certain mild restrictions on $\Omega$, can be reduced to the entire space, as was explained in \cite{MV1}.

\begin{thm}\label{formbounded} Let $\sigma \in D'(\mathbf{R}^n)$, $n \ge 3$. Then the  following statements hold.

{\rm (i)} The quadratic form inequality
\begin{equation}\label{quadineq}
| \langle \sigma, \, h^2 \rangle | \le C \, || \nabla h||^2_{L^2(\mathbf{R}^n)}, \quad \text{ for all }h \in C^\infty_0(\mathbf{R}^n),
\end{equation}
is valid if and only if $\sigma$ can be represented in the form
\begin{equation}\label{represent1}
\sigma = {\rm{div}} \, {\vec \Gamma},
\end{equation}
where $\vec \Gamma \in L^2_{{\rm loc}}(\mathbf{R}^n)^n$ obeys
\begin{equation}\label{properties1}
\int_{\mathbf{R}^n} h^2  |\vec \Gamma|^2  \, dx \le C_1 \, || \nabla h||^2_{L^2(\mathbf{R}^n)}, \quad \text{ for all } h \in C^\infty_0(\mathbf{R}^n).
\end{equation}

{\rm (ii)}  If $\sigma$ satisfies {\rm (\ref{quadineq})},  then  {\rm (\ref{represent1})} holds with $\vec \Gamma = \nabla ( \Delta^{-1} \sigma)$, where $\Delta^{-1} \sigma \in {\rm BMO} (\mathbf{R}^n)$, and
\begin{equation}\label{properties2}
\int_{\mathbf{R}^n} h^2  |\nabla (\Delta^{-1} \sigma)|^2  \, dx \le C_1 \, || \nabla h||^2_{L^2(\mathbf{R}^n)}, \quad  \text{ for all } h \in C^\infty_0(\mathbf{R}^n).
\end{equation}

{\rm (iii)}  If  {\rm (\ref{properties1})}  holds with $C_1=\frac 1 4$ then {\rm (\ref{quadineq})} holds with $C=1$. Conversely, if $\sigma$
satisfies {\rm (\ref{introupper})} with the upper
form bound $\lambda<1$, and  {\rm (\ref{introlower})} with the lower form bound $\Lambda >0$, then {\rm (\ref{properties2})}  holds with a
 constant $C_1$ which does not depend on
$\sigma$.

\end{thm}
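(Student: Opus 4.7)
The sufficiency direction in (i) and the sharp constant $C=1$ in (iii) follow from a direct integration by parts and Cauchy--Schwarz: if $\sigma = \mathrm{div}\,\vec\Gamma$ with (\ref{properties1}), then for every $h \in C^\infty_0(\mathbf{R}^n)$,
\[
|\langle \sigma, h^2\rangle| = \Bigl|2\int_{\mathbf{R}^n} h\, \vec\Gamma\cdot \nabla h\, dx\Bigr| \leq 2\sqrt{C_1}\, \|\nabla h\|_{L^2(\mathbf{R}^n)}^2,
\]
so that $C_1 = \tfrac14$ yields $C=1$.

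For the converse, I would assume the stronger hypotheses of statement (iii): $\sigma$ obeys (\ref{introupper}) with $\lambda < 1$ and (\ref{introlower}) with $\Lambda > 0$. Applying Theorem \ref{gensymthm}(i) with $\mathcal{A} = I$ produces a positive $u \in L^{1,2}_{\mathrm{loc}}(\mathbf{R}^n)$ with $-\Delta u = \sigma u$ and the logarithmic Caccioppoli bound (\ref{class1}). Setting $v = \log u$, Theorem \ref{gensymthm}(ii) furnishes a distributional solution of the Riccati equation
\[
-\Delta v = |\nabla v|^2 + \sigma \quad \text{in } \mathcal{D}'(\mathbf{R}^n),
\]
together with the multiplier bound
\[
\int_{\mathbf{R}^n} |\nabla v|^2\, \varphi^2\, dx \leq C_0 \int_{\mathbf{R}^n} |\nabla \varphi|^2\, dx, \qquad \varphi \in C^\infty_0(\mathbf{R}^n),
\]
where $C_0 = C_0(n, \lambda, \Lambda)$.

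Reading the Riccati equation as $\sigma = \mathrm{div}(-\nabla v) - |\nabla v|^2$ and absorbing the quadratic term into a divergence via $|\nabla v|^2 = \mathrm{div}(\nabla \Delta^{-1}|\nabla v|^2)$ gives the candidate representation
\[
\vec\Gamma := \nabla(\Delta^{-1}\sigma) = -\nabla v - \nabla \Delta^{-1}\bigl(|\nabla v|^2\bigr).
\]
The multiplier bound for the first summand is precisely the displayed Riccati estimate. For the second summand, the measure $\mu := |\nabla v|^2\, dx$ is \emph{nonnegative} and form-bounded with constant $C_0$; I would invoke the positive-measure case of the theorem, which asserts that for such $\mu$ the Riesz potential gradient $\nabla \Delta^{-1}\mu$ again satisfies a multiplier bound with constant depending only on $C_0$ and $n$. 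This positive-measure self-improvement---the substantive technical input here, which is considerably easier than the general distributional case---is standard potential theory: using the pointwise bound $|\nabla \Delta^{-1}\mu(x)| \lesssim I_1 \mu(x)$ valid for $\mu \geq 0$ when $n \geq 3$, it reduces to the Kerman--Sawyer / Wolff-type characterizations of form-bounded positive measures.

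Once this self-improvement is in hand, $\vec\Gamma = \nabla \Delta^{-1}\sigma$ satisfies (\ref{properties2}) with constant controlled purely by $n, \lambda, \Lambda$, delivering (i) together with the uniformity claim of (iii). The BMO assertion in (ii) then follows from (\ref{properties2}) by testing with a cutoff $h$ adapted to any ball $B(x,r)$: this yields the Morrey bound $\int_{B(x,r)} |\nabla \Delta^{-1}\sigma|^2\, dy \lesssim r^{n-2}$, which combined with the Poincar\'e inequality produces $\Delta^{-1}\sigma \in \mathrm{BMO}(\mathbf{R}^n)$. The main obstacle is thus the nonnegative-measure self-improvement step; the rest of the argument is driven entirely by the Riccati substitution $v = \log u$ afforded by Theorem \ref{gensymthm}.
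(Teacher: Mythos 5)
Your proposal follows essentially the same strategy as the paper: sufficiency by integration by parts and Cauchy--Schwarz, and for the converse, pass through the Riccati equation via $v=\log u$ supplied by Theorem~\ref{gensymthm}, decompose $\nabla\Delta^{-1}\sigma=-\nabla v-\nabla\Delta^{-1}(|\nabla v|^2)$, control $-\nabla v$ by the Riccati multiplier bound, and control the second term by a self-improvement result for the nonnegative measure $|\nabla v|^2\,dx$. The only substantive input you invoke as a black box --- that a form-bounded nonnegative measure $\mu$ has $\nabla\Delta^{-1}\mu\in M(L^{1,2}_0\to L^2)$ via the pointwise bound $|\nabla\Delta^{-1}\mu|\lesssim I_1\mu$ --- is exactly what the paper cites as Theorem~1.7 of \cite{Ver99}; your reference to Kerman--Sawyer/Wolff-type characterizations is pointing at the same body of results.

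Two small remarks. First, you work only under the stronger hypotheses $\lambda<1$, $\Lambda>0$; to cover statements~(i) and~(ii) for an arbitrary constant $C$ in (\ref{quadineq}), the paper normalizes by setting $\tilde\sigma=\sigma/(\lambda+\varepsilon)$ to force the upper form bound below~$1$ before applying Theorem~\ref{gensymthm}, and then undoes the scaling --- a step you should include if you want the full theorem. Second, your route to the BMO assertion (establish (\ref{properties2}) first, then derive the Morrey bound $\int_{B(x,r)}|\nabla\Delta^{-1}\sigma|^2\,dy\lesssim r^{n-2}$ and apply Poincar\'e) is actually a clean alternative to the paper's argument, which instead shows $\Psi\in{\rm BMO}$ directly by writing $\Psi=-\Delta^{-1}(|\nabla\Psi|^2)-\Delta^{-1}\tilde\sigma$ and appealing to weak-$\star$ BMO convergence from \cite{MV5}; both are valid, and yours avoids the separate weak-$\star$ approximation argument.
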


\begin{proof}[Proof of Theorem \ref{formbounded}] The sufficiency part of  statement (i)  with
$C_1=C^2/4$ in inequality (\ref{properties1})  follows using integration by parts and Cauchy's inequality: if $\sigma = {\rm{div}} \, {\vec \Gamma}$,
then
\begin{equation}\begin{split}
|  \langle \sigma, \, h^2 \rangle | & = 2 \, \left \vert  \int_{\mathbf{R}^n} \vec \Gamma \cdot \nabla h \, h \, dx \right \vert \\
&\le 2 \,  || h  \vec \Gamma ||_{L^2(\mathbf{R}^n)} ||\nabla h||_{L^2(\mathbf{R}^n)}\le C \, ||\nabla h||^2_{L^2(\mathbf{R}^n)}.
\end{split}\end{equation}

To deduce the remainder of the theorem, we apply part (ii) of Theorem \ref{gensymthm} with $\tilde \sigma  := \sigma/(\lambda+\varepsilon)$, where $\lambda$ is the upper form bound of $\sigma$, and $\varepsilon>0$, so that  the
corresponding upper form bound $\tilde \lambda$ of $\tilde \sigma$ satisfies $\tilde \lambda <1$.  This  yields the existence of a
%nonnegative solution $\Phi\in L^{1,2}_{\text{loc}}(\mathbf{R}^n)$ to the
%Schr\"odinger equation:
%\begin{equation}\label{3.5}
%-\Delta \Phi = \tilde \sigma \, \Phi  \quad {\rm on} \quad \mathbf{R}^n,
%\end{equation}
%such that,
%\begin{equation}\label{3.6}
%\left \Vert  \frac {\nabla \Phi} {\Phi} \right \Vert_{M(\Lo(\mathbf{R}^n) \to L^2(\mathbf{R}^n) )} \le C.
%\end{equation}
%Furthermore, by Lemma \ref{logsublin}, $\Psi = \log \, \Phi \in \L_{{\rm loc}}(\mathbf{R}^n)$
weak solution $\Psi \in L^{1,2}_{{\rm loc}}(\mathbf{R}^n)$ of the multi-dimensional Riccati equation:
\begin{equation}\label{3.7}
- \Delta \Psi  = | \nabla \Psi|^2 + \tilde \sigma  \quad {\rm in} \quad D'(\mathbf{R}^n).
\end{equation}
Using $h^2$, where $h \in C^\infty_0(\mathbf{R}^n)$, as a test function in this equation, and integrating by parts, we estimate
\begin{equation}\begin{split}\nonumber
\int_{\mathbf{R}^n} h^2 \, | \nabla \Psi|^2 dx &= 2 \int_{\mathbf{R}^n} \nabla \Psi \cdot \nabla h \, h \, dx - \langle  \tilde \sigma, h^2 \rangle \\
&\le 2  \,  || h  \nabla \Psi ||_{L^2(\mathbf{R}^n)} ||\nabla h||_{L^2(\mathbf{R}^n)} + \tilde \Lambda \,  ||\nabla h||_{L^2(\mathbf{R}^n)}^2,
\end{split}\end{equation}
where $\tilde \Lambda = \Lambda/(\lambda + \varepsilon)$ is the lower form bound of $\tilde \sigma$.
From this it follows:
\begin{equation}\label{psi-square}
\int_{\mathbf{R}^n} h^2 \, | \nabla \Psi|^2 dx \le (1+ \sqrt{\tilde \Lambda})^2 \, ||\nabla h||_{L^2(\mathbf{R}^n)}^2,  \text{ for all } h \in C^\infty_0(\mathbf{R}^n).
\end{equation}
In other words,
$|\nabla \Psi| \in M(L^{1,2}_0(\mathbf{R}^n) \to L^2(\mathbf{R}^n) )$,
where $M(L^{1,2}_0(\mathbf{R}^n) \to L^2(\mathbf{R}^n) )$ is the space of pointwise multipliers
from the homogeneous Sobolev space $L^{1,2}_0(\mathbf{R}^n)$ into $L^2(\mathbf{R}^n)$
defined in Sec.~\ref{notation}.
 Hence, $\tilde \sigma$ can be represented in the form
$$ \tilde \sigma = - {\rm div} \, \nabla \Psi - | \nabla \Psi|^2, \quad |\nabla \Psi| \in M(L^{1,2}_0(\mathbf{R}^n) \to L^2(\mathbf{R}^n) ).$$
Moreover, one can deduce from  {\rm (\ref{quadineq})}   that $\Delta^{-1} \, \tilde \sigma $ is well
defined in the sense of the weak-$ \star$ ${\rm BMO}$ convergence (see details in \cite{MV5}):
$$
\Delta^{-1} (\psi_N \, \tilde \sigma) \overset{weak-\star} {\longrightarrow} \Delta^{-1} \tilde \sigma \in {\rm BMO} (\mathbf{R}^n),
$$
where $\psi_N (x)= \psi( |x|/N)$, and $\psi\in C^\infty_0(\mathbf{R})$ is a standard cut-off function. It follows from  {\rm (\ref{psi-square})}
that $\Delta^{-1}  \, ( | \nabla \Psi|^2) \in {\rm BMO} (\mathbf{R}^n)$, and hence
\begin{equation}\label{3.8}
\Psi  =  - \Delta^{-1} \, ( | \nabla \Psi|^2)  -  \Delta^{-1} \tilde \sigma \in {\rm BMO} (\mathbf{R}^n).
\end{equation}
Thus, $\tilde \sigma$ can be represented in the form
\begin{equation}\label{3.9}
\tilde \sigma = {\rm div} \, \vec \Gamma, \qquad \vec \Gamma =   \nabla \Delta^{-1} \, \tilde \sigma,
\qquad {\rm in} \quad D'(\mathbf{R}^n),
\end{equation}
where
\begin{equation}\label{3.10}
 \Delta^{-1} \, \tilde \sigma =  - \Psi - \Delta^{-1} \, (  | \nabla \Psi|^2) \in {\rm BMO} (\mathbf{R}^n).
\end{equation}

To complete the proof of statements (ii) and (iii), it remains to verify
$$ \nabla \Delta^{-1} \, \tilde \sigma \in M(L^{1,2}_0(\mathbf{R}^n) \to L^2(\mathbf{R}^n) ).$$
% By  {\rm (\ref{3.6})} ,
%it follows that  $ \nabla \Psi \in M(\Lo(\mathbf{R}^n) \to L^2(\mathbf{R}^n) )$.
Let
$
g = (-\Delta)^{-\frac 1 2} \, | \nabla \Psi|^2 \ge 0.$
 In other words, $g$ is the Riesz potential of order $1$ of $ | \nabla \Psi|^2$, so that
$| \nabla \Delta^{-1} \, (  | \nabla \Psi|^2) (x) | \le c(n) \, g(x)$  for almost every $x\in \mathbf{R}^n.$

Since $ |\nabla \Psi| \in M(L^{1,2}_0(\mathbf{R}^n) \to L^2(\mathbf{R}^n) )$, it follows (see Theorem 1.7 in \cite{Ver99}) that
$g \in M(L^{1,2}(\mathbf{R}^n) \to L^2(\mathbf{R}^n) )$.
Hence, $\nabla \Delta^{-1} \, (  | \nabla \Psi|^2) \in M(L^{1,2}_0(\mathbf{R}^n) \to L^2(\mathbf{R}^n) )$.
Thus,
$$\vec \Gamma =   \nabla \Delta^{-1} \, \tilde \sigma \in M(L^{1,2}_0(\mathbf{R}^n) \to L^2(\mathbf{R}^n) ).$$
This is equivalent to (\ref{properties2}),  and the proof is complete.   \end{proof}

\section{Semi-boundedness}\label{sembdsec}

Let $\Omega\subseteq \textbf{R}^{n}$ be an open set with $n\geq 1$, and suppose $\mathcal{A}$ is a matrix function satisfying (\ref{elliptic}).  In this section, we will consider real-valued distributions $\sigma \in \mathcal{D}'(\Omega)$ which are semi-bounded; that is, the quadratic form of the operator
$\mathcal{H} = -\text{div} (\mathcal{A} \nabla \cdot) - \sigma$ is non-negative:
\begin{equation}\label{semibd}
\langle \sigma, h^2 \rangle \leq \int_{\Omega} (\mathcal{A}\nabla h) \cdot \nabla h \, dx, \text{ for all }h\in  C^{\infty}_0(\Omega).
\end{equation}

It had been conjectured that a necessary and sufficient condition for (\ref{semibd}) to hold is the following condition:  there exist ${\vec{\Gamma}}\in L^2_{\text{loc}}(\Omega)^n$ and a constant $C>0$ so that $\sigma \leq \text{div}(\vec\Gamma)$, and
\begin{equation}\label{conj}
 \int_{\Omega} |h|^2 |\vec\Gamma|^2 \, dx\leq C \int_{\Omega}(\mathcal{A}\nabla h)\cdot \nabla h \, dx \text{ for all }h\in C^{\infty}_0(\Omega).
\end{equation}
A simple estimate using integration by parts and Cauchy's inequality in the form (\ref{symineqvec}) shows that condition (\ref{conj})
with $C=\frac 1 4$  is
sufficient for (\ref{semibd}) to hold. However, it is \textit{not necessary}, with any $C>0$, for (\ref{semibd}).  We defer the proof of this fact to Proposition \ref{conjfalse} below.  On the other hand, the following theorem provides a characterization of semi-bounded distributions.

\begin{thm}\label{semithm}  Let $\Omega$ be an open set, and let $\sigma\in \mathcal{D}'(\Omega)$ be a real valued distribution.  In addition, let $\mathcal{A}$ be a symmetric matrix function defined on $\Omega$ satisfying (\ref{elliptic}).  Then (\ref{semibd}) holds if and only if there exists a vector field ${\vec\Gamma}\in L^2_{\text{loc}}(\Omega)$ so that
\begin{equation}\label{semibdcond}
\sigma \leq {\rm{div}}(\mathcal{A}{\vec \Gamma}) - (\mathcal{A}\vec\Gamma) \cdot \vec\Gamma \qquad \text{in }\mathcal{D}'(\Omega).
\end{equation}
\end{thm}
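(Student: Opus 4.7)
\emph{Sufficiency.} Given $\vec\Gamma \in L^2_{\text{loc}}(\Omega)^n$ with $\sigma \leq \text{div}(\mathcal{A}\vec\Gamma)-(\mathcal{A}\vec\Gamma)\cdot\vec\Gamma$ in $\mathcal{D}'(\Omega)$, I would test the distributional inequality against $h^2$ for $h \in C^\infty_0(\Omega)$ and integrate by parts. The Cauchy--Schwarz inequality in the symmetric bilinear form induced by $\mathcal{A}$, together with $2ab \leq a^2 + b^2$, produces the pointwise bound $2|h(\mathcal{A}\vec\Gamma)\cdot\nabla h|\leq h^2(\mathcal{A}\vec\Gamma)\cdot\vec\Gamma+(\mathcal{A}\nabla h)\cdot\nabla h$, from which $\langle\sigma,h^2\rangle\leq\int(\mathcal{A}\nabla h)\cdot\nabla h\,dx$ follows immediately.

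\emph{Necessity: construction.} The plan is a two-parameter approximation combining mollification $\eta\to 0$ with the critical parameter $\lambda\nearrow 1$. Fix a smooth exhaustion $\Omega_j\subset\subset\Omega_{j+1}$ of $\Omega$; for small $\eta$ set $\sigma_\eta:=\varphi_\eta\ast\sigma$ and $\mathcal{A}_\eta:=\varphi_\eta\ast\mathcal{A}$. By Lemma \ref{mollem}, $\sigma_\eta$ inherits the semi-boundedness property with respect to $\mathcal{A}_\eta$, so $\lambda\sigma_\eta$ satisfies the upper form bound (\ref{introupper}) with constant $\lambda<1$ on $\Omega_j$; since $\sigma_\eta$ is smooth (hence locally bounded), the local condition (\ref{sigmalocmor}) (or $L^{-1,2}_{\text{loc}}$ when $n\leq 2$) is trivial. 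Theorem \ref{refine} then yields a positive $u_{\lambda,\eta,j}\in L^{1,2}_{\text{loc}}(\Omega_j)$ solving $-\text{div}(\mathcal{A}_\eta\nabla u_{\lambda,\eta,j}) = \lambda\sigma_\eta u_{\lambda,\eta,j}$, and Lemma \ref{logsublin} turns $v_{\lambda,\eta,j}:=\log u_{\lambda,\eta,j}$ into an $L^{1,2}_{\text{loc}}(\Omega_j)$-solution of the corresponding Riccati equation. Setting $\vec\Gamma_{\lambda,\eta,j}:=-\nabla v_{\lambda,\eta,j}$, this is precisely the exact identity
\[
\lambda\sigma_\eta = \text{div}(\mathcal{A}_\eta\vec\Gamma_{\lambda,\eta,j}) - (\mathcal{A}_\eta\vec\Gamma_{\lambda,\eta,j})\cdot\vec\Gamma_{\lambda,\eta,j} \quad \text{in }\mathcal{D}'(\Omega_j).
\]

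\emph{Necessity: uniform bound and limit passage.} Testing the Riccati identity against $\varphi^2$ for $\varphi\in C^\infty_0(\Omega_j)$ and applying the symmetric Cauchy--Schwarz inequality with parameter $\tau\in(0,1)$ produces
\[
\tau\int\varphi^2(\mathcal{A}_\eta\nabla v_{\lambda,\eta,j})\cdot\nabla v_{\lambda,\eta,j}\,dx \leq \frac{1}{1-\tau}\int(\mathcal{A}_\eta\nabla\varphi)\cdot\nabla\varphi\,dx - \lambda\langle\sigma_\eta,\varphi^2\rangle.
\]
Since $\langle\sigma_\eta,\varphi^2\rangle\to\langle\sigma,\varphi^2\rangle$ for each fixed $\varphi$ and $\mathcal{A}_\eta\to\mathcal{A}$ strongly in $L^p_{\text{loc}}$, the right-hand side stays bounded by a constant depending only on $\varphi$ as $\lambda\to 1,\eta\to 0$, giving a uniform $L^2(\text{supp}\,\varphi)$-bound on $\vec\Gamma_{\lambda,\eta,j}$ by ellipticity. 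Weak compactness together with a diagonal extraction over $j$ supplies $\vec\Gamma\in L^2_{\text{loc}}(\Omega)$ with $\vec\Gamma_{\lambda_k,\eta_k}\rightharpoonup\vec\Gamma$. In the limit, the divergence term converges in $\mathcal{D}'(\Omega)$ by the standard weak--strong pairing, and $\lambda_k\sigma_{\eta_k}\to\sigma$ in $\mathcal{D}'(\Omega)$, while the nonnegative convex quadratic functional $\vec G\mapsto\int\phi(\mathcal{A}\vec G)\cdot\vec G\,dx$ is only \emph{weakly lower semicontinuous}. This final asymmetry downgrades equality to the desired inequality $\sigma\leq\text{div}(\mathcal{A}\vec\Gamma)-(\mathcal{A}\vec\Gamma)\cdot\vec\Gamma$ in $\mathcal{D}'(\Omega)$.

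\emph{Main obstacle.} The chief difficulty is the absence of any a priori lower form bound on $\sigma$, which prevents a direct application of Theorem \ref{gensymthm} to $\sigma$ itself and renders the usual log-Caccioppoli estimate unavailable. What saves the argument is that the Riccati test-function estimate above requires only control of the pairing $\langle\sigma_\eta,\varphi^2\rangle$ (which converges for each fixed $\varphi$), not a uniform two-sided form bound on $\sigma_\eta$. A secondary technical point is the passage through the quadratic term while $\mathcal{A}_\eta$ varies; this is handled cleanly by first sending $\lambda\to 1$ with $\eta$ fixed (weak-LSC in the fixed matrix $\mathcal{A}_\eta$) to obtain $\sigma_\eta\leq\text{div}(\mathcal{A}_\eta\vec\Gamma_\eta)-(\mathcal{A}_\eta\vec\Gamma_\eta)\cdot\vec\Gamma_\eta$, and then sending $\eta\to 0$ using the strong $L^p_{\text{loc}}$ convergence of $\mathcal{A}_\eta$ together with a standard lower semicontinuity argument.
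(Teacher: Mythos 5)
Your proposal follows essentially the same approach as the paper's Proposition \ref{semiprop}: mollify $\sigma$ and $\mathcal{A}$, scale by $\lambda<1$ so that Theorem \ref{gensymthm}/\ref{refine} applies (the qualitative lower form bound being free since $\sigma_\eta$ is smooth), obtain Riccati solutions, derive the uniform local $L^2$ bound on $\vec\Gamma_{\lambda,\eta}$ by testing against $\varphi^2$ and bounding $\langle\sigma_\eta,\varphi^2\rangle$, and pass to the limit using weak compactness and weak lower semicontinuity of the quadratic term. The only substantive point you compress is the lower semicontinuity step with the matrix itself varying; the paper resolves this by passing to the symmetric part $\mathcal{A}_j^s$, applying the matrix Cauchy--Schwarz inequality (\ref{symineqvec}), and using weak-$\star$ convergence of $\mathcal{A}_j^s$ in $L^\infty$ --- your proposed sequential limit ($\lambda\to1$ with $\eta$ fixed, then $\eta\to0$) simplifies the first stage but still needs that same argument in the second.
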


There is an extension of Theorem \ref{semithm} to non-symmetric matrices $\mathcal{A}$.  Indeed the necessity of the condition (\ref{semibdcond}) extends to the non-symmetric case, see Proposition \ref{semiprop} below.  On the other hand, a repetition of the proof of Lemma \ref{nonsymcase} shows: if (\ref{semibdcond}) holds, then (\ref{semibd}) holds with a constant $(M/m)^2$ introduced in the right hand side.  Here $m$ and $M$ are the ellipticity constants from (\ref{elliptic}).

\begin{rem}  The proof of Theorem \ref{semithm} shows that (\ref{semibd}) holds if and only if there exist solutions to the differential inequality:
\begin{equation}\label{ricineq}-\text{div}(\mathcal{A}\nabla u) -(\mathcal{A}\nabla u)\cdot \nabla u \geq \sigma \qquad \text{ in }\Omega.
\end{equation}
The inequality in (\ref{ricineq}) cannot be strengthened to an equality for general distributions $\sigma\in \mathcal{D}'(\Omega)$.  Indeed, if there exists a solution $v\in L^{1,2}_{\text{loc}}(\Omega)$ of the equation
$$-\text{div}\mathcal{A}\nabla u = (\mathcal{A}\nabla u)\cdot \nabla u + \sigma \qquad\text{in }\Omega,
$$
then it follows that $\sigma \in L^{-1,2}_{\text{loc}}(\Omega)+L^{1}_{\text{loc}}(\Omega).$  For instance, when $n \ge 3$, one can pick $\sigma = -\delta_{x_0}$
 for $x_0\in \Omega$, then obviously  (\ref{semibd}) holds but $\sigma$ does not lie in the aforementioned class.  In fact, in the special case when $\sigma$ is a measure, it is known that $\sigma \in L^{-1,2}_{\text{loc}}(\Omega)+L^{1}_{\text{loc}}(\Omega)$ if and only if $\sigma$ does not charge sets of capacity zero (see Theorem 2.1 of \cite{BGO96}).
\end{rem}

Let us now move onto proving the Theorem:

\begin{proof}[Proof of Theorem \ref{semithm}] The sufficiency of (\ref{semibdcond}) for (\ref{semibd}) is a repetition of the proof of Lemma \ref{symrefine}.  The necessity of (\ref{semibdcond}) is somewhat more involved, and follows from Proposition \ref{semiprop} below.
\end{proof}

\begin{prop}\label{semiprop}Let $\Omega$ be an open set, with $\mathcal{A}$ a (possibly non-symmetric) matrix function defined on $\Omega$ satisfying (\ref{elliptic}).  Let $\sigma \in \mathcal{D}'(\Omega)$ satisfying (\ref{semibd}).  Then there exists $\vec \Gamma \in L^2_{\text{loc}}(\Omega)^n$ so that
\begin{equation}\label{semibdsuff}
\sigma \leq {\rm{div}}(\mathcal{A}{\vec \Gamma}) - (\mathcal{A}\vec\Gamma) \cdot \vec\Gamma \qquad \text{in }\mathcal{D}'(\Omega).
\end{equation}
\end{prop}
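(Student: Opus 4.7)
My plan is to construct $\vec\Gamma$ as $-\nabla v$, where $v\in L^{1,2}_{\text{loc}}(\Omega)$ is the weak limit of logarithms of positive solutions of strictly subcritical approximating Schr\"odinger equations produced by Theorem~\ref{refine}. I would begin by fixing a smooth exhaustion $\Omega_j\subset\subset\Omega_{j+1}\subset\Omega$ with $\bigcup_j\Omega_j=\Omega$, along with $\varepsilon_j>0$ satisfying $\varepsilon_j<\tfrac12\operatorname{dist}(\Omega_j,\partial\Omega_{j+1})$. Setting $\sigma_j:=\varphi_{\varepsilon_j}*\sigma$ and $\mathcal{A}_j:=\varphi_{\varepsilon_j}*\mathcal{A}$, Lemma~\ref{mollem} ensures that $\sigma_j$ inherits the semi-bound on $\Omega_j$ with respect to $\mathcal{A}_j$, so the potential $\tau_j:=(1-1/j)\sigma_j$ satisfies the strict upper form bound with constant $1-1/j<1$. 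Since $\sigma_j$ is smooth and bounded on $\Omega_j$, the local condition~(\ref{sigmalocmor}) of Theorem~\ref{refine} holds trivially, producing a positive solution $u_j\in L^{1,2}_{\text{loc}}(\Omega_j)$ of $-\operatorname{div}(\mathcal{A}_j\nabla u_j)=\tau_j u_j$, and Lemma~\ref{logsublin} supplies $v_j:=\log u_j\in L^{1,2}_{\text{loc}}(\Omega_j)$ satisfying the Riccati equation $-\operatorname{div}(\mathcal{A}_j\nabla v_j)-(\mathcal{A}_j\nabla v_j)\cdot\nabla v_j=\tau_j$. I would further normalize $u_j$ so that $\int_B v_j\,dx=0$ on a fixed ball $B\subset\subset\Omega_1$.

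The critical step is a uniform-in-$j$ Caccioppoli bound on $\nabla v_j$ in $L^2_{\text{loc}}(\Omega)$. Testing the Riccati equation with $\psi^2$ for a cut-off $\psi\in C_0^\infty(\Omega)$ and absorbing the cross term by a Cauchy--Schwarz inequality that exploits the ellipticity and boundedness of $\mathcal{A}_j$ yields
\begin{equation*}
\tfrac{m}{2}\int|\nabla v_j|^2\psi^2\,dx\leq \tfrac{2M^2}{m}\int|\nabla\psi|^2\,dx-\int\tau_j\psi^2\,dx.
\end{equation*}
The main obstacle is the sign-indefinite term $-\int\tau_j\psi^2\,dx$: semi-boundedness controls $\int\sigma h^2$ only from above, so the reversed inequality needed for a direct absorption is unavailable, in contrast with Lemma~\ref{lemlogcacc}. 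The decisive observation is that for each fixed $\psi$, the distributional pairing $\int\tau_j\psi^2\,dx=(1-1/j)\langle\sigma,\varphi_{\varepsilon_j}*\psi^2\rangle$ converges to $\langle\sigma,\psi^2\rangle$ as $j\to\infty$ (since $\varphi_{\varepsilon_j}*\psi^2\to\psi^2$ in $C_0^\infty(\Omega)$) and is therefore bounded by a constant $C_\psi$ depending only on $\psi$ and $\sigma$. Choosing $\psi\equiv1$ on $\omega$ for each $\omega\subset\subset\Omega$ and combining this bound with the Poincar\'e inequality and the normalization of $v_j$, we obtain a uniform estimate $\|v_j\|_{L^{1,2}(\omega)}\leq C(\omega)$.

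With this estimate in hand, Rellich--Kondrachov yields a subsequence with $v_j\to v$ strongly in $L^2_{\text{loc}}(\Omega)$ and $\nabla v_j\rightharpoonup\nabla v$ weakly in $L^2_{\text{loc}}(\Omega)^n$ for some $v\in L^{1,2}_{\text{loc}}(\Omega)$. Testing the Riccati equation against $\varphi\in C_0^\infty(\Omega)$ with $\varphi\geq0$ and passing to the limit, the linear term converges by weak--strong duality (using $\mathcal{A}_j\to\mathcal{A}$ strongly in $L^p_{\text{loc}}$ for every $p<\infty$), the quadratic term $\int(\mathcal{A}_j\nabla v_j)\cdot\nabla v_j\,\varphi\,dx$ is weakly lower semicontinuous in $\nabla v_j$ because its symmetric part is a non-negative quadratic form, and $\int\tau_j\varphi\,dx\to\langle\sigma,\varphi\rangle$. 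The resulting distributional inequality
\begin{equation*}
-\operatorname{div}(\mathcal{A}\nabla v)-(\mathcal{A}\nabla v)\cdot\nabla v\geq\sigma\quad\text{in }\mathcal{D}'(\Omega),
\end{equation*}
together with the identities $\operatorname{div}(\mathcal{A}(-\nabla v))=-\operatorname{div}(\mathcal{A}\nabla v)$ and $(\mathcal{A}(-\nabla v))\cdot(-\nabla v)=(\mathcal{A}\nabla v)\cdot\nabla v$, shows that $\vec\Gamma:=-\nabla v\in L^2_{\text{loc}}(\Omega)^n$ satisfies~(\ref{semibdsuff}), completing the proof.
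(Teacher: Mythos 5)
Your argument is correct and follows the paper's own proof essentially step by step: mollify and rescale by a factor tending to $1$, use Lemma~\ref{mollem} to preserve the strict upper form bound for $\mathcal{A}_j$, pass through the Schr\"odinger-to-Riccati correspondence to get $v_j$ (you invoke Theorem~\ref{refine} plus Lemma~\ref{logsublin} where the paper verifies a crude lower form bound for the smooth mollified potential and applies Theorem~\ref{gensymthm} directly, but these produce the same Riccati solutions), test with $\psi^2$ and absorb via Cauchy's inequality, bound the remaining term $\int\tau_j\psi^2\,dx$ by continuity of the distribution $\sigma$ against the convergent sequence $\varphi_{\varepsilon_j}*\psi^2$ (the paper phrases this via the structure/continuity of distributions in the same spirit), and then pass to the limit using weak lower semicontinuity of the symmetric quadratic form and set $\vec\Gamma=-\nabla v$. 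The only places you are terser than the paper are the diagonal extraction over the exhaustion and the cross-term argument for lower semicontinuity of $\int(\mathcal{A}_j\nabla v_j)\cdot\nabla v_j\,\varphi\,dx$ when $\mathcal{A}_j$ itself varies, but the ideas are the same.
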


\begin{proof}[Proof of Proposition \ref{semiprop}] Without loss of generality, we may assume that $\Omega$ is connected.  Otherwise, we simply repeat the argument which follows in each component.\\
\textit{Step 1} (\textit{Approximation}).  Let $\Omega_j$ be an exhaustion of $\Omega$ by bounded smooth connected domains.  Let $\lambda_j\in (0,1)$ be any sequence so that $\lambda_j\rightarrow 1$, and
$$\varepsilon_j < 1/2\min(d(\Omega_j, \partial\Omega_{j+1}), 2^{-j}).$$
Consider $\sigma_j = \lambda_j \varphi_{\varepsilon_j}*\sigma$, and denote by $\mathcal{A}_j = \varphi_{\varepsilon_j}*\mathcal{A}$.  Then from Lemma \ref{mollem}, it follows that $\sigma_j$ satisfies
\begin{equation}\label{approxupper2}
\int_{\Omega_j}|h|^2 d\sigma_j \leq \lambda_j \int_{\Omega_j} \mathcal{A}_j(\nabla h)\cdot \nabla h \, dx, \quad \text{ for all }h\in C^{\infty}_0(\Omega_j).
\end{equation}
In addition to (\ref{approxupper2}), note the following estimate: for any $h\in C^{\infty}_0(\Omega_j)$
\begin{equation}\label{quallowbd2}\int_{\Omega_j}|h|^2 d\sigma_j\geq- C ||\sigma_j||_{L^{\infty}(\Omega_j)}|\Omega_j|^{2/n}||\nabla h||^2_2,
\end{equation}
this is a consequence of standard Sobolev inequalities (for all $n\geq 1$).  By (\ref{elliptic}), we conclude that
\begin{equation}\label{quallowbd}
\int_{\Omega_j}|h|^2 d\sigma_j \geq - C ||\sigma_j||_{L^{\infty}(\Omega_j)}|\Omega_j|^{2/n}\int_{\Omega_j} \mathcal{A}_j\nabla h \cdot \nabla h \, dx.
\end{equation}
From (\ref{approxupper2}) and (\ref{quallowbd}), we see that the hypothesis of Theorem \ref{gensymthm} are satisfied in $\Omega_j$ with potential $\sigma_j$.  It therefore follows from Theorem \ref{gensymthm} that there exists $v_j\in L^{1,2}_{\text{loc}}(\Omega_j)$ so that
\begin{equation}\label{vjapprox}
-\text{div}(\mathcal{A}_j\nabla v_j) = \mathcal{A}_j\nabla v_j\cdot \nabla v_j +\sigma_j, \qquad \text{in  } \mathcal{D}'(\Omega_j).
\end{equation}
By addition of a suitable constant, we may assume that, for a fixed ball $B\subset\subset \Omega_1$,
\begin{equation}\label{vknorm1}\Bigl| \int_B v_j dx\Bigl| =1, \quad \text{ for all  } j.
\end{equation}
\textit{Step 2} (\textit{A uniform bound}).  Fix $1\leq j \leq k$.  Our aim is to show that $v_k\in L^{1,2}_{\text{loc}}(\Omega_j)$, with constants independent of $k$. Let $h\in C^{\infty}_0(\Omega_j)$, by testing the weak formulation of $v_k$ in (\ref{vjapprox}) with $h^2$, we deduce from (\ref{elliptic}) that
\begin{equation}\nonumber m\int_{\Omega_k} |\nabla v_k|^2 h^2 dx \leq M \int_{\Omega_k}2h |\nabla v_k||\nabla h| - \int_{\Omega_k} h^2 d\sigma_k .
\end{equation}
Applying Cauchy's inequality in the first term on the right hand side:
$$m\int_{\Omega_k} |\nabla v_k|^2 h^2 dx \leq \frac{m}{2} \int_{\Omega_k} |\nabla v_k|^2 h^2 dx + 2\frac{M^2}{m}\int_{\Omega_k}|\nabla h|^2 dx - \int_{\Omega_k} h^2 d\sigma_k,
$$
and hence, as $\lambda_k \in (0,1)$, we have
\begin{equation}\label{vjbd1}
\int_{\Omega_k} |\nabla v_k|^2 h^2 dx \leq C\int_{\Omega_k}|\nabla h|^2 dx + C| \langle \varphi_{\varepsilon_k} * h^2, \sigma\rangle | .
\end{equation}
Next from standard distribution theory (see e.g. \cite{Str03}, Chapter 8), it follows that
$$| \langle \varphi_{\varepsilon_k} * h^2, \sigma\rangle | \leq C,
$$
for a constant $C$ depending on $\sigma$, the support of $h$ and $||\partial^{\alpha_{\ell}}h||_{L^{\infty}}$ for some collection of multi-indices $\alpha_1, \dots, \alpha_N$.  (One can see this from either the structure theorem, or by the definition of continuity).  In conclusion, for any $h\in C^{\infty}_0(\Omega_j)$
\begin{equation}\label{vjuniform}
\int_{\Omega_k} |\nabla v_k|^2 h^2 dx \leq C(\sigma, h).
\end{equation}
This proves the claim that $v_k\in L^{1,2}_{\text{loc}}(\Omega_j)$, with constants independent of $k$.

\textit{Step 3} (\textit{Conclusion}). This will be quite similar to Section \ref{existsubsec}.  Indeed, consider first $\Omega_1$.  Then from (\ref{vjuniform}) and weak compactness, we find a subsequence $v_{j,1}$ of $v_j$, and $v^1\in L^{1,2}_{\text{loc}}(\Omega_1)$ so that $v_{j,1}\rightarrow v^1$ weakly in $L^{1,2}_{\text{loc}}(\Omega_1)$.  From (\ref{vknorm1}) and an application of Rellich's theorem, the limit function $v_1$ is not identically infinite.

Let $\vec G\in (L^2(\Omega_1))^n\cap L^{\infty}$, with compact support in $\Omega_1$.  Note that
\begin{equation}\label{ajconv}\int_{\Omega}\mathcal{A}_{j,1}\nabla v_{j,1} \cdot \vec G \, dx = \int_{\Omega}((\mathcal{A}_{j,1}-\mathcal{A})\nabla v_{j,1}) \cdot \vec G \, dx  +\int_{\Omega}\mathcal{A}\nabla v_{j,1} \cdot \vec G\,  dx.
\end{equation}
The first term on the right of (\ref{ajconv}) converges to zero as $k\rightarrow \infty$.  Indeed, one can estimate:
$$ \Bigl|\int_{\Omega}((\mathcal{A}_{j,1}-\mathcal{A})\nabla v_{j,1}) \cdot \vec G \,  dx\Bigl|
$$
$$
\leq ||\vec G||_{\infty}\Bigl(\int_{\text{supp}(\vec G)}|\nabla v_{j,1}|^2dx\Bigl)^{1/2}\Bigl(\int_{\text{supp}(\vec G)}|\mathcal{A}_{j,1}-\mathcal{A}|^2 dx\Bigl)^{1/2},
$$
and the right hand side converges to zero by (\ref{vjuniform}) and standard properties of approximate identities.  For the second term on the right hand side of (\ref{ajconv}), note that by weak convergence
$$\int_{\Omega}\mathcal{A}\nabla v_{j,1} \cdot \vec G \,  dx \rightarrow \int_{\Omega} \mathcal{A}\nabla v^1\cdot \vec G \,  dx, \quad \text{ as } \, j\rightarrow \infty.$$
It follows that, for all $\vec G\in (L^2(\Omega_1))^n\cap L^{\infty}$, with compact support in $\Omega_1$,
\begin{equation}\label{ajconv2}\int_{\Omega}\mathcal{A}_{j,1}\nabla v_{j,1} \cdot \vec G \,
 dx \rightarrow \int_{\Omega}\mathcal{A}\nabla v^1 \cdot \vec G \,  dx, \quad \text{ as } \, j\rightarrow \infty.
\end{equation}
It is not difficult to see that one can extend (\ref{ajconv2}) for all $\vec G \in (L^2(\Omega_1))^n$, with compact support in $\Omega_1$.  In other words, that $\mathcal{A}_{j,1}\nabla v_{j,1} \rightarrow \mathcal{A}\nabla v^1$ weakly in $L^2_{\text{loc}}(\Omega_1)^n$.

%Now note from (\ref{vjuniform}), $\mathcal{A}_{j,1}\nabla v_{j,1}$ is locally bounded in $L^2(\Omega_1)$, and so passing to a subsequence (still denoted by $\mathcal{A}_{j,1}\nabla v_{j,1}$), we can find $\vec w$ so that $%\mathcal{A}_j\nabla v_{j,1}$ converges weakly in $L^2_{\text{loc}}(\Omega_1)$ to $w$.  From (\ref{ajconv2}), it follows that:
%$$ \int_{\Omega}\mathcal{A}\nabla v^1 \cdot g dx = \int_{\Omega}\vec{w} \cdot g dx, \text{ for all }g\in (L^2(\Omega_1))^n\cap L^{\infty},
%$$
%and hence in particular $w = \mathcal{A}\nabla v^1$ a.e. in $\Omega_1$.
Let $h\in C^{\infty}_0(\Omega_1)$ so that $h\geq 0$.  We next claim that
\begin{equation}\label{lowweaksemi}\liminf_{j\rightarrow\infty}\int_{\Omega}(\mathcal{A}_{j,1}\nabla v_{j,1})\cdot \nabla v_{j,1} h \, dx \geq \int_{\Omega} (\mathcal{A}\nabla v^1)\cdot \nabla v^1 h \, dx.
\end{equation}
To see this, denote by $\mathcal{A}^s$ the symmetric part of $\mathcal{A}$, i.e. $2\mathcal{A}^s = \mathcal{A} + \mathcal{A}^t$.  Then as before it follows $(\mathcal{A}_{j,1})^s\nabla v_{j,1}\rightarrow \mathcal{A}^s\nabla v^1$ weakly in $L^2_{\text{loc}}(\Omega_1)$.  In addition, by standard properties of mollification, $(\mathcal{A}_{j,1})^s\rightarrow \mathcal{A}^s$ in the weak-$\star$ topology of $L^{\infty}(\Omega_1)$.
We will repeatedly use the observation that the non-symmetric part does not contribute toward the quadratic form.  First note that as a result of the weak convergence, we have
\begin{equation}\begin{split}\int_{\Omega_1} (\mathcal{A}\nabla v^1)\cdot \nabla v^1 hdx &=\int_{\Omega_1} (\mathcal{A}^s\nabla v^1)\cdot \nabla v^1 hdx \\
&= \liminf_{j\rightarrow \infty}\int_{\Omega_1} (\mathcal{A}_{j,1})^s\nabla v_{j,1}\cdot \nabla v^1 hdx.
\end{split}\end{equation}
Next, by symmetry of the matrix (see (\ref{symineqvec})), we estimate for each $j$
\begin{equation}\begin{split}\nonumber\int_{\Omega_1} (\mathcal{A}_{j,1})^s\nabla v_{j,1}\cdot \nabla v^1 hdx \leq &\Bigl(\int_{\Omega_1} (\mathcal{A}_{j,1})^s\nabla v_{j,1}\cdot \nabla v_{j,1} h dx\Bigl)^{1/2} \\
&\cdot \Bigl(\int_{\Omega_1} (\mathcal{A}_{j,1})^s\nabla v^1\cdot \nabla v^1 h dx\Bigl)^{1/2}\end{split}\end{equation}
By taking the limit infimum of both sides, using the weak-$\star$ convergence of the convolution, one obtains (\ref{lowweaksemi}).  Here we are using the following elementary fact for two bounded sequences $(a_j)$ and $(b_j)$:
$$\text{If }\liminf_{j\rightarrow\infty}a_j = a\geq0 \text{ and }\lim_{j\rightarrow\infty}b_j = b\geq0, \text{ then }\liminf_{j\rightarrow\infty}a_j b_j \leq ab.$$
On the other hand, by standard properties of the convolution, and since $\lambda_j\rightarrow 1$:
$$\lambda_j\langle \sigma, \varphi_{\varepsilon_j}*h\rangle \rightarrow  \langle \sigma, h\rangle, \text{ as }j\rightarrow \infty.
$$
Keeping (\ref{vjapprox}) and (\ref{ajconv2}) in mind, we conclude that
\begin{equation}
-\text{div}(\mathcal{A}\nabla v^1) \geq (\mathcal{A}\nabla v^1)\cdot \nabla v^1 + \sigma \qquad \text{in }\mathcal{D}'(\Omega_1).
\end{equation}
For $k\geq 1$, and given the sequence $\{v_{j,k-1}\}$, a repetition of the above argument yields a subsequence $v_{j,k}$ of $v_{j,k-1}$ so that $v_{j,k}$ converges to $v^k\in L^{1,2}_{\text{loc}}(\Omega_k)$ with
\begin{equation}
-\text{div}(\mathcal{A}\nabla v^k) \geq (\mathcal{A}\nabla v^k)\cdot \nabla v^k  + \sigma \qquad \text{in }\mathcal{D}'(\Omega_k).
\end{equation}
Furthermore, as in Section \ref{existsubsec}, we may assert that $v^k = v^{k-1}$ in $\Omega_{k-1}$.  One can therefore define a function $v\in L^{1,2}_{\text{loc}}(\Omega)$ so that
$$-\text{div}(\mathcal{A}\nabla v) - (\mathcal{A}\nabla v)\cdot \nabla v \geq  \sigma \qquad \text{in }\mathcal{D}'(\Omega).
$$
To complete the proof, it suffices to let $\vec\Gamma = -\nabla v$.
\end{proof}

\section{The local regularity of solutions to the Schr\"{o}dinger equation with prescribed boundary values}\label{FNVsec}

The goal of this section is to apply the regularity techniques developed in this paper to the recent work of Frazier, Nazarov and Verbitsky \cite{FNV}.  The point here is to prove regularity of a given solution with prescribed boundary values, when we already know there exists a majorant of the given solution.

Let $\Omega\subset \textbf{R}^{n}$ be a bounded domain so that the boundary Harnack inequality is valid (for instance a Lipschitz, or more generally a NTA domain).
Let us now fix $x_0\in \Omega$, and let $G(x,x_0)$ be the Green's function for the Laplace operator relative to $\Omega$.  Then we define $m(x) = \min(1, G(x,x_0))$.  If $\Omega$ is a $C^{1,1}$ domain then $m(x)$ is pointwise comparable to $\text{dist}(x, \partial\Omega)$.

Let $\sigma$ be a locally finite Borel measure in $\Omega$.   Then the following theorem is proved:
\begin{thm}\label{FNV} \cite{FNV} Suppose that $\sigma$ satisfies the following embedding inequality
\begin{equation}\label{FNVemb}\int_{\Omega} h^2 d\sigma \leq \lambda \,  \int_{\Omega} |\nabla h|^2 dx, \text{ for all } h \in C^{\infty}_0(\Omega),
\end{equation}
with $0<\lambda<1$.  In addition, suppose that there is a constant $c>0$ so that
\begin{equation}\label{cond2}
\int_{\Omega} m(x)\exp\Bigl(\frac{c}{m(x)}\int_{\Omega} m(y) d\sigma(y)\Bigl) d\sigma(x)  <\infty.
\end{equation}
Then there is a solution $u_1$ of the equation
\begin{equation}\label{feykac}\begin{cases}
-\Delta u_1 = \sigma u_1 \text{ in } \Omega,\\
u_1 = 1 \text{ on }\partial \Omega.
\end{cases}\end{equation}
Conversely, if there is a solution of (\ref{feykac}), then (\ref{cond2}) holds for a positive constant $c=c(\Omega)$, and (\ref{FNVemb}) holds with $\lambda = 1$.
\end{thm}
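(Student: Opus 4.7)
The plan is to recast the equation (\ref{feykac}) as the integral fixed-point equation $u_1 = 1 + G_\sigma u_1$ with the Green potential $G_\sigma f(x) := \int_\Omega G(x,y) f(y) \, d\sigma(y)$. A polarization argument identifies (\ref{FNVemb}) with the operator bound $\|G_\sigma\|_{L^2(d\sigma) \to L^2(d\sigma)} \leq \lambda$, since for smooth compactly supported $f, g$ the bilinear form $\langle G_\sigma f, g\rangle_{L^2(d\sigma)}$ coincides with the Dirichlet pairing $\int \nabla(G_\sigma f) \cdot \nabla(G_\sigma g) \, dx$. In particular $\lambda < 1$ makes $G_\sigma$ a strict contraction on $L^2(d\sigma)$.

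For the existence direction I would iterate $u^{(k+1)} := 1 + G_\sigma u^{(k)}$ from $u^{(0)} = 1$, producing the pointwise monotone Neumann series $u^{(N)} = \sum_{k=0}^N G_\sigma^k 1$. Consecutive differences contract in $L^2(d\sigma)$ at rate $\lambda$, so the series converges there. The substantive issue is pointwise finiteness together with the boundary limit $u_1 \to 1$ on $\partial\Omega$, and here I would invoke a Khas'minskii-type induction based on the 3G inequality
\begin{equation*}
G(x,y) G(y,z) \leq c(\Omega) \, G(x,z) \left( \frac{m(y)}{m(x)} + \frac{m(y)}{m(z)} \right),
\end{equation*}
valid in NTA domains. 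Iterating gives a combinatorial estimate of the form $G_\sigma^k 1(x) \leq C^k m(x) (k!)^{-1} \bigl(\int_\Omega m\,d\sigma / m(x)\bigr)^k$ (modulo appropriate boundary weights), and summation produces a majorant of the shape $m(x) \exp\bigl(C m(x)^{-1} \int m\, d\sigma \bigr)$. Condition (\ref{cond2}) is exactly the sharp hypothesis ensuring this majorant is integrable against $d\sigma$, which via the integral equation $u_1 - 1 = G_\sigma u_1$ forces pointwise finiteness of $u_1$ together with the correct boundary decay.

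For the necessity direction, the integral equation forces $u_1 \geq 1$ pointwise, and iteration yields $u_1 \geq \sum_{k=0}^N G_\sigma^k 1$ for every $N$. Corresponding pointwise \emph{lower} bounds on the iterates, again from the 3G inequality applied in the reverse direction, reproduce the exponential expression in (\ref{cond2}) up to a constant $c = c(\Omega)$, so (\ref{cond2}) must be finite. For the form bound with $\lambda = 1$, I would apply an Allegretto--Piepenbrink-type argument to the positive supersolution $u_1$ of $-\Delta u = \sigma u$: granting $u_1 \in L^{1,2}_{\text{loc}}(\Omega)$ --- which is the content of the authors' Theorem \ref{u1reg} below --- the test function $\varphi = h^2/u_1$ for $h \in C^\infty_0(\Omega)$ lies in $L^{1,2}_c(\Omega)$ because $u_1 \geq 1$, and plugging it into the weak equation produces the logarithmic Caccioppoli identity
\begin{equation*}
\int_\Omega h^2 \, d\sigma = 2 \int_\Omega \frac{h}{u_1} \nabla u_1 \cdot \nabla h \, dx - \int_\Omega \frac{h^2}{u_1^2} |\nabla u_1|^2 \, dx,
\end{equation*}
on which a single application of Cauchy's inequality absorbs the cross term and leaves $\int h^2 \, d\sigma \leq \int |\nabla h|^2 \, dx$.

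The principal obstacle is extracting the combinatorial $1/k!$ gain in the iteration of the 3G inequality; a naive induction loses it and delivers only a geometric bound $C^k$, which is insufficient to conclude a finite sum. Obtaining the factorial requires exploiting the symmetry of $G$ and organizing the iteration as a Feynman--Kac functional $u_1(x) = E_x\bigl[\exp\bigl(\int_0^{\tau_\Omega} d\sigma(B_s)\bigr)\bigr]$ for Brownian motion with killing rate $d\sigma$, or equivalently via the sharp analytic form of Khas'minskii's lemma. This combinatorial gain is precisely what makes the \emph{exponential} integrability in (\ref{cond2}) the sharp condition, rather than a merely polynomial one.
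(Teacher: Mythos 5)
Theorem~\ref{FNV} is not proved in the present paper; it is quoted from the external reference \cite{FNV} (Frazier--Nazarov--Verbitsky), and the paper only proves the supplementary regularity statement Theorem~\ref{u1reg} for the solution constructed there. So there is no internal proof to compare against, but your sketch still has two concrete problems worth naming.

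First, there is a circularity in the necessity direction. You invoke Theorem~\ref{u1reg} to justify $u_1 \in L^{1,2}_{\text{loc}}(\Omega)$ so that $h^2/u_1$ is an admissible test function. But the proof of Theorem~\ref{u1reg} given in this paper explicitly assumes that (\ref{FNVemb}) holds with $0<\lambda<1$ (it uses this to invoke Lax--Milgram for the approximants $v_j$ and to control the Caccioppoli terms) and moreover that $u_1$ is the specific minimal solution constructed in the forward direction. In the converse direction of Theorem~\ref{FNV} you are given only that \emph{some} solution $u_1 \geq 1$ exists in $L^1(\Omega,m\,d\sigma)$, and neither $\lambda<1$ nor $L^{1,2}_{\text{loc}}$ regularity of $u_1$ is available; so the test-function computation cannot be carried out without further work (for instance, one can test against the truncations $h^2/\min(u_1,N)$ and use that $u_1$ is a positive superharmonic supersolution, or pass through $v_j$ defined intrinsically rather than via the constructed solution). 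As written, the step is unjustified.

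Second, your existence argument hands off the real work. You correctly identify that the geometric bound $C^k$ from a naive 3G iteration is insufficient and that a Khas'minskii-type or Feynman--Kac argument is required to extract the $1/k!$ gain, but you then declare this an ``obstacle'' without resolving it. The factorial gain is precisely the nontrivial content of \cite{FNV}; absent that estimate, (\ref{cond2}) does not imply convergence of the Neumann series, and the existence part is incomplete. Relatedly, the 3G-type inequality you write, $G(x,y)G(y,z) \leq c(\Omega)\,G(x,z)\bigl(m(y)/m(x) + m(y)/m(z)\bigr)$, is not the standard form; the form actually used in \cite{FNV} involves $G$ itself on the right-hand side and is what makes the combinatorics with the boundary weight $m$ close up. This should be stated precisely before iterating.
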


The solution constructed in Theorem~\ref{FNV} is  interpreted in the potential theoretic sense,
i.e., $u_1\in L^1(\Omega, m d \sigma)$, and
\begin{equation*}u_1(x)= \int_\Omega G(x,y)  \, u_1 (y) \, d \sigma(y) +1.
\end{equation*}
  If $\Omega$ is a bounded $C^{1,1}$ domain then $u_1\in L^1(\Omega, dx)\cap L^1(\Omega, m d \sigma)$, and is  a solution to (\ref{feykac})  in the very weak sense (see \cite{FNV}).

 Our primary result in this section is the following:
\begin{thm}\label{u1reg}  Consider the solution $u_1$ of (\ref{feykac}) constructed in Theorem \ref{FNV}, then
\begin{equation}
u_1\in L^{1,2}_{\text{loc}}(\Omega).
\end{equation}
\end{thm}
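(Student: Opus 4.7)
The plan is to adapt the approximation scheme of Section~\ref{linear} to the boundary value problem~(\ref{feykac}), replacing the normalization $\int_B u_j^2\,dx=1$ of (\ref{approxlinear}) by the boundary condition $u_j=1$, and using $u_1$ itself as an a priori upper envelope to anchor the uniform local $L^2$ bound.

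First I would introduce a subcritical approximating sequence: for $\lambda_j\uparrow 1$, let $w_j$ be the solution of
\begin{equation*}
-\Delta w_j=\lambda_j\sigma w_j \text{ in }\Omega,\qquad w_j-1\in L^{1,2}_0(\Omega),
\end{equation*}
produced by Theorem~\ref{FNV} applied to $\lambda_j\sigma$, which still obeys (\ref{FNVemb}) and (\ref{cond2}). The monotone iteration $w_j^{(0)}=1$, $w_j^{(N+1)}=1+\lambda_j\int_\Omega G(\cdot,y)w_j^{(N)}(y)\,d\sigma(y)$ used to construct $w_j$ in \cite{FNV}, together with an induction comparing it to the analogous iteration for $u_1$, gives $1\le w_j\le u_1$ pointwise and $w_j\uparrow u_1$ almost everywhere as $\lambda_j\uparrow 1$.

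Next I transfer the Section~\ref{linear} estimates to $\{w_j\}$. Since $\lambda_j\sigma$ is form bounded with bound $\lambda_j<1$ and is non-negative (so the lower form bound is automatic), Lemma~\ref{lemcacc} yields a uniform Caccioppoli inequality, while Lemma~\ref{lemlog} combined with $\sigma\ge 0$ gives the uniform logarithmic Caccioppoli estimate
\begin{equation*}
\int_\Omega \frac{|\nabla w_j|^2}{w_j^2}\,\psi^2\,dx \le C\int_\Omega|\nabla\psi|^2\,dx,\qquad \psi\in C^\infty_0(\Omega).
\end{equation*}
Poincar\'e then forces $\log w_j\in BMO_{\text{loc}}(\Omega)$ uniformly in $j$; Sobolev combined with the Caccioppoli inequality, as in (\ref{caccsob})--(\ref{l2est}), produces a uniform weak reverse H\"older inequality for $w_j^2$; and Proposition~\ref{locdoub} together with the self-improvement in Lemma~\ref{reverseholder} sharpen these into
\begin{equation*}
\Bigl(\dashint_{B(x,r)} w_j^2\,dy\Bigr)^{1/2}\le C(K)\dashint_{B(x,2r)} w_j\,dy,\qquad B(x,4r)\subset K\subset\subset\Omega,
\end{equation*}
with $C(K)$ independent of $j$.

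Finally I anchor the $L^2_{\text{loc}}$ bound using that $u_1\in L^1_{\text{loc}}(\Omega,dx)$: the Feynman--Kac representation $u_1=1+\int G(\cdot,y)u_1(y)\,d\sigma(y)$ together with $u_1\in L^1(\Omega,m\,d\sigma)$ and the standard boundary estimate $\int_K G(x,y)\,dx\le c(K)\,m(y)$ yield, via Fubini, $\int_K u_1\,dx<\infty$ for each compact $K\subset\Omega$. Since $w_j\le u_1$, the previous display is uniformly bounded on compact subsets, so $\{w_j\}$ is bounded in $L^2_{\text{loc}}(\Omega)$; the Caccioppoli inequality then promotes this to a uniform bound in $L^{1,2}_{\text{loc}}(\Omega)$, and weak lower semi-continuity together with $w_j\uparrow u_1$ delivers $u_1\in L^{1,2}_{\text{loc}}(\Omega)$. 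The main obstacle is the anchoring step: passing from the weighted integrability $u_1\in L^1(m\,d\sigma)$ naturally produced by Theorem~\ref{FNV} to the unweighted bound $u_1\in L^1_{\text{loc}}(dx)$. This requires the two-sided Green function estimate $G(x,y)\asymp m(y)$ for $x$ in a compact set and $y$ near $\partial\Omega$, and thus genuinely uses the boundary regularity hypothesis on $\Omega$ (e.g.\ NTA or Lipschitz).
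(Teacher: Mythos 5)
Your overall strategy — build an approximating sequence, prove uniform local gradient estimates, dominate by $u_1$, and pass to the limit — is the same as the paper's, but the first step has a gap that undermines the whole argument. You obtain the approximants $w_j$ by applying Theorem~\ref{FNV} to the \emph{scaled} potentials $\lambda_j\sigma$. This produces potential-theoretic solutions in $L^1(m\,d\sigma)$ only; nothing in Theorem~\ref{FNV} asserts $w_j-1\in L^{1,2}_0(\Omega)$, and establishing exactly that kind of Sobolev regularity is the point of Theorem~\ref{u1reg}. Scaling by $\lambda_j<1$ does not make $\sigma$ any more regular. Consequently you cannot test the weak formulation of the $w_j$-equation with functions built from $w_j$ itself, so Lemma~\ref{lemcacc}, the logarithmic Caccioppoli estimate, and the subsequent reverse-H\"older and BMO steps do not apply to $w_j$ a priori. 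The paper avoids this by \emph{truncating spatially rather than scaling}: it sets $v_j$ to be the Lax--Milgram solution with potential $\varphi_j\sigma$, where $\varphi_j\in C^\infty_0(\Omega)$; since $\varphi_j\sigma$ is compactly supported, it lies in $L^{-1,2}(\Omega)$ by Lemma~\ref{localsoblem}, so $v_j-1\in L^{1,2}_0(\Omega)$ is part of the construction and all the test-function arguments are legitimate. The domination $v_j\le u_1$ is then obtained from the monotonicity of the minimal Green function $\mathcal{G}^{\sigma}$ in $\sigma$, just as you use it.

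Two further remarks, less serious but worth noting. First, you invoke the full BMO/doubling machinery (log-Caccioppoli, Proposition~\ref{locdoub}) to get a reverse H\"older inequality of the form $\bigl(\dashint w_j^2\bigr)^{1/2}\lesssim \dashint w_j$; this is more than needed. Because here one already has the fixed superharmonic majorant $u_1$, the paper only needs Caccioppoli $+$ Sobolev to get the weak reverse H\"older inequality for $v_j^2$, then Lemma~\ref{reverseholder} to lower the exponent on the right to some $t<n/(n-2)$, and finally the weak Harnack inequality for the nonnegative superharmonic function $u_1$ to anchor $\bigl(\dashint u_1^t\bigr)^{1/t}\le C\inf u_1<\infty$. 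No BMO estimate is required. Second, your anchoring step via $u_1\in L^1_{\text{loc}}(dx)$ and the boundary estimate $\int_K G(x,y)\,dx\lesssim m(y)$ is a valid but roundabout route; since $\sigma\ge 0$, $u_1$ is superharmonic and finite, hence automatically $L^1_{\text{loc}}(dx)$, and the weak Harnack inequality gives the sharper pointwise control the paper uses. Also be careful at the end: after passing to the limit you need not merely a solution $v$ with $v\le u_1$, but to identify $v=u_1$. The paper does this by a Riesz-decomposition argument showing the greatest harmonic minorant of $v$ is $1$, then invoking minimality of $u_1$; your Neumann-series monotone convergence gives this identification for free in your setting, but it relies on the (gapful) construction of $w_j$.
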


\begin{proof}  Let $\Omega_j$ be a exhaustion of $\Omega$ by smooth domains.  Let $\varphi_j \in C^{\infty}_0(\Omega)$ be such that $\varphi_j \equiv 1$ on $\Omega_j$ and $0\leq \varphi_j \leq 1$.  Note that (as in Lemma \ref{localsoblem}) $\varphi_j \sigma \in L^{-1,2}(\Omega)$.  Since (\ref{FNVemb}) holds with constant $0<\lambda <1$, we follow a similar argument to Section \ref{approxlinesub}, using the Lax-Milgram lemma to obtain a unique $v_j\in L^{1,2}(\Omega)$ satisfying:
\begin{equation}\begin{cases}\label{FNVapprox}
-\Delta v_j = \varphi_j\sigma v_j \in \Omega,\\
\, v_j-1 \in \Lo(\Omega).
\end{cases}\end{equation}
Note here that $v_{j+1}\geq v_j$.

Let $B(x, 2r)\subset\subset\Omega$.  By repeating the proof of Lemma \ref{lemcacc}, we deduce that  there is a constant $C=C(n)$ so that
\begin{equation}\label{FNVgradest}
\int_{B(x,r)} |\nabla v_j|^2 \, dx \leq \frac{C}{r^2}\int_{B(x, 2r)} v_j^2 \,  dx.
\end{equation}
From (\ref{FNVgradest}), one asserts, as in displays (\ref{sobineq}) through (\ref{l2est}), that
\begin{equation}\label{FNVrh}\Bigl(\dashint_{B(x,r)}v_j^{2n/(n-2)} \, dx\Bigl)^{(n-2)/2} \leq C\dashint_{B(x,2r)} v_j^2 \, dx.
\end{equation}
Since (\ref{FNVrh}) holds for all balls $B(x,2r)\subset\subset \Omega$, we see that the hypothesis of Lemma \ref{reverseholder} are valid.  Applying the lemma with $t<n/(n-2)$, one finds a constant $C(t)>0$ so that
\begin{equation}\label{FNVrh1}\Bigl(\dashint_{B(x,r)}v_j^{2}dx\Bigl)^{1/t} \leq C(t)\Bigl(\dashint_{B(x,2r)} v_j^tdx\Bigl)^{1/t}.
\end{equation}
Combining (\ref{FNVrh1}) with (\ref{FNVgradest}), we conclude
\begin{equation}\label{FNVgradest1}
\int_{B(x,r)} |\nabla v_j|^2 dx \leq C(t)r^{n-2}\Bigl(\dashint_{B(x, 2r)} v_j^t \, dx\Bigl)^{1/t}.
\end{equation}
We now wish to show that $v_j\leq u_1$.  For a locally finite measure $\sigma$, denote by $\mathcal{G}^{\sigma}(x,y)$ the minimal Green's function of $-\Delta - \sigma$ (see \cite{FV1}), i.e. the minimal positive solution $u(\cdot,y)$ of the equation
$$-\Delta u(\cdot, y) - \sigma \, u ( \cdot ,y)=\delta_y, \quad y \in \Omega.
$$
Then $u_1 = 1 + \int_{\Omega}\mathcal{G}^{\sigma}(x,y)d\sigma(y)$, and $v_j =  1+ \int_{\Omega}\mathcal{G}^{\varphi_j \sigma}(x,y)\varphi_j(y) d\sigma(y)$ (recall $\varphi_j\sigma \in W^{-1,p'}(\Omega)$, so this representation coincides with the unique solution).  By construction $\mathcal{G}^{\sigma}$ is monotone in $\sigma$ (it can be represented by a Neumann series), and since $\varphi_j\sigma\leq \sigma$, it therefore follows that $v_j\leq u_1$, for each $j$.  Here $u_1$ is as defined in (\ref{feykac}).  Thus, from (\ref{FNVgradest1})
\begin{equation}\label{FNVgradest2}
\int_{B(x,r)} |\nabla v_j|^2 dx \leq Cr^{n-2}\Bigl( \frac{1}{|B(x, 2r)|}\int_{B(x, 2r)} u_1^t dx\Bigl)^{2/t}.
\end{equation}
Letting $t< n/(n-2)$, and recalling the weak Harnack inequality (valid since $\sigma \geq0$), we deduce the estimate:
\begin{equation}\label{FNVgradest3}
\int_{B(x,r)} |\nabla v_j|^2 \, dx \leq Cr^{n-2}\Bigl(\inf_{B(x,r)}u_1\Bigl)^2.
\end{equation}
Using (\ref{FNVgradest3}), we readily deduce that there exists $v\in L^{1,2}_{\text{loc}}(\Omega)$, so that $v_j$ increases to $v$, and $v_j \rightarrow v$ weakly in $L^{1,2}_{\text{loc}}$.  Furthermore, as in the proof of Proposition \ref{linexist}:
\begin{equation}\label{FNVdistsoln}-\Delta v = \sigma \, v \quad \text{ in } \mathcal{D}'(\Omega).
\end{equation}
Since $v_j \leq u_1$, it follows $v\leq u_1$.  On the other hand, we see that $v=1$ on $\partial\Omega$ in the potential theoretic sense.  Indeed, with $G(\mu)$ denoting the Green's function of the Laplacian relative to $\Omega$ applied to $\mu$, we see by (\ref{FNVdistsoln}) that
\begin{equation}\label{FNVabstract1}
v = G(\sigma v) + h,
\end{equation}
where $h$ is the greatest harmonic minorant of $v$.  Clearly, $h\geq 1$.  On the other hand, since $v\leq u_1$,
$$h-1 \leq v-1 \leq u_1-1 = G(\sigma u_1).
$$
Thus $h-1$ is a nonnegative harmonic minorant of $G(\sigma u_1)$.   However, by the Riesz decomposition theorem, the greatest harmonic minorant of $G(\sigma u_1)$ is zero.  Thus $h\equiv 1$, and $v=1$ on $\partial\Omega$ in the potential theoretic sense.

By minimality of $u_1$, it thus follows that $v=u_1$, and $u_1\in L^{1,2}_{\text{loc}}(\Omega)$.
\end{proof}

\section{Examples}\label{examples}

Our first result in this section completes our discussion of the condition (\ref{conj}), and that it does not provide a characterization of distributions satisfying (\ref{semibd}).

\begin{prop}\label{conjfalse}  Let $\Omega = \mathbf{R}^n$, and $\mathcal{A}$ be the $n\times n$ identity matrix.  Let $\sigma$ be the radial potential defined by:
$$ \sigma (x) = \cos r + \frac{n-1}{r}\sin r -\sin ^2 r, \quad r=|x|, \quad x \in  \mathbf{R}^n.
$$
Then $\sigma$ satisfies (\ref{semibd}), but cannot be represented in the form (\ref{conj}).
\end{prop}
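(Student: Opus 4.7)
The proof splits naturally into two parts. For (\ref{semibd}), the key observation is the identity $\sigma = \Delta(-\cos r) - |\nabla(-\cos r)|^2 = {\rm div}(\sin r\,\hat r) - \sin^2 r$; equivalently, $u = e^{\cos r}$ is a positive solution of $-\Delta u = \sigma u$, and $\cos r = \log u$ satisfies the Riccati equation. Testing against $h^2$ and integrating by parts yields the ground-state identity
\[
\langle \sigma, h^2\rangle = -2\int h\nabla h\cdot\sin r\,\hat r\,dx - \int h^2\sin^2 r\,dx = \int|\nabla h|^2\,dx - \int|\nabla h + h\sin r\,\hat r|^2\,dx \leq \int|\nabla h|^2\,dx,
\]
which establishes (\ref{semibd}).

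For the non-representability, I argue by contradiction: suppose (\ref{conj}) holds for some $\vec\Phi\in L^2_{\text{loc}}$ with $|\vec\Phi|$ a multiplier. Setting $\mu := {\rm div}\vec\Phi - \sigma \geq 0$, the multiplier hypothesis forces ${\rm div}\vec\Phi$ to be \emph{two-sided} form-bounded on $\mathbf{R}^n$ (by Cauchy–Schwarz applied to $\langle{\rm div}\vec\Phi,h^2\rangle = -2\int h\nabla h\cdot\vec\Phi$); hence $\sigma + \mu$ is form-bounded. Applying Theorem \ref{formbounded}(ii) to $\sigma+\mu$, we obtain $U \in {\rm BMO}(\mathbf{R}^n)$ with $\Delta U = \sigma+\mu$ and $|\nabla U|$ a multiplier. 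Setting $V := U + \cos r \in {\rm BMO}$ and using $\sigma + \sin^2 r = \Delta(-\cos r)$, we find $\Delta V = \mu - \sin^2 r$. Pushing the measure $\mu$ to the radial variable and averaging over spheres, the radial Laplacian identity $(r^{n-1}\bar V')' = d\tilde\mu/\sigma_{n-1} - r^{n-1}\sin^2 r\,dr$ integrates to $r^{n-1}\bar V'(r) = M(r) - \int_0^r s^{n-1}\sin^2 s\,ds$, where $M(r):=\mu(B_r)/\sigma_{n-1}$ is non-decreasing. Combining with the asymptotic $\int_0^r s^{n-1}\sin^2 s\,ds = r^n/(2n) - r^{n-1}\sin 2r/4 + O(r^{n-2})$ and $\bar U'(r) = \bar V'(r) + \sin r$ gives
\[
\bar U'(r) = \frac{M(r)}{r^{n-1}} - \frac{r}{2n} + \sin r + \frac{\sin 2r}{4} + O(1/r).
\]
Specializing the multiplier inequality $\int h^2 |\nabla U|^2\,dx \leq C\int|\nabla h|^2\,dx$ to radial test functions and using Jensen's inequality on spheres yields the radial bound $\int_R^{2R}\bar U'(r)^2 r^{n-1}\,dr \leq C R^{n-2}$ for all large $R$.

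The contradiction now comes from the monotonicity of $M$. Define $N(r) := r^n/(2n) - r^{n-1}\sin r - r^{n-1}\sin 2r/4$ and the residual $\epsilon(r) := (M(r) - N(r))/r^{n-1}$, so that $\bar U'(r) = \epsilon(r) + O(1/r)$. A direct calculation at $r_1 = 2k\pi - 1$, $r_2 = 2k\pi + 1$ gives
\[
N(r_2) - N(r_1) = r_1^{n-1}\bigl[1 - 2\sin 1 - \tfrac{\sin 2}{2}\bigr] + O(r_1^{n-2}) = -c\,r_1^{n-1} + O(r_1^{n-2}), \qquad c := 2\sin 1 + \tfrac{\sin 2}{2} - 1 > 0,
\]
so the inequality $M(r_2) - M(r_1) \geq 0$ forces $\epsilon(r_2) - \epsilon(r_1) \geq c + o(1)$ as $k\to\infty$. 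Over $[R, 2R]$ there are $\sim R/(2\pi)$ such disjoint pairs $(r_1^{(k)}, r_2^{(k)})$; converting each ``jump of size $c$'' into an $L^2$ lower bound $\int_{r_1^{(k)}}^{r_2^{(k)}}\epsilon^2\,dr \geq c^2/24$ (the linear profile being extremal for the Poincaré-type minimization with endpoint difference $c$) and summing yields $\int_R^{2R}\epsilon(r)^2\,r^{n-1}\,dr \gtrsim R^n$, contradicting the multiplier bound (which forces $\int \epsilon^2 r^{n-1}\,dr \leq C R^{n-2}$) for $R$ large. The main obstacle is making this conversion rigorous: since only the right-continuous point values $\epsilon(r_1^{(k)\pm})$ and $\epsilon(r_2^{(k)\pm})$ are constrained by the BV structure of $M$, one must either mollify and track quantitative errors, or exploit an additional regularity of $\bar U'$ inherited from $|\nabla U|^2$ being a form-bounded measure (which is known to force $\overline{|\nabla U|^2}$ to be locally $L^1$ with Morrey decay $O(R^{n-2})$ on annuli $[R,2R]$) to rule out concentration of $\epsilon$ on null sets.
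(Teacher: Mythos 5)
Your proof of the semi-boundedness direction is essentially identical to the paper's, which uses $\vec\Gamma = \frac{x}{r}\sin r$ and the ground-state substitution.

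For the non-representability direction, you take a genuinely different route. The paper first proves a clean one-dimensional model result on $\mathbf{R}_+$: if $\cos x - \sin^2 x = \Gamma' - \mu$ with $\mu \geq 0$, then $\Gamma$ is nondecreasing on $[2\pi k - \alpha_0, 2\pi k + \alpha_0]$ (precisely where $\cos x - \sin^2 x \geq 0$), and this local monotonicity converts the averaged decay $\int_a^{a+1}|\Gamma| = o(1)$ into a pointwise lower bound $\Gamma(\alpha_0/2 + 2\pi k) - \Gamma(2\pi k) \geq C > 0$, giving a contradiction. The multidimensional case is then reduced to this 1D model via spherical averaging and the auxiliary substitution $g(r) = \psi(r) - (n-1)\int_r^\infty \psi/s\,ds$. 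Your argument instead constructs $U = \Delta^{-1}(\sigma+\mu) \in {\rm BMO}$ directly from Theorem~\ref{formbounded}(ii), substitutes out the oscillating part via $V = U + \cos r$ (so $\Delta V = \mu - \sin^2 r$), and tracks the mass function $M(r) = \mu(B_r)/\sigma_{n-1}$ in a radial ODE. This is an elegant reorganization that buys a more self-contained radial analysis without the 1D detour, but both approaches crucially invoke the ${\rm BMO}$ representation of Theorem~\ref{formbounded}.

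The gap you flag at the end is real and must be closed. The constraint $\epsilon(r_2) - \epsilon(r_1) \geq c$ is a constraint on two \emph{point values} of a BV function, and it does not by itself imply $\int_{r_1}^{r_2}\epsilon^2 \geq c^2/24$: for instance, $M$ could jump at $r_2^-$ while $\epsilon$ stays near zero on a subset of $(r_1,r_2)$ of full measure, so the linear-interpolation extremizer you invoke is not admissible in the BV class. The fix is to stop using point values and use the monotonicity of $M$ in averaged form: for any $\eta \in (0,1)$ fixed small, $M(r) \geq M(r')$ whenever $r \in [r_2-\eta,r_2]$ and $r' \in [r_1,r_1+\eta]$ gives, upon averaging in $r$ and $r'$ and applying Cauchy--Schwarz,
\begin{equation*}
\frac{2 r_2^{n-1}}{\sqrt\eta}\,\| \epsilon \|_{L^2(r_1,r_2)} \;\geq\; \frac{1}{\eta}\int_{r_1}^{r_1+\eta} N\,dr' - \frac{1}{\eta}\int_{r_2-\eta}^{r_2} N\,dr \;\geq\; \bigl(c - O(\eta)\bigr)\,r_1^{n-1}\bigl(1 + O(1/r_1)\bigr),
\end{equation*}
which yields $\|\epsilon\|_{L^2(r_1,r_2)}^2 \gtrsim c^2\eta$ for large $k$. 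Summing over the $\sim R/(2\pi)$ disjoint pairs in $[R,2R]$ then gives $\int_R^{2R}\epsilon^2\,r^{n-1}\,dr \gtrsim R^n$, contradicting the multiplier bound $\leq CR^{n-2}$. Note also that passing from $\epsilon(r_2)r_2^{n-1} - \epsilon(r_1)r_1^{n-1} \geq cr_1^{n-1}$ to $\epsilon(r_2) - \epsilon(r_1) \geq c + o(1)$ as you wrote requires an a priori bound $\epsilon(r_2) = o(r_1)$ to absorb the $\epsilon(r_2)\bigl[(r_2/r_1)^{n-1}-1\bigr]$ term; the averaging argument above bypasses this issue as well. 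With these repairs your route goes through, but as written the proof is incomplete precisely at the step you identified.
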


\begin{proof}We first consider the case $n=1$ and $\Omega=\mathbf{R}_+=(0, +\infty)$. Note that a criterion of form boundedness takes the form \cite{MV1}, \cite{MV2}:  $\sigma= \Gamma'$ where
\begin{equation}\label{mv1}
\int_{a}^\infty | \Gamma|^2 dx \le \frac{C}{a}, \qquad a>0.
\end{equation}
Let $\sigma =\cos x - \sin^2 x$. Then $\sigma$ is semibounded by (\ref{semibdcond}), i.e.,
$$
\int_{\mathbf{R}_+}  h^2 \, \sigma \,  dx  \le \int_{\mathbf{R}_+} | h' |^2\,  dx, \quad h \in C^\infty_0(\mathbf{R}_+),
$$
but $\sigma$ cannot be represented in the form
\begin{equation}\label{form}
\sigma = \Gamma' - \mu, \quad \mu \ge 0,
\end{equation}
where $\Gamma$ satisfies (\ref{mv1}) with any $C>0$.

In fact, even a weaker condition
\begin{equation}\label{weak}
\int_a^{a+1} |\Gamma (x)|\, dx = o(1) \quad as \quad a \to +\infty
\end{equation}
cannot be satisfied if $\sigma$ is of the form (\ref{form}).

Indeed, suppose
\begin{equation}\label{hyp}
\cos x - \sin^2 x = \Gamma' -  \mu, \quad \mu \ge 0,
\end{equation}
Then
$$
\Gamma (x) = \sin x \, \left(1 + \frac 1 2 \cos x\right) - \frac {x}{2} + \varphi(x),
$$
where $\varphi (x)$ is nondecreasing on $\mathbf{R}_+$.

Let $\alpha_0= \arccos \frac{\sqrt 5-1}{2} \approx .904<1$ so that $\cos \alpha - \sin^2 \alpha \ge  0$
for $-\alpha_0\leq \alpha \leq \alpha_0$, and consequently  $\Gamma$ is nondecreasing in the interval $[2 \pi n - \alpha_0, \, 2 \pi n + \alpha_0]$. Hence, for $a = \frac {\alpha_0}{2}$, it follows
that
\begin{equation}\begin{split}\nonumber
\Gamma (a + 2 \pi n) &- \Gamma (2 \pi n) \\
&=  \sin a\, \left(1 + \frac 1 2 \cos a \right) - \frac {a}{2} + \varphi(a + 2 \pi n) - \varphi(2\pi n )\\
& \ge  \sin a\, \left(1 + \frac 1 2 \cos a \right) - \frac {a}{2} =C,
\end{split}\end{equation}
where $C$ is independent of $n$.  Here $C>0$ since  $\sin x\, \left(1 + \frac 1 2 \cos x \right) - \frac {x}{2}$ is increasing on $(-\alpha_0, \alpha_0)$ and  equals zero at the origin.

On the other hand, for  $\alpha =  a + 2 \pi n$ we have
$$
\Gamma (\alpha) \le \frac 2 {\alpha_0} \int_\alpha^{\alpha + a} \Gamma (x) \, dx  \le \frac 2 {\alpha_0} \int_\alpha^{\alpha +1} |\Gamma (x)| \, dx.
$$
Similarly, $
\Gamma (2 \pi n) \ge  \frac 2 \alpha_0 \int_{-a + 2 \pi n}^{2 \pi n} \Gamma (x) \, dx. $
Hence,
$$
\Gamma (a + 2 \pi n) -\Gamma (2 \pi n) \le  \frac 2 {\alpha_0} \int_{a + 2 \pi n}^{a + 2 \pi n +1} |\Gamma (x)| \, dx
+ \frac 2 {\alpha_0} \int_{-a + 2 \pi n}^{-a +2 \pi n +1} |\Gamma (x)| \, dx.
$$
By  (\ref{weak}) the right-hand side of the preceding inequality tends to zero as  $n \to +\infty$.
This  contradicts the estimate
$$
\Gamma (a + 2 \pi n) - \Gamma (2 \pi n) \ge C>0
$$
obtained above.

We now are in a position to consider the multi-dimensional case $\Omega=  \mathbf{R}^n$, $n \geq 3$. Let
\begin{equation}\label{ex}
\sigma = \cos r + \frac{n-1}{r} \sin r - \sin^2 r, \quad r =|x|, \quad x\in \mathbf{R}^n.
\end{equation}
Then by (\ref{semibdcond}) with $\Gamma = \frac{x}{r} \sin r$ it follows that
(\ref{semibd}) holds.

Note that $\frac{\sin r}{r}$ satisfies the inequality
\begin{equation}\label{by-parts}
\left \vert \int_{\mathbf{R}^n} h^2 \, \frac{\sin r}{r} \, dx \right \vert
\le C \, ||\nabla h||^2_{L^2(\mathbf{R}^n)}, \quad h \in C^\infty_0(\mathbf{R}^n).
\end{equation}
Indeed, using polar coordinates and integration by parts, we obtain
\begin{equation}\begin{split}
\Bigl| \int_{S^{n-1}}\int_{\mathbf{R}_+} & h(r \xi)^2 \,  \frac{\sin r}{r} \, r^{n-1} dr d \xi \Bigl|\\
&\leq  \Bigl| \int_{S^{n-1}}\int_{\mathbf{R}_+} 2h(r \xi) \, \nabla h(r \xi) \cdot \xi  \, \frac{\cos r}{r} \, r^{n-1} \, dr d \xi \Bigl|\\
&+(n-2) \left \vert\int_{S^{n-1}}\int_{\mathbf{R}_+} \cos r \, h^2(r \xi)  \, r^{n-3} \, dr d \xi \right \vert \\
\end{split}\end{equation}
Applying Cauchy's inequality and Hardy's inequality, we estimate the last line by
\begin{equation}\begin{split}\nonumber
 2 \Bigl(\int_{\mathbf{R}^n} \frac{h(x)^2}{r^2} \, dx \Bigl)^{\frac 1 2} &||\nabla h||_{L^2(\mathbf{R}^n)} + (n-2) \int_{\mathbf{R}^n} \frac{h(x)^2}{r^2}  dx\\
&\le C \, ||\nabla h||^2_{L^2(\mathbf{R}^n)},
\end{split}\end{equation}
and hence  (\ref{by-parts}) holds.

It remains to show that  $\sigma_1 =  \cos r - \sin^2 r$
does not satisfy the inequality $
\sigma_1 \le \rm{div} \, \Gamma,$
for any $\Gamma$ which obeys (\ref{conj}).  This is equivalent (see
\cite{MV1}),  to finding $\Gamma$ with
$\rm{div} \, \Gamma$ is form bounded. We note that
as was proved in \cite{MV1}, we may always pick
$\Gamma = \nabla \Phi$ where $\rm{div} \, \Gamma = \Delta \Phi$,
 and
\begin{equation}\label{phi}
\int_{\mathbf{R}^n}h^2 \, |\nabla \Phi |^2 \,  dx
\le C \, || \nabla h ||^2_{L^2(\mathbf{R}^n)}, \text{ for all }
 h \in C^\infty_0(\mathbf{R}^n),
\end{equation}
for some $C$ independent of $h$.   Suppose now that
\begin{equation}\label{rad}
\sigma_1 \le \Delta \Phi,
\end{equation}
where $\Phi$ satisfies (\ref{phi}). Since $\sigma_1$ is radially symmetric, it follows by using the average of $\Phi$ over the
unit sphere that (\ref{rad}) holds with a radially symmetric
 $\Phi_0(x) = \varphi(r)$ so that
$$
\sigma_1 (r) \le \varphi''(r) + \frac{n-1}{r} \varphi'(r), \quad r>0.
$$
Moreover,
$$\varphi'(r)=\frac{1}{|S^{n-1}|} \int_{S^{n-1}} \sum_{k=1}^n
\frac{\partial \Phi}{\partial x_k} (r \xi_k) \, \xi_k \, d \xi,
$$
and hence
$$
|\varphi'(r)|^2 \le \frac{1}{|S^{n-1}|} \int_{S^{n-1}} |\nabla \Phi(r \xi)|^2 \, d \xi.
$$
From this and (\ref{phi}) with  a radially symmetric test function $h$, we deduce
\begin{equation}\begin{split}\nonumber
  \int_{\mathbf{R}_+} h^2(r) \, |\varphi'(r)|^2 \, r^{n-1} dr &\le \frac{1}{|S^{n-1}|}  \int_{S^{n-1}} \int_{\mathbf{R}_+}  |\nabla \Phi(r \xi)|^2 \, h^2(r) \, r^{n-1} dr d \xi\\
 &\le C \,
 \int_{\mathbf{R}_+} |h'(r)|^2 \, r^{n-1} \, dr.
 \end{split}\end{equation}
Consequently,
$$
\int_0^a  |\varphi'(r)|^2 \, r^{n-1} \, dr \le C \, a^{n-2}, \quad a>0.
$$
We now let $\psi (r) = \varphi' (r)$. It follows from the preceding estimate that
$$
\int_a^{a+1} | \psi (r)| \, dr \le C \, a^{-\frac 1 2},
\quad a>0.
$$
This implies
$$
\int_a^{+\infty} \frac{| \psi(r)|}{r} dr \le C \, a^{-\frac 1 2},
\quad a>0.
$$
Next, denote by $g(r)$ the function:
$$g(r) = \psi(r) - (n-1) \int_r^{+\infty} \frac{\psi(r)}{r} dr.
$$
It is easy to see that $g$ satisfies the same inequality as $\psi$:
$$
\int_a^{a+1} | g (r)| \, dr \le C \, a^{-\frac 1 2},
\quad a>0.
$$
Furthermore,
$$
g'(r) =  \psi' (r) + \frac{n-1}{r} \, \psi(r) = \varphi'' (r) + \frac{n-1}{r} \, \varphi(r)= \Delta \Phi (x).
$$
Thus, $
\sigma_1 (r) = \cos r - \sin^2 r \le g'(r), \quad r>0,$
where $g$ satisfies the condition
$$
\int_a^{a+1} | g (r)| \, dr = o(1),
\quad a\to +\infty.
$$
Hence, by the one-dimensional example on $\mathbf{R}_+$ considered above with $g=\Gamma$, we arrive at a contradiction.
\end{proof}

In the following example we consider non-symmetric operators, with the aim to show that the non-symmetric part can effect the constant appearing in the form bound.  In Theorem \ref{gensymthm}, it was shown that when one has a solution to the Schr\"{o}dinger equation (\ref{schrointro}), or Riccati equation (\ref{ricintro}), whose operator $\mathcal{A}$ is non-symmetric, then $\sigma$ satisfies (\ref{introupper}) with $\lambda$ depending on the ellipticity constants (\ref{elliptic}).  This example shows that such a conclusion is not artificial:
\begin{exa}\label{nonsym}  Let $n=3$, and suppose $\mathcal{A} = I+B$, where $B$ has zero entries except for $b_{1,2} = Ca(x_1)$ for a constant $C$, and $a(x_1)$ a Lipschitz continuous function.  Suppose in addition $b_{2,1}=-b_{1,2}$.
If $u(x) = 1+ |x|^2$, then $u$ solves
$$-\text{div}(\mathcal{A}\nabla u) = \sigma u(x), \text{   with    }\sigma = \frac{- 6 + 2x_2 C a'(x_1)}{1+ |x|^2}
$$
On the other hand
$$\mathcal{A}(\xi)\cdot \xi = |\xi|^2.
$$
It follows that in the case of non-symmetric matrices $\mathcal{A}$, the constant in statement (iii) of Theorem \ref{gensymthm} depends on the constant $C$, and hence the operator $\mathcal{A}$. \end{exa}

The next example (which is  well known) demonstrates the sharpness of our primary theorem for Schr\"{o}dinger type equations.  In particular we confirm the assertions made in the introduction.

\begin{exa}Consider positive solutions $u$ of
\begin{equation}\begin{cases}\label{quadratic}
-\Delta u = \frac{c}{|x|^2} u \text{ in }\mathbf{R}^n,\\
\,\inf_{\mathbf{R}^n} u = 0,
\end{cases}\end{equation}
with $c\leq (n-2)^2/4$.  It is well known that (\ref{quadratic}) has positive solutions (up to constant multiple) of the form $u_{\pm}(x) =  |x|^{\alpha_{\pm}}$, where
\begin{equation}\label{defalpha}\alpha_{\pm} = \frac{2-n}{2} \pm \frac{1}{2} \sqrt{ (n-2)^2 - 4c}.
\end{equation}
If $c<(n-2)^2/4$, then by Hardy's inequality it follows that (\ref{introupper}) holds with $0<\lambda<1$.  For $0<c<(n-2)^2/4$ we see that by choosing $\alpha_+$, there exists a solution $u_+ \in L^{1,2}_{\text{loc}}(\Omega)$ of (\ref{quadratic}).   Taking $c$ arbitrarily close to $(n-2)^2/4$, we see that the existence of a solution $u_+\in L^{1,2}_{\text{loc}}(\Omega)$ of (\ref{quadratic}) is the optimal local regularity.   The same example shows that solutions need not be locally bounded, and therefore positive solutions of (\ref{quadratic}) do not satisfy the Harnack inequality.

Choosing $\alpha_{-}$ in (\ref{defalpha}), it follows that there exist positive solutions $u_-\in L^{1,1}_{\text{loc}}(\mathbf{R}^n)$ of (\ref{quadratic}), which are not in $L^{1,2}_{\text{loc}}(\mathbf{R}^n)$.

Finally, let  $c=(n-2)^2/4$ in (\ref{quadratic}). The resulting unique positive solution does not lie in $L^{1,2}_{\text{loc}}(\mathbf{R}^n)$.  This latter point shows the assumption $0<\lambda<1$ in (\ref{introupper}) is necessary in order to prove statement ${\rm(i)}$ of Theorem \ref{gensymthm}.  We remark that the uniqueness of the positive superharmonic solution in this case is known even for quasilinear generalizations of (\ref{quadratic}), see \cite{PS05}.
\end{exa}

\end{document}